\documentclass[11pt,reqno]{article}
\usepackage[margin=1in]{geometry}
\usepackage{dsfont, amssymb,amsmath,amscd,latexsym, amsthm, amsxtra,amsfonts}
\usepackage{lineno}
\usepackage[all]{xy}
\usepackage[active]{srcltx}
\usepackage{tikz}
\usepackage{bbm}
\usepackage{enumerate}
\usepackage{mathrsfs}
\usepackage{graphicx}
\usepackage{subcaption}
\usepackage{comment}
\usepackage{mathtools}
\usepackage{cases}
\usepackage{tcolorbox}
\tcbuselibrary{most}

\usetikzlibrary{calc,arrows}
\usepackage{verbatim}
\usepackage{color}
\usepackage{epstopdf}
\usepackage[affil-it]{authblk}
\usepackage{bm}
\usepackage[title]{appendix}

\newtheorem{theorem}{Theorem}[section]

\newtheorem{condition}[theorem]{Condition}

\newtheorem{lemma}[theorem]{Lemma}

\newtheorem{proposition}[theorem]{Proposition}
\newtheorem{remark}[theorem]{Remark}
\numberwithin{equation}{section}

\newtheorem{definition}{Definition}
\newtheorem{assumption}{Assumption}
\newtheorem{hypothesis}{Hypothesis}
\newtheorem{problem}{Problem}

\newcommand{\md}{\mathrm{d}}

\newcommand{\mR}{\mathbb{R}}

\newcommand{\mE}{\mathbb{E}}

\newcommand{\mP}{\mathbb{P}}

\newcommand{\ind}{\mathbbm{1}}

\renewcommand{\epsilon}{\varepsilon}

\newcommand{\B}{\mathcal{B}}

 \usepackage[pdfstartview=FitH, bookmarksnumbered=true,bookmarksopen=true, colorlinks=true, pdfborder={0 0 1}, citecolor=blue, linkcolor=blue,urlcolor=blue]{hyperref}
\usepackage{graphics}
\graphicspath{{figures/}}

\title{On Convergence Rates of General $N$-Player Stackelberg Games to their Mean Field Limits}

\usepackage{authblk}

\author[a]{Alain Bensoussan\footnote{E-mail:axb046100@utdallas.edu}}

\author[b]{Ziyu Huang\footnote{E-mail: zyhuang19@fudan.edu.cn}}
\author[c]{Sheng Wang\footnote{E-mail: sheng-wa15@tsinghua.org.cn}}
\author[b]{Sheung Chi Phillip Yam\footnote{E-mail: scpyam@sta.cuhk.edu.hk}}

\affil[a]{\small \it International Center for Decision and Risk Analysis, Naveen Jindal School of Management, University of Texas at Dallas, Dallas, Texas, USA.}
\affil[b]{\small \it Department of Statistics and Data Science, The Chinese University of Hong Kong, Shatin, N.T., Hong Kong SAR}
\affil[c]{\small \it Department of Statistics and Actuarial Science, School of Computing and Data Science, The University of Hong Kong, Pokfulam Road, Hong Kong}

\newcommand{\F}{\mathcal{F}}

\renewcommand{\P}{\mathbb{P}}

\newcommand{\RR}{\mathbb{R}}

\newcommand{\f}{\mathscr{F}}
\newcommand{\lr}{\mathcal{L}}
\newcommand{\sr}{\mathcal{S}}
\newcommand{\e}{\mathbb{E}}
\newcommand{\pr}{\mathcal{P}}

\newcommand{\brn}{{\mathbb{R}^n}}

\newcommand{\brd}{{\mathbb{R}^d}}
\newcommand{\de}{\Delta}

\allowdisplaybreaks[2]

\begin{document}
	\maketitle

\begin{abstract}

In this article, we establish precise convergence rates of a general class of $N$-Player Stackelberg games to their mean field limits, which allows the response time delay of information, empirical distribution based interactions, and the control-dependent diffusion coefficients.  All these features makes our problem nonstandard, barely been touched in the literature, and they complicate the analysis and therefore reduce the convergence rate. We first justify the same convergence rate for both the followers and the leader.
Specifically, for the most general case, the convergence rate is shown to be $\mathcal{O}\left(N^{-\frac{2(q-2)}{n_1(3q-4)}}\right)$ when $n_1>4$ where $n_1$ is the dimension of the follower's state, and $q$ is the order of the integration of the initial;  and this rate has yet been shown in the literature, to the best of our knowledge. Moreover, by classifying cases according to the state dimension $n_1$, the nature of the delay, and the assumptions of the coefficients, we provide several subcases where faster convergence rates can be obtained; for instance the $\mathcal{O}\left(N^{-\frac{2}{3n_1}}\right)$-convergence when the diffusion coefficients are independent of control variable. Our result extends the standard $o(1)$-convergence result for the mean field Stackelberg games in the literature, together with the $\mathcal{O}(N^{-\frac{1}{n_1+4}})$-convergence for the mean field games with major and minor players. We also discuss the special case where our coefficients are linear in distribution argument while nonlinear in state and control arguments, and we establish an $\mathcal{O}(1/\sqrt{N})$ convergence rate, which extends the linear quadratic cases in the literature. \\

\noindent{\textbf{Keywords:}} $N$-player Stackelberg games; Mean field limit; 
Response time delay; 
Control-dependent diffusion; Mixture-convexity of Wasserstein metric; {Immersion of filtration}

\noindent {\bf Mathematics Subject Classification (2020):} 91A65; 91A06; 91A15

\end{abstract}

\section{Introduction}
Game problems involving a large number of players have been extensively studied in recent decades, see \cite{MR1783877,MR2449100,MR2629529} for instance. In symmetric, non-cooperative stochastic differential games with $N$ interacting players, each player solves a control problem in which both the cost functional and the dynamics depend not only on their own state but also on the states of the other players.
Solving for an exact Nash equilibrium of an $N$-player game is impractical when $N$ is large due to the curse of dimensionality. Instead, one can take the limit as $N\to\infty$ and consider the limiting problem to give an approximate Nash equilibrium.
This limiting problem is the well-known as mean field game (MFG), which were first introduced by Lasry and Lions in a series of articles \cite{JM1,JM2} and also independently by Huang, Caines and Malham\'e \cite{huang2003individual,HM1}.
For comprehensive studies on MFGs, 
we refer to \cite{AB_book,GDA,GM} for the HJB-FP approach, and to \cite{CP1,GW} for master equation approach, and to \cite{SA1,AB11,carmona2018probabilistic,Moon-Basar} for the probabilistic approach. Convergence results from the $N$-player game to the MFG limit can be found, for example, in \cite{CDLL,MR4522347,Carmona-Zhu,guo2022optimization,han2024gradient,Huang-Tang,Nourian-Caines}. 

Heinrich von Stackelberg \cite{stackelberg1934marktform} introduced a hierarchical game equilibrium  notion in 1934 for markets with a leader and a follower, where in a two-person nonzero-sum game the follower chooses an optimal strategy in response to the leader’s policy, and the leader, anticipating this reaction, announces policies that optimize his own targeted planning. This Stackelberg equilibrium notion  were then extended to more general settings, see \cite{bacsar2010differential,bensoussan2013linear,MR965048,simaan1973stackelberg} for instance. One of an important kind of Stackelberg games is to consider a system consisting of one leader and $N$ followers, where the individuals can also gather information through the interactions with the community. Given any action of the leader and the information of the community, each follower picks up his own optimal strategy. A Stackelberg Nash equilibrium is a set of strategies, with the strategies for the $N$ followers constituting a Nash equilibrium (which can be viewed as a function of the leader's strategy), and the strategy for the leader is the optimal. When $N$ goes to the infinity, the limiting problem is called the mean field Stackelberg game. The mean field Stackelberg game can be viewed as an optimal control problem nested with fixed point problems, which makes it more complicated than traditional Stackelberg game or MFG.  Solving it typically consists of two steps: (1) given the control $v_0$ of the leader, solve an MFG parameterized by $v_0$, whose fixed point is denoted by $z[v_0]$; (2) search for an optimal $v_0$. For the study on the mean field Stackelberg game, we refer to \cite{bensoussan2016mean,Bensoussan-Chau-Yam,Lin-Jiang-Zhang} for the linear quadratic setting for instance. 
The mean field Stackelberg game is different from the MFG with one major and many minor players studied in \cite{Carmona-Zhu,MR2599921,Huang-Tang,Nourian-Caines}. In a MFG with major and minor players, although the major player strongly influences the minors, all players (including the major player) determine their optimal strategies simultaneously, and this limitation narrows its potential applicability in economics and finance, since it is evident that most governors, while not all-powerful, possess some authority to override and steer the future course of the entire community.
Motivated by the latter consideration, \cite{bensoussan2016mean} proposed a substantially different general framework, the MFGs in the presence of a ``dominating player" (also called the ``leader"). Compared with the community of minor players in the MFG, the nature of the dominating is clear in the sense that changes in the behavior of this dominating player would immediately and directly affect both the perception of all followers and the aggregated public information through the evolution of the mean field term, being summarized community data analytics of the whole population. Mathematically, the optimal controls of the followers and the mean field term are both functionals of the dynamics of the leader. That is, given the mean field term and the policies set by the dominating player, we first solve the optimal control for the representative player, and then, by regarding the optimized mean field term as a functional of the dominating player, we next proceed to solve for the dominating player’s optimal control. In summary, the objective is to  approximate the hierarchical equilibrium notion originally introduced in \cite{stackelberg1934marktform} when the number of the followers is large.

In this article, we aim to establish the convergence rate of $N$-Player Stackelberg games to their mean field limits. We consider a general class of mean field Stackelberg games, allowing for the following features: (i)  the information received by the followers have various magnitudes of {\it response time delay} (delay for short); (ii)  the followers interact with one another through the empirical distribution (rather than merely the mean of the states); (iii) the drift and diffusion coefficients of the state processes for both the leader and the followers may depend on the state, the control, and the distribution. 
For (i), in practice, due to heterogeneous technological advancements among agents, it is natural to assume that individuals respond to policy changes with varying magnitudes of delay. Each follower is fully aware of their own delay time but has an incentive to conceal this information from others. Hence, we model the exact delay time of any individual as a hidden random variable $\Delta$, unknown to all other followers including the leader. These hidden random variables $\{\Delta_j,\ 1\le j\le N\}$ ($j$ stands for the $j$-th follower) share a common distribution $\pi_\Delta$, which is known to both the leader and the followers. The introduction of delay complicates the analysis of the convergence rate, since the empirical distribution involves followers with different delays. To address this, we utilize certain estimates of the Wasserstein metric between probability measures obtained via convex  combinations (mixture distribution); see Lemmas~\ref{lma:convex:wa} and \ref{lma:pq:mu}. On deriving the convergence rate, we first establish results for discrete $\Delta$, and by then extend them to the general case via a discretization method. We refer  to \cite{MR3691722,Bensoussan-Chau-Yam} for the earliest studies on stochastic Stackelberg differential games with delay, but with constant diffusion or linear-quadratic settings. 
For (ii), we generalize the model of \cite{Bensoussan-Chau-Yam} by replacing the variable of the mean of the followers’ states by their whole empirical distribution functional. This transforms the problem from estimating the metric between variables in $\mathcal{L}^2$ spaces to estimating the metric between probability measures in Wasserstein spaces. Consequently, our analysis relies heavily on convergence results for the Wasserstein metric of empirical distributions to the corresponding conditional distribution laws; see Section~\ref{sec:convergence} for details. 
For (iii), allowing general diffusion coefficients introduces further challenges. In particular, dependence of the diffusion coefficient on the control variable reduces the convergence rate; see Remark~\ref{prop:holdercontinuity}. Specifically, the rate in the control-dependent case 
is the root of that under the control-independent case.

Our main result shows that the solution of the limiting mean field Stackelberg game provides an approximate Stackelberg Nash equilibrium.  The convergence rate is  $\mathcal{O}\left(N^{\frac{-(q-2)}{2(3q-4)}}\right)$ if $n_1<4$ ($n_1$ is the dimension of the follower's state, and $q>4$ is a constant for the assumptions on the integrability of the initial condition in Assumption~\ref{assumption:initial} (i)), or it is $\mathcal{O}\left(\left(\frac{\log(N)}{\sqrt{N}}\right)^{\frac{q-2}{3q-4}}\right)$ if $n_1=4$, or it is $\mathcal{O}\left(N^{\frac{-2(q-2)}{n_1(3q-4)}}\right)$ if $n_1>4$. This result has two components: (i) an approximate Nash equilibrium for the followers given any arbitrary strategy of the leader $v_0$, and (ii) the approximate optimality for the leader. Together, these two components constitute the approximate Stackelberg Nash equilibrium. 
As a particular case, when the diffusion coefficient of the leader is independent of the control variable, then the convergence rate increase to be $\mathcal{O}\left(N^{-1/6}\right)$ if $n_1<4$, or $\mathcal{O}\left(\left(\frac{\log(N)}{\sqrt{N}}\right)^{\frac{1}{3}}\right)$ if $n_1=4$, or $\mathcal{O}\left(N^{-2/3n_1}\right)$ if $n_1>4$. As a another case, when the delay $\Delta$ is discretely distributed, the convergence rate increases to   $\mathcal{O}\left(N^{-1/4}\right)$ if $n_1<4$, or to $\mathcal{O}\left(N^{-1/4}\log(N)\right)$ if $n_1=4$, or to $\mathcal{O}\left(N^{-1/n_1}\right)$ if $n_1>4$.  
We emphasize that the condition $q>4$ is required only for obtaining explicit convergence rates, but not for establishing convergence itself. Indeed, convergence holds already under the weaker assumption $q=2$; see \cite{Bensoussan-Chau-Yam} for a proof in the case of constant diffusion. The requirement $q>4$ stems from the use of Lemma~\ref{lemma:wasserstien} (also see \cite[Theorem 5.8]{carmona2018probabilistic}), which provides explicit bounds on the convergence rate of empirical measures for independent and identically distributed random variables. 
 Regarding the comparison with existing results, in \cite[Theorem 7.1]{Carmona-Zhu} the authors study conditional propagation of chaos and obtain an $ \mathcal{O}(N^{-1/(n_1+4)})$ convergence rate for a system of $(N+1)$ interacting particles and the associated conditional McKean–Vlasov stochastic differential equations (SDEs). In \cite{huang2025nonlinear}, the authors provide an $o(1)$ convergence for the nonlinear mean field Stackelberg game with constant diffusion. In \cite[Theorem 3]{dayanikli2024machine}, the authors also provided an $ \mathcal{O}(N^{-1/n_1})$ convergence rate (when $n_1>4$) for $N$-player Stackelberg mean field game via a penalization approach without time delay while diffusion for the leader is a constant. In our setting, the presence of time delay together with control dependence significantly complicates the analysis and reduces the rate of convergence.  
 In \cite[Theorem 7.2]{Nourian-Caines}, an $\mathcal{O}(1/\sqrt{N})$ convergence rate is obtained for MFGs with one major and many minor players, under the assumptions that the minors’ state processes are independent of the major’s state and that the drift, diffusion, and cost coefficients depend linearly on the distribution; that is, for a function $\phi:\mathbb{R}^n\times \mathcal{P}_2(\mathbb{R}^n)\times \mathbb{R}^d\to\mathbb{R}^n$,
\begin{align}\label{intr_1}
    \phi(x,z,v)=\int_\brn \overline{\phi}(x,y,v)z(dy).
\end{align} 
The similar convergence rate is also obtained in \cite{MR3857463} for the linear quadratic mean field Stackelberg games. In Section~\ref{sec:example}, we also discuss the special case where our coefficients take a form similar to \eqref{intr_1}, and we establish an $\mathcal{O}(1/\sqrt{N})$ convergence rate (independent of the dimension $n_1$); to this end, we require the coefficients to be separable in the distribution variable $z$ and the other arguments (see Assumption~\ref{assumption:exp}). This separability is necessary because our framework involves time delays, and the auxiliary processes in the proof may be adapted to different filtrations.  In contract to \cite[Theorem 7.2]{Nourian-Caines}, we additionally allow the followers’ state processes to depend on the leader’s state, which is natural in a Stackelberg game since the leader and the followers act sequentially; and our setting can also include the linear cases.

 For studies on the convergence rate in mean field games and mean field type control problems, the proofs are mainly based on results of the rate of the convergence in Wasserstein distance of the empirical measure (see Lemma~\ref{lemma:wasserstien} for instance) and regularity results on SDEs (see Lemma~\ref{prop:moment} for instance). Compared with existing results in mean field theory, our method is based on not only the abovementioned approaches, but also the usage of the mixture-convexity of Wasserstein metric (see Lemma~\ref{lma:convex:wa} and the newly proposed Lemma~\ref{lma:pq:mu}), and the extension of immersion of filtration (see Lemma~\ref{lma:property:probability}). These techniques are required here for the reason that we include the time delay $\de$, which is a random variable and complicates the filtrations, therefore, the problem is totally unconventional; and these techniques are important in proving the crucial estimate, i.e., the Wasserstein distance of the empirical measure involving time delay and the conditional distribution integrated with respect to $\de$ (see the proofs of Lemmas~\ref{prop:convergence:discrete} and \ref{prop:general} for instance). Moreover, we also need to give the regularity of SDEs with respect to the time delay parameter (see Lemma~\ref{prop:holdercontinuity}).

The remainder of the article is organized as follows. Section~\ref{sec:formulation} introduces the formulation of the $N$-player Stackelberg game and its limiting counterpart. Section~\ref{sec:assumption&SDE} presents the standing assumptions and establishes preliminary estimates for the controlled SDEs. In Section~\ref{sec:convergence}, we derive estimates for the Wasserstein metric between the empirical distribution and the conditional distribution. Section~\ref{sec:main} contains the main results, showing that the solution of the limiting problem yields an approximate Stackelberg Nash equilibrium for the $N$-player game, together with the corresponding convergence rate.  Section \ref{sec:conclution} concludes the article with  a discussion of future research directions.
Finally, some technical proofs for Sections~\ref{sec:formulation}, \ref{sec:assumption&SDE} and \ref{sec:convergence} are deferred to Appendices~\ref{app:sec:2}, \ref{app:sec:3} and \ref{pf:prop:example}, respectively.

\section{Preliminaries and Problem Formulation}\label{sec:formulation}

\subsection{Wasserstein space and some properties }

For $q\geq 1$, let \( \mathcal{P}_q(\mathbb{R}^{n_1}) \) be the space of probability measures equipped with the $q$-Wasserstein metric, \( W_q(\cdot, \cdot) \) such that for any \( \mu \) and \( \nu \) in \( \mathcal{P}_q(\mathbb{R}^{n_1}) \),		
		\begin{align}\label{eq:def:w}
		W_q(\nu_1, \nu_2) := \inf_{\gamma \in \Gamma(\nu_1, \nu_2)} \left( \int_{\mathbb{R}^{n_1} \times \mathbb{R}^{n_1}} |x - y|^q \, d\gamma(x, y) \right)^{\!\!1/q},
		\end{align}
		where the infimum is taken over the family \( \Gamma(\nu_1, \nu_2) \), the collection of all joint measures with respective marginals \( \nu_1 \) and \( \nu_2 \).  
		For any probability measure \(\mu \in \mathcal{P}_q(\mathbb{R}^{n_1})\), we write 
		\[
		\mathcal{M}_q(\mu) = \left( \int_{\mathbb{R}^{n_1}} |x|^q  d\mu(x) \right)^{\!\!1/q}.
		\]
Let $(\Omega,\f,\mathbb{P})$ be a complete filtered probability space, $\mE\left[\cdot\right]$ denotes the expectation, and $\mE[\cdot|\mathscr{C}]$  denotes the conditional expectation given the $\sigma$-algebra $\mathscr{C}\subset\mathscr{F}$.

The following lemma establishes explicit bounds on the convergence rate of the empirical measure for independent and identically distributed (iid. for short) random variables, as given in \cite[Theorem 5.8]{carmona2018probabilistic}.

\begin{lemma}\label{lemma:wasserstien}
	Let $\left(X_k\right)_{k \geqslant 1}$ be a sequence of i.i.d. random variables in $\mathbb{R}^{n_1}$ with a common distribution $\mu \in \mathcal{P}_q\left(\mathbb{R}^{n_1}\right)$ for some $q>4$
    , then, for each dimension $n_1 \geqslant 1$, there exists an universal constant $C=C\left(n_1, q\right)$ such that, for all $N \geqslant 2$ :
	\begin{align*}
		\mathbb{E}\left[W_2\left(\frac1N\sum_{k=1}^N\delta_{X_k}, \mu\right)^2\right] 
		\leqslant C(n_1, q) \mathcal{M}_q^2(\mu) f(N),
	\end{align*}
    where $\delta_x$ is the Dirac measure with a unit mass at $x$, and
\begin{align}\label{def:function:f}
	f(N)=\begin{cases}N^{-1 / 2}, & \text { if } n_1<4; \\
		N^{-1 / 2} \log (N), & \text { if } n_1=4; \\
		N^{-2 / n_1}, & \text { if } n_1>4.\end{cases}
\end{align}
\end{lemma}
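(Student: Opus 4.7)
The plan is to follow the Fournier--Guillin style dyadic argument that underlies Theorem 5.8 of \cite{carmona2018probabilistic}. The strategy has three stages: a truncation-plus-dyadic decomposition of $\mathbb{R}^{n_1}$, a binomial concentration estimate on the empirical mass of each dyadic cube, and a final optimization over the truncation radius and the depth of the dyadic tree.

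First I would fix a truncation radius $R>0$ and split the $W_2^2$-cost into the contribution of the ball $B_R=\{|x|\le R\}$ and that of its complement $B_R^c$. The tail on $B_R^c$ is handled by a direct moment estimate: H\"older's inequality gives $\int_{B_R^c}|x|^2\,d\mu(x)\le R^{-(q-2)}\mathcal{M}_q^q(\mu)$, and the analogous control holds in expectation for the empirical measure $\mu_N := \frac1N\sum_{k=1}^N\delta_{X_k}$ because the $X_k$ are i.i.d.\ with law $\mu$. On $B_R$, I would use a nested dyadic partition into cubes $C\in\mathcal{D}_k$ of side $R\,2^{-k}$, for $k=0,1,\ldots,K$. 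The standard multi-scale Wasserstein bound yields
\[
W_2^2(\mu_N|_{B_R},\mu|_{B_R})\;\le\; c\sum_{k=0}^{K} R^2\,2^{-2k}\sum_{C\in\mathcal{D}_k}\bigl|\mu_N(C)-\mu(C)\bigr|\;+\;\text{(residual at depth $K$)}.
\]

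Next, for each cube $C$, the count $N\mu_N(C)$ is Binomial$(N,\mu(C))$, so $\mathbb{E}|\mu_N(C)-\mu(C)|\le\sqrt{\mu(C)/N}$. Summing by Cauchy--Schwarz over the $\lesssim 2^{kn_1}$ cubes at scale $k$ gives $\mathbb{E}\sum_{C\in\mathcal{D}_k}|\mu_N(C)-\mu(C)|\le \sqrt{2^{kn_1}/N}$, so the scale-$k$ contribution is of order $R^2\,2^{-(2-n_1/2)k}N^{-1/2}$. Summing in $k$ reveals three regimes according to the sign of $2-n_1/2$: the series is convergent for $n_1<4$, logarithmically divergent for $n_1=4$, and geometric of ratio $2^{n_1/2-2}$ for $n_1>4$. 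This is exactly the algebraic origin of the three-case split in \eqref{def:function:f}. Finally I would choose $K$ and $R$ to balance the three error sources (tail $R^{-(q-2)}\mathcal{M}_q^q(\mu)$, dyadic main term, and finest-scale residual), which produces $f(N)$ with a constant depending only on $(n_1,q)$ times $\mathcal{M}_q^2(\mu)$.

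The hardest ingredient is the case $n_1\ge 4$: the dyadic sum in $k$ diverges, and one must tune $K$ so that the finest-scale residual (essentially $R^2 2^{-2K}$) is absorbed without forcing too shallow a partition, and simultaneously tune $R$ against the tail moment $R^{-(q-2)}\mathcal{M}_q^q(\mu)$. The assumption $q>4$ enters precisely here: to obtain an inequality at the level of squared Wasserstein distance (rather than $W_1$) one needs a quantitative, uniform-in-$N$ control on $\mathbb{E}\,\mathcal{M}_2^2(\mu_N)$, which in turn requires $|X_k|^2$ to lie in $L^{q/2}$ with $q/2>2$. With $q>4$ the tail truncation and the optimized dyadic balance combine consistently to give the stated $f(N)$.
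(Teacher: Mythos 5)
The paper does not actually prove this lemma: it is imported verbatim from \cite[Theorem 5.8]{carmona2018probabilistic}, whose proof is the Fournier--Guillin argument you are sketching. So the right comparison is with that standard proof, and against it your outline has a genuine quantitative gap. The problem is the single truncation radius $R$. After the dyadic-cube analysis on $B_R$ (which is correct, including the binomial variance bound, the Cauchy--Schwarz step over the $\sim 2^{kn_1}$ cubes, and the three-regime summation in $k$), the bulk contribution carries the prefactor $R^2$, so your total bound has the form $C\left(R^{-(q-2)}\mathcal{M}_q^q(\mu)+R^2 f(N)\right)$. Optimizing over $R$ gives $R=\mathcal{M}_q(\mu)f(N)^{-1/q}$ and the value $C\,\mathcal{M}_q^2(\mu)\,f(N)^{(q-2)/q}$, which is strictly weaker than the claimed $C\,\mathcal{M}_q^2(\mu)\,f(N)$; no choice of $R$ or of the depth $K$ repairs this, since forcing the tail below $\mathcal{M}_q^2 f(N)$ forces $R\gtrsim \mathcal{M}_q f(N)^{-1/(q-2)}$ and hence a bulk term of order $\mathcal{M}_q^2 f(N)^{(q-4)/(q-2)}$. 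The actual proof avoids this loss by replacing the single truncation with a dyadic \emph{annulus} (peeling) decomposition $\{2^m\le |x|<2^{m+1}\}_{m\ge 0}$: on the $m$-th annulus the cube argument produces a cost weight $2^{2m}$ but an empirical-fluctuation factor controlled by $\sqrt{\mu(|x|\ge 2^m)}\lesssim 2^{-qm/2}\mathcal{M}_q^{q/2}(\mu)$, and the series $\sum_m 2^{2m}2^{-qm/2}$ converges precisely because $q>4$. (There is also a minor unaddressed point: $\mu_N|_{B_R}$ and $\mu|_{B_R}$ have different total masses, so the restricted $W_2$ you write needs a normalization or a mass-defect correction.)

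Relatedly, your explanation of where $q>4$ enters is not the right one: $\mathbb{E}\,\mathcal{M}_2^2(\mu_N)=\mathcal{M}_2^2(\mu)$ exactly for any $q\ge 2$, so no higher integrability is needed to control the second moment of the empirical measure. The hypothesis $q>4$ is used (i) to make the annulus series above summable, and (ii) to ensure the companion term $N^{-(q-2)/q}$ appearing in the general Fournier--Guillin bound is dominated by $N^{-1/2}$, so that the rate collapses to the stated $f(N)$. Since the paper uses the full strength $\mathcal{M}_q^2(\mu)f(N)$ of the lemma (e.g.\ in \eqref{eq:convergence:single} and throughout Section~\ref{sec:convergence}), the weaker exponent $f(N)^{(q-2)/q}$ your scheme delivers would degrade every downstream convergence rate, so this is not a cosmetic loss.
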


We give the following lemma concerning estimates of the Wasserstein metric between two probability measures obtained via convex combinations, with the proofs provided in Appendix~\ref{pf:lma:convex:wa}. 

\begin{lemma}\label{lma:convex:wa}
Let $A\subset \mR$ be a Borel set and $\pi$ a probability measure on $A$. For each \( s \in A \), let \( \mu_s, \nu_s \in \mathcal{P}_2(\mathbb{R}^{n_1}) \). Assume that \( \mu_{\cdot} \) and \( \nu_{\cdot} \) are measurable with respect to \( s \in A \), and that $\sup\limits_{s\in A}\mathcal{M}_2(\mu_s)+\sup\limits_{s\in A}\mathcal{M}_2(\nu_s)<\infty$,  then, the following inequality holds:
	\begin{align}\label{lma:convex:wa:1}
		W_2^2\left(\int_A\mu_s\md \pi(s), \int_A\nu_s\md \pi(s) \right)\leq \int_AW_2^2(\mu_s,\nu_s)\md \pi(s).
	\end{align}  
In particular, for any 	$\lambda_k\geq0$ with $\sum_{k=1}^n\lambda_k=1$ and $\nu_k,\mu_k\in\mathcal{P}_2(\mR^{n_1})$, we have
	\begin{align}
		\label{lma:convex:wa:2}W_2^2\left(\sum_{k=1}^{n}\lambda_k\mu_k,\sum_{k=1}^{n}\lambda_k\nu_k\right)\leq \sum_{k=1}^{n}\lambda_k W_2^2(\mu_k,\nu_k).
	\end{align}
	Furthermore, if $\nu_k=\nu $ for any $k$, then 
	\begin{align}
		\label{lma:convex:wa:3}W_2^2\left(\sum_{k=1}^{n}\lambda_k\mu_k,\nu\right)\leq \sum_{k=1}^{n}\lambda_k W_2^2(\mu_k,\nu).
	\end{align}
\end{lemma}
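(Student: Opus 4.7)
The plan is to prove the integral inequality \eqref{lma:convex:wa:1} first via a direct coupling construction, and then deduce the discrete statements \eqref{lma:convex:wa:2} and \eqref{lma:convex:wa:3} as special cases by specializing $A$ to a finite set and $\pi$ to a convex combination of Dirac masses.

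For the main inequality, my approach is to lift optimal couplings parametrized by $s$ to a coupling of the mixtures. Concretely, for each $s\in A$, invoke the existence of an optimal coupling $\gamma_s\in\Gamma(\mu_s,\nu_s)$ attaining $W_2^2(\mu_s,\nu_s)$ (this infimum is attained in $\mathcal{P}_2(\mathbb{R}^{n_1}\times\mathbb{R}^{n_1})$ under the standing second-moment assumption). Then define
\begin{align*}
\gamma(B):=\int_A \gamma_s(B)\,\md\pi(s),\qquad B\in\mathcal{B}(\mathbb{R}^{n_1}\times\mathbb{R}^{n_1}).
\end{align*}
Checking the marginals is routine: for any Borel set $E\subset\mathbb{R}^{n_1}$, the first marginal gives $\gamma(E\times\mathbb{R}^{n_1})=\int_A\mu_s(E)\md\pi(s)=\bigl(\int_A\mu_s\md\pi(s)\bigr)(E)$, and symmetrically for the second marginal. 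Hence $\gamma\in\Gamma\bigl(\int_A\mu_s\md\pi,\int_A\nu_s\md\pi\bigr)$, so by the definition \eqref{eq:def:w},
\begin{align*}
W_2^2\Bigl(\!\int_A\!\mu_s\md\pi,\int_A\!\nu_s\md\pi\Bigr)\leq \int_{\mathbb{R}^{n_1}\times\mathbb{R}^{n_1}}\!|x-y|^2\md\gamma(x,y)=\int_A\!\Bigl(\int |x-y|^2\md\gamma_s\Bigr)\md\pi(s)=\int_A\! W_2^2(\mu_s,\nu_s)\md\pi(s),
\end{align*}
where the middle equality is Fubini applied to the definition of $\gamma$. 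The uniform moment hypothesis $\sup_{s\in A}\mathcal{M}_2(\mu_s)+\sup_{s\in A}\mathcal{M}_2(\nu_s)<\infty$ guarantees all the integrals above are finite and that the mixture measures indeed lie in $\mathcal{P}_2(\mathbb{R}^{n_1})$.

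The one delicate point, and what I expect to be the main technical obstacle, is ensuring that $s\mapsto\gamma_s$ can be chosen in a jointly measurable way so that $\gamma$ is well defined as a probability measure and Fubini applies. This is where I would invoke a standard measurable selection result from optimal transport: the multifunction $s\mapsto\Gamma_{\mathrm{opt}}(\mu_s,\nu_s)$ of optimal couplings is Borel-measurable with nonempty compact values (using the stated measurability of $s\mapsto\mu_s,\nu_s$ and the continuity of the quadratic cost under weak convergence restricted to second-moment bounded families), so a measurable selector exists by the Kuratowski--Ryll-Nardzewski theorem. Once this selection is in hand, the rest of the argument is mechanical.

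Finally, for \eqref{lma:convex:wa:2}, take $A=\{1,\dots,n\}$ with the counting $\sigma$-algebra and $\pi=\sum_{k=1}^n\lambda_k\delta_k$; every map on $A$ is trivially measurable and $\mathcal{M}_2$ is uniformly bounded over the finite family, so \eqref{lma:convex:wa:1} specializes directly. Inequality \eqref{lma:convex:wa:3} is then the immediate consequence of setting $\nu_k=\nu$ for all $k$ in \eqref{lma:convex:wa:2} and using $\sum_{k=1}^n\lambda_k=1$.
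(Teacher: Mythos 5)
Your proposal is correct and follows essentially the same route as the paper's proof in Appendix~\ref{pf:lma:convex:wa}: take an optimal coupling $\gamma_s$ for each $s$, mix them against $\pi$, verify the marginals, and apply the definition of $W_2$, with the discrete cases obtained by specializing $\pi$ to a convex combination of Dirac masses. The only difference is that you explicitly address the measurable-selection issue for $s\mapsto\gamma_s$ (which the paper passes over silently), and that added care is welcome rather than a divergence in method.
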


Lemma \ref{lma:convex:wa} addresses the case where the coefficients are identical but the probability measures differ, and we also give the following result, which considers the case where the coefficients differ but the corresponding probability measures are identical. The proof of Lemma~\ref{lma:pq:mu} is given in Appendix~\ref{pf:lma:pq:mu}. 

\begin{lemma}\label{lma:pq:mu}
	Suppose that $\mu_k\in \mathcal{P}_2(\mR^{n_1})$ for $k=1,2,\cdots,n$, and suppose that $p_k\geq0,q_k\geq0$ and $\sum_{k=1}^np_k=\sum_{k=1}^nq_k=1$. Then, the following inequality holds:
	\begin{align*}
		W^2_2\left(\sum_{k=1}^np_k\mu_k,\sum_{k=1}^nq_k\mu_k\right)\leq\sum_{h=1}^n\sum_{l=1}^n\hat{\pi}_{hl}W^2_2(\mu_{h},\mu_{l}),
        \end{align*}
	where $\hat{\pi}_{hh}=\min\{p_h,q_h\}$ for $h=1,\cdots,n$; and for $h\neq l$, $\hat{\pi}_{hl}$ is given by
	\begin{equation}\label{def:pi_ij}
		\hat{\pi}_{hl}=\begin{cases}
			\dfrac{(p_h-q_h)(q_h-p_l)}{\sum_{k\in A^{\complement}}(p_k-q_k)}, \quad &h\in A^{\complement}\,\, \text{and}\,\,l\in A,\\
            0,\quad& h\in A\,\,\text{or}\,\,l\in A^{\complement},\\
		\end{cases}
	\end{equation}
	with $A:=\{h:p_h\leq q_h\}$.
    Moreover, we have
    \begin{align}\label{eq:p_ii}
		\sum_{h=1}^n\hat{\pi}_{hh}=1-\frac{1}{2}\sum_{h=1}^n|p_h-q_h|.
	\end{align}
\end{lemma}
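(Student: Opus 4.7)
The plan is to prove the inequality by constructing an explicit transport plan between the two mixture measures $\sum_k p_k \mu_k$ and $\sum_k q_k \mu_k$, using the discrete weights $\hat{\pi}_{hl}$ together with optimal couplings of the component pairs $(\mu_h, \mu_l)$. The crucial preliminary step is to interpret $(\hat{\pi}_{hl})_{h,l=1}^n$ as a probability measure on $\{1,\ldots,n\}^2$ whose marginals are exactly $(p_h)$ and $(q_l)$; once this is established, a standard glueing argument produces a coupling on $\mathbb{R}^{n_1} \times \mathbb{R}^{n_1}$ which provides the bound.

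For the marginal verification, I would split according to the index set $A = \{h : p_h \leq q_h\}$ and its complement. For $h \in A$, the only nonzero entry in row $h$ of $\hat{\pi}$ is $\hat{\pi}_{hh} = p_h$, so trivially $\sum_l \hat{\pi}_{hl} = p_h$. For $h \in A^{\complement}$, I would exploit the normalization by $\sum_{k \in A^{\complement}}(p_k - q_k)$ together with the balance identity $\sum_{k \in A^{\complement}}(p_k - q_k) = \sum_{l \in A}(q_l - p_l)$ (which follows from $\sum_k p_k = \sum_k q_k = 1$) to obtain that the off-diagonal mass in row $h$ sums to $p_h - q_h$, giving $\sum_l \hat{\pi}_{hl} = q_h + (p_h - q_h) = p_h$. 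Analogous checks on columns yield $\sum_h \hat{\pi}_{hl} = q_l$, so that $\hat{\pi}$ is a valid coupling of the weight vectors.

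Next, for each $(h,l)$, let $\gamma_{hl} \in \Gamma(\mu_h, \mu_l)$ be a coupling achieving $W_2^2(\mu_h, \mu_l)$ (which exists since $\mu_h \in \mathcal{P}_2(\mathbb{R}^{n_1})$). Then define
\begin{equation*}
\gamma \defeq \sum_{h=1}^n \sum_{l=1}^n \hat{\pi}_{hl}\, \gamma_{hl}.
\end{equation*}
By the marginal computation above, $\gamma \in \Gamma\!\left(\sum_k p_k \mu_k,\ \sum_k q_k \mu_k\right)$. Plugging $\gamma$ into the definition~\eqref{eq:def:w} of $W_2$ and using the optimality of each $\gamma_{hl}$ gives
\begin{equation*}
W_2^2\!\left(\sum_k p_k \mu_k,\ \sum_k q_k \mu_k\right) \leq \int |x-y|^2\, d\gamma(x,y) = \sum_{h,l} \hat{\pi}_{hl}\, W_2^2(\mu_h, \mu_l),
\end{equation*}
which is the desired inequality. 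The identity~\eqref{eq:p_ii} then follows from $\hat{\pi}_{hh} = \min\{p_h, q_h\} = \tfrac{1}{2}\bigl(p_h + q_h - |p_h - q_h|\bigr)$ summed over $h$, using $\sum_h p_h = \sum_h q_h = 1$.

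The main obstacle will be the marginal verification in the off-diagonal case, where one needs to carefully unpack the specific form of $\hat{\pi}_{hl}$ in~\eqref{def:pi_ij} and exploit the normalization $\sum_{k \in A^{\complement}}(p_k - q_k)$; once this combinatorial step is completed, the remainder of the argument is a routine application of the glueing construction and the mixture-convexity of $W_2^2$, in the same spirit as Lemma~\ref{lma:convex:wa}.
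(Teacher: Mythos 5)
Your proposal is correct and follows essentially the same route as the paper: both reduce the problem to checking that $\hat{\pi}$ is a coupling of the weight vectors $(p_h)$ and $(q_l)$ via the balance identity $\sum_{h\in A^{\complement}}(p_h-q_h)=\sum_{h\in A}(q_h-p_h)$, and then invoke the mixture construction $\gamma=\sum_{h,l}\hat{\pi}_{hl}\gamma_{hl}$ (which the paper compresses into the phrase ``similar to Lemma~\ref{lma:convex:wa}''). Your derivation of \eqref{eq:p_ii} from $\min\{p_h,q_h\}=\tfrac12(p_h+q_h-|p_h-q_h|)$ is a trivially equivalent variant of the paper's computation.
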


Next, we recall some theory on regular conditional distributions from \cite[Chapter 10]{dudley2018real}. Let $X: (\Omega, \f, \P) \to (E, \B)$ be measurable and $\mathscr{C}$ a sub-$\sigma$-algebra of $\f$, and let $\P|_{\mathscr{C}}$ be the restriction of $\mathbb{P}$ to $\mathscr{C}$. A function $\mathbb{P}_X^{\mathscr{C}}(\omega, B): \Omega \times \B \to [0,1]$ is a {regular conditional distribution} of $X$ given $\mathscr{C}$ if: (i) $\forall \omega \in \Omega$, $\mathbb{P}_X^{\mathscr{C}}(\omega, \cdot)$ is a probability measure on $\B$; (ii) $\forall B \in \B$, $\mathbb{P}_X^{\mathscr{C}}(\cdot, B)$ is $\mathscr{C}$-measurable and $\mathbb{P}_X^{\mathscr{C}}(\cdot, B) = \P(X^{-1}(B) \mid \mathscr{C})(\cdot)$, $\P|_{\mathscr{C}}$-a.s..  If a regular conditional distribution exists, then for any measurable  $f: E \to \RR$ with $\mE\left[|f(X)|\right] < \infty$, we have
\[
	\mE\left[f(X) \mid \mathscr{C}\right](\omega) = \int_E f(x) \mathbb{P}_X^{\mathscr{C}}(\omega, dx) \quad \text{$\P|_{\mathscr{C}}$-a.s.}.
\]
Particularly, if $(E, \B)$ is Polish, then for any sub-$\sigma$-algebra $\mathscr{C} \subset \f$, there exists a regular conditional distribution of $X$ given $\mathscr{C}$.

Next, we introduce a key property of regular conditional distributions, which plays a crucial role in deriving the convergence rate (see \eqref{eq:pro:conditional}). It is worth noting that this property—especially \eqref{eq:property:X}—is closely related to the  ``($\mathscr{H}$)-hypothesis" in \cite{bremaud1978changes} or the concept ``immersion of filtration" in \cite{aksamit2017enlargement,carmona2018probabilistic}. However, our setting requires a  stronger result formulated in terms of regular conditional distributions rather than merely conditional expectations, namely \eqref{eq:property:regular}, whose proof is provided in Appendix~\ref{pf:lma:property:probability}.
\begin{lemma}\label{lma:property:probability}
    Let $X: (\Omega, \f, \P) \to (\mR^{n_1}, \B(\mR^{n_1}))$ be measurable and integrable. Assume that $\mathscr{C}_1\subset \mathscr{C}_2$ and $\mathscr{G}$ are three sub-$\sigma$-algebras of $\f$ such that:(i) $\mathscr{C}_2$ and $\mathscr{G}$ are independent;(ii) $X$ is $\mathscr{C}_1\vee \mathscr{G}$-measurable.  Then we have
    \begin{align}\label{eq:property:regular}
        \mP_X^{\mathscr{C}_1}=\mP_X^{\mathscr{C}_2}, \quad \mP-a.s..
    \end{align}
\end{lemma}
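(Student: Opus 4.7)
The plan is to reduce the claim to a two-step program: first, establish the set-wise equality $\mP(X\in B\mid \mathscr{C}_1)=\mP(X\in B\mid \mathscr{C}_2)$ almost surely for every fixed Borel set $B\subset\mR^{n_1}$; second, strengthen this into a single almost sure event on which the two regular conditional distributions coincide as measures on $(\mR^{n_1},\B(\mR^{n_1}))$.

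For the first step I would argue by the Dynkin $\pi$--$\lambda$ theorem. Since $X$ is $\mathscr{C}_1\vee\mathscr{G}$-measurable, $\{X\in B\}\in\mathscr{C}_1\vee\mathscr{G}$, so it suffices to prove that $\mP(H\mid\mathscr{C}_1)=\mP(H\mid\mathscr{C}_2)$ a.s.\ for every $H\in\mathscr{C}_1\vee\mathscr{G}$. The rectangles $\{C\cap G:\,C\in\mathscr{C}_1,\,G\in\mathscr{G}\}$ form a $\pi$-system generating $\mathscr{C}_1\vee\mathscr{G}$, and on such a rectangle the independence of $\mathscr{G}$ from $\mathscr{C}_2\supset\mathscr{C}_1$ gives
\[
\mP(C\cap G\mid\mathscr{C}_i)=\ind_C\,\mP(G\mid\mathscr{C}_i)=\ind_C\,\mP(G),\qquad i=1,2,
\]
so the two conditional probabilities coincide on the generator. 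The class of $H\in\mathscr{C}_1\vee\mathscr{G}$ for which the a.s.\ identity persists is a $\lambda$-system (stability under proper differences follows from linearity, while stability under increasing unions follows from monotone convergence of conditional expectations), hence equals $\mathscr{C}_1\vee\mathscr{G}$.

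For the second step I would exploit that $\mR^{n_1}$ is Polish, so $\B(\mR^{n_1})$ admits a countable generating $\pi$-system $\mathcal{A}$ (say open balls with rational centers and radii). Step~1 provides, for each $A\in\mathcal{A}$, a $\mP$-null set $N_A$ outside of which $\mP_X^{\mathscr{C}_1}(\omega,A)=\mP_X^{\mathscr{C}_2}(\omega,A)$. The union $N=\bigcup_{A\in\mathcal{A}}N_A$ remains $\mP$-null by countable subadditivity. On $\Omega\setminus N$, the maps $B\mapsto \mP_X^{\mathscr{C}_i}(\omega,B)$ are bona fide probability measures on $\B(\mR^{n_1})$ agreeing on the $\pi$-system $\mathcal{A}$, and a second invocation of the $\pi$--$\lambda$ theorem extends this agreement to all of $\sigma(\mathcal{A})=\B(\mR^{n_1})$, which is exactly \eqref{eq:property:regular}.

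I expect the main obstacle to be the passage in Step~2 from exceptional sets $N_B$ depending on $B$ to a single $\mP$-null set valid for every Borel $B$ simultaneously. This is precisely the strengthening that distinguishes \eqref{eq:property:regular} from the classical immersion/$(\mathscr{H})$-hypothesis, which only asserts the conditional-expectation identity event-by-event; overcoming it requires leveraging both the countable generation of $\B(\mR^{n_1})$ coming from the Polish structure of the target and the existence of the regular versions $\mP_X^{\mathscr{C}_i}(\omega,\cdot)$, without either of which one cannot upgrade pointwise set-identities to an identity of random measures.
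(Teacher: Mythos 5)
Your proof is correct and follows essentially the same route as the paper's: first the set-wise identity $\mP(H\mid\mathscr{C}_1)=\mP(H\mid\mathscr{C}_2)$ via independence on the $\pi$-system of rectangles $C\cap G$ plus a monotone-class/Dynkin argument, then the upgrade to an identity of random measures by collecting countably many null sets over a countable determining class. The only (immaterial) difference is that the paper uses rational lower-left orthants together with right-continuity of the conditional CDF where you use rational open balls and a second $\pi$--$\lambda$ invocation.
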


Under the same assumptions as in Lemma \ref{lma:property:probability}, we have, in particular,
\begin{align}\label{eq:property:X}
        \mE\left[X|\mathscr{C}_1\right]= \mE\left[X|\mathscr{C}_2\right], \quad \mP-a.s..
    \end{align}

We also introduce the following notations. For $q\geq 1$, for any $\xi\in \mathcal{L}^q(\Omega,\mathscr{F},\mathbb{P})$, we denote by $\|\xi\|_{q}:= \left(\mE\left[|\xi|^q\right]\right)^{\frac1q}$ its $\mathcal{L}^q$-norm. Suppose that $\mathcal{F}=\{\mathcal{F}_t,0\le t\le T\}$ is a completed filtration on $(\Omega,\f,\mathbb{P})$. We denote by $\lr^q_{\mathcal{F}}([t_1,t_2];\brn)$ the set of all $\mathcal{F}$-progressively-measurable $\brn$-valued processes $\alpha(\cdot)=\{\alpha(t),\ t_1\le t\le t_2\}$ such that 
\begin{align*}
    \|\alpha\|_{\mathcal{L}^q(t_1,t_2)}:=\left(\e\left[\int_{t_1}^{t_2} |\alpha(t)|^q dt\right]\right)^{\frac 1q} <+\infty;
\end{align*}
and denote by $\mathcal{S}^q_{\mathcal{F}}([t_1,t_2];\brn)$ the family of all $\mathcal{F}_t$-adapted and continuous $\brn$-valued processes $\alpha(\cdot)$ such that 
\begin{align*}
    \|\alpha\|_{\mathcal{S}^q(t_1,t_2)}:=\left(\e\left[\sup_{t_1\le t\le t_2} |\alpha(t)|^q\right]\right)^{\frac 1q} <+\infty.
\end{align*}
When $t_1=0$ and $t_2=T$, we simply  write $\|\cdot\|_{\mathcal{L}^q}$ and $\|\cdot\|_{\mathcal{S}^q}$ for $\|\cdot\|_{\mathcal{L}^q(0,T)}$ and $\|\cdot\|_{\mathcal{S}^q(0,T)}$, respectively.
Finally, we denote by $\lr^q_{\mathcal{F}}([t_1,t_2];\mathcal{P}_q(\mR^{n}))$ the set of all $\mathcal{F}$-progressively-measurable $\mathcal{P}_q(\mR^{n})$-valued processes $m(\cdot)=\{m(t),\ t_1\le t\le t_2\}$ such that $
\e\left[\int_{t_1}^{t_2}\mathcal{M}_q^q (m(t))\md t\right]<+\infty$.		

\subsection{Problem formulation}

\subsubsection{$N$-player Stackelberg game}

We consider the game involving one dominating player (also called Player $0$) and $N$ minor players (we call the $i$-th player as Player $i$), where the state process for the followers have time delays in the information of the leader. The $N$-player Stackelberg game is different from the MFG with one major and many minor players studied in \cite{Carmona-Zhu,Huang-Tang,Nourian-Caines}. In the latter, although the major player strongly influences the minors, all players (including the major player) determine their optimal strategies simultaneously, while in the $N$-player Stackelberg game, the followers move after the leader. 

Our problem includes the following randomness:
\begin{enumerate}[(i)]
    \item (Wiener processes) The Wiener process for Player $0$ is denoted by $W_0$, which is a $\mathbb{R}^{d_0}$-dimensional Wiener process on $(\Omega,\mathscr{F},\mathbb{P})$; for $1\le i\le N$, the Wiener process for Player $i$ is denoted by $W_1^i$, which is a $\mathbb{R}^{d_1}$ -dimensional Wiener process on $(\Omega,\f,\mathbb{P})$. These $N+1$ Wiener processes are independent. 
    \item (Initial condition) The initial condition for Player $0$ is a path $\left\{\xi_0(t): t \in[-b, 0]\right\}$; for $1\le i\le N$, the initial condition for Player $i$ is a random variable $\xi_1^i$, which are iid. These $N+1$ conditions are independent, and they are all independent of all  Wiener processes $\left\{W_0, W_1^i,\ 1\le i\le N\right\}$. 
    \item (Uncertain delay parameter) For Player $i$, the delay parameter is a random variable $\de_i:\Omega\to[a,b]$, with some $0\leq a\le b$. $\{\de_i,\ 1\le i\le N\}$ are iid  with the same distribution $\pi_{\Delta}$,  and they are all independent of all Wiener processes $\left\{W_0, W_1^j,\ 1\le i\le N\right\}$ and initial conditions $\left\{\xi_0,\xi_1^i,\ 1\le i\le N \right\}$.  
\end{enumerate}
We can then define the following filtrations:
\begin{align*}
    \mathcal{F}_t^0  :=\ & \left\{
    \begin{aligned}
        &\sigma\left(\{\xi_0(s):  s \in[-b,t] \}\right),\qquad  t \in[-b, 0],\\
        &\sigma\big(\{\xi_0(s):s\in[-b,0]\},\  \{W_0(s): s \in[0,t]\}\big), \qquad t\in(0,T]; 
    \end{aligned}
    \right.\\
    \mathcal{F}_t^{1, i} :=\ & \sigma\left(\xi_1^i,\  \{W_1^i(s): s \in[ 0,t]\}\right), \qquad t\in[0,T],\qquad 1\le i\le N;\\
    \mathcal{F}_t :=\ & \sigma\big(\mathcal{F}_t^0,\left\{\mathcal{F}_t^{1, i},\ 1\le i\le N\right\},\ \left\{\Delta_j,\ 1\le i\le N\right\} \big),\qquad t\in[0,T];
\end{align*}
here, the filtration $\mathcal{F}^0$ is for the leader, while $\mathcal{F}^{1, i}$ denotes the filtration for Player $i$, and $\mathcal{F}$ is the filtration generated by leader and all followers. Consider the following drift and diffusion coefficients:
\begin{align*}
    &g_0:\RR^{n_0}\times\pr_2(\RR^{n_1})\times \RR^{p_0}\to\RR^{n_0},\qquad \sigma_0:\RR^{n_0}\times\pr_2(\RR^{n_1})\times \RR^{p_0}\to\RR^{n_0\times d_0},\\
    &g_1:\RR^{n_1}\times\pr_2(\RR^{n_1})\times \RR^{p_1}\times\RR^{n_0}\to\RR^{n_1},\qquad \sigma_1:\RR^{n_1}\times\pr_2(\RR^{n_1})\times \RR^{p_1}\times\RR^{n_0}\to\RR^{n_1\times d_1},
\end{align*}
whose regularity assumptions will be imposed in Assumption \ref{assump:lip} (i)(ii). For Player $0$, its control is denoted by $v_0(\cdot) \in \mathcal{L}_{\mathcal{F}^0}^2\left([0, T] ; \mathbb{R}^{p_0}\right)$; for Player $i$, its control (with delay $\delta_i$) is denoted by $ v_1^{i,\delta_i}(\cdot)\in \mathcal{L}_{\mathcal{G}^{i,\delta_i}}^2\left([0, T] ; \mathbb{R}^{p_1}\right)$, where the filtration $\mathcal{G}^{i,\delta_i}$ is defined in Subsection~\ref{subsec:Limit}.
Then, the state process $y_0^{\mathbf{v}}$ for Players $0$, and the state process $y_1^{i, \delta_i, \mathbf{v}}$ for Player $i$ (with a delay $\delta_i$), corresponding to the controls $\mathbf{v}(\cdot):=\left(v_0(\cdot),v_1^{1,\delta_1}(\cdot),\dots v_1^{N,\delta_N}(\cdot)\right)$ satisfy the following stochastic differential equations (SDEs):
\begin{align}
	& \left\{ \begin{aligned}\label{eq:y_0}
		&\md y_0^{\mathbf{v}} (t) =g_0\bigg(y_0^{\mathbf{v}}(t),  \text{\footnotesize $\frac 1N \sum_{j=1}^N \delta_{y_1^{j, \Delta_j,\mathbf{v}}(t)}$ }, v_0(t)\bigg) \md t\\
        &\qquad\qquad +\sigma_0\bigg(y_0^{\mathbf{v}}(t), \text{\footnotesize $\frac 1N {\sum_{j=1}^N \delta_{y_1^{j, \Delta_j,\mathbf{v}}(t)}}$}, v_0(t)\bigg) \md W_0(t), \quad t\in(0,T],\\
		&y_0^{\mathbf{v}}(t)  =\xi_0(t), \quad t \in[-b, 0] ;
	\end{aligned}\right.\\
	&\left\{ \begin{aligned}\label{eq:y_1}
		&\md y_1^{i, \delta_i,\mathbf{v}} (t) =g_1\bigg(y_1^{i, \delta_i,\mathbf{v}}(t), \text{\footnotesize$\frac{1}{N-1}{\sum\limits_{j=1, j \neq i}^N \delta_{y_1^{j, \Delta_j,\mathbf{v}}(t)}}$}, v_1^{i, \delta_i}(t), y_0^{\mathbf{v}}\left(t-\delta_i\right)\bigg) \md t\\
        &\qquad\qquad+\sigma_1\bigg(y_1^{i, \delta_i,\mathbf{v}}(t), \text{\footnotesize$\frac{1}{N-1}{\sum\limits_{j=1, j \neq i}^N \delta_{y_1^{j, \Delta_j,\mathbf{v}}(t)}}$}, v_1^{i, \delta_i}(t), y_0^{\mathbf{v}}\left(t-\delta_i\right)\bigg) \md W_1^i(t), \quad t\in(0,T],\\
		&y_1^{i, \delta_i,\mathbf{v}}(0) =\xi_1^i .
	\end{aligned} \right.
\end{align}
Each Player $i$ together with the dominating player $0$ has the knowledge of the prior probability measure $\pi_{\Delta}$; and each Player $i$ only knows the magnitude of his own delay, all others' delay times are hidden random variables to himself. Equivalently, each player's delay is private information (hidden variable) to others, which resembles an adverse selection market. Consider the following cost coefficients
\begin{align*}
    &f_0:\RR^{n_0}\times\pr_2(\RR^{n_1})\times \RR^{p_0}\to\RR,\qquad h_0:\RR^{n_0}\times\pr_2(\RR^{n_1})\to\RR,\\
    &f_1:\RR^{n_1}\times\pr_2(\RR^{n_1})\times \RR^{p_1}\times\RR^{n_0}\to\RR,\qquad h_1:\RR^{n_1}\times\pr_2(\RR^{n_1})\times\RR^{n_0}\to\RR,
\end{align*}
whose regularity assumptions will be imposed in Assumption \ref{assump:lip}(iii). 
The cost objective functional for Player $0$ and Player $i$ are respectively given by 
\begin{align*}
    \mathcal{J}^{0, N}(\mathbf{v}):=\ & \mathbb{E}\bigg[ \int_0^T f_0\bigg(y^{\mathbf{v}}_0(t), \text{\footnotesize$ \frac{1}{N}\sum_{j=1}^N \delta_{y_1^{j, \Delta_j,\mathbf{v}}(t)}$}, v_0(t)\bigg)\md t+h_0\bigg(y_0(T), \text{\footnotesize$ \frac{1}{N}\sum_{j=1}^N \delta_{y_1^{j, \Delta_j,\mathbf{v}}(T)}$}\bigg)\bigg],\\
    \mathcal{J}^{i, \delta_i, N}(\mathbf{v}):=\ & \mathbb{E} \bigg[\int_0^T f_1\bigg(y_1^{i, \delta_i,\mathbf{v}}(t), \text{\footnotesize$\frac{1}{N-1}\sum_{j=1, j \neq i}^N \delta_{y_1^{j, \Delta_j,\mathbf{v}}(t)}$}, v_1^{i, \delta_i}(t), y_0^{\mathbf{v}}\left(t-\delta_i\right)\bigg) \md t \\
    &\quad + h_1\bigg(y_1^{i, \delta_i,\mathbf{v}}(T), \text{\footnotesize$ \frac{1}{N-1}\sum_{j=1, j \neq i}^N \delta_{y_1^{j, \Delta_j,\mathbf{v}}(T)}$}, y_0^{\mathbf{v}}\left(T-\delta_i\right)\bigg)\bigg].
\end{align*}
The dominating player and the $N$ followers minimize their cost functionals respectively. One notable distinction is that Player $0$ and Players $i$ ($1\le i\le N$) move sequentially, where Player $0$ moves first and Player $i$ ($1\le i\le N$) plays second. Then, the $N$-Player Stackelberg game can be formulated as the follows.

\begin{problem}[$N$-Player Stackelberg game] 
The problem consists the following two sequential optimization problems:
\begin{enumerate}[Step 1] \label{problem:N}
    \item (Nash games for the $N$ followers): for any given $v_0(\cdot)\in \mathcal{L}_{\mathcal{F}^0}^2\left([0, T] ; \mathbb{R}^{p_0}\right)$, search for a set of admissible strategies $\left\{\hat{v}_1^{i,\delta_i}[v_0](\cdot)\in \mathcal{L}_{\mathcal{G}^{i,\delta_i} }^2\left([0, T] ; \mathbb{R}^{p_1}\right),\ 1\le i\le N\right\}$, such that for any $1\le i\le N$, $\hat{v}_1^{i,\delta_i}[v_0]$ is optimal for Player $i$, given the other players’s strategies $\left\{v_0, \hat{v}_1^{i,\delta_i}[v_0],\ j\neq i \right\}$. In other words, 
    \begin{align}\label{def:N}
        &J^{i,\delta_i,N}\left(\mathbf{\hat{v}}[v_0]\right)=\min_{v_1^{i,\delta_1}} J^{i,\delta_i,N}\left(\mathbf{v}^i[v_0]\right),\quad 1\le i\le N,
    \end{align}
    where $\mathbf{\hat{v}}[v_0]:= \left\{v_0, {\hat{v}}^{i,\delta_i}_1[v_0],\ 1\le i\le N \right\}$, and $\mathbf{v}^i[v_0]:= \left\{v_0, v_1^{i,\delta_i}, {\hat{v}}^{j,\delta_j}_1[v_0],\ j\neq i \right\}$.
    \item (Stochastic control problem for the Leader): search for a control $\hat{v}_0(\cdot) \in \mathcal{L}_{\mathcal{F}^0}^2\left([0, T] ; \mathbb{R}^{p_0}\right)$, such that 
    \begin{align}\label{def:0}
        J^{0,N}\left(\mathbf{\hat{v}}\left[\hat{v}_0\right]\right)=\min_{v_0} J^{0,N} \left(\mathbf{\hat{v}}[v_0]\right).
    \end{align}
\end{enumerate}
If a set of controls $\mathbf{\hat{v}}\left[\hat{v}_0\right]:= \left\{\hat{v}_0, \hat{v}^{i,\delta_i}_1\left[\hat{v}_0\right],\ 1\le i\le N \right\}$ satisfies \eqref{def:N} and \eqref{def:0}, we call $\mathbf{\hat{v}}\left[\hat{v}_0\right]$ a solution of our $N$-player Stackelberg game. 
\end{problem}

In Step 1 of Problem~\ref{problem:N}, since the Player $i$ interacts with the population through the term $\frac{1}{N-1}\sum_{j=1, j \neq i}^N \delta_{y_1^{j, \Delta_j,\mathbf{v}}(t)}$, establishing an exact Nash equilibrium becomes challenging as $N$ grows large. Instead, we aim to search for an approximate solution of our $N$-player Stackelberg game, which is defined as follows; we also refer to \cite{MR3857463} for the same definition. 

\begin{definition}\label{def:equili}
\textbf{(1) $\epsilon$-Nash equilibrium.}  Given $v_0$, the collection of strategies \[\left\{u_1^{i,\delta_i}[v_0]\in \mathcal{U}_i(v_0),\ 1\le j\le N \right\}\] is said to constitute an $\epsilon$-Nash equilibrium for the $N$ followers  (with the given $v_0$) if
 \begin{align*}
        J^{i,\delta_i,N}\left(\mathbf{u}[v_0]\right)\leq \ & \min_{v_1^{i,\delta_i}\in \mathcal{U}_i(v_0)} J^{i,\delta_i,N}\left(\mathbf{v}^i[v_0]\right)+\epsilon,\quad 1\le i\le N,
    \end{align*}
     where ${\mathbf{v}^i}[v_0]:= \left\{u_0, v^{i,\delta_i}_1[v_0], u^{j,\delta_j}_1[v_0],\ j\neq i \right\}$ and $\mathbf{u}[v_0]:= \left\{v_0, u^{i,\delta_i}_1[v_0],\ 1\le i\le N \right\}$.

\textbf{(2) $(\epsilon_1,\epsilon_2)$-Stackelberg Nash equilibrium.}  
 The collection of admissible strategies \[\left\{u_0\in \mathcal{U}_0,u_1^{i,\delta_i}[u_0]\in \mathcal{U}_i(u_0),\ 1\le j\le N \right\}\] is called an $(\epsilon_1,\epsilon_2)$-Stackelberg Nash equilibrium, if $\left\{u_1^{i,\delta_i}[u_0]\in \mathcal{U}_i(u_0),\ 1\le j\le N \right\}$ constitutes an \(\epsilon_1\)-Nash equilibrium for the $N$ players and
    \begin{align*}
        J^{0,N}\left(\mathbf{u}[u_0]\right)\leq\ &\min_{v_0\in \mathcal{U}_0} J^{0,N} \left(\mathbf{u}[v_0]\right)+\epsilon_2.
    \end{align*}   
\end{definition}

Here, the admissible control set for the $i$-th Player $\mathcal{U}_i(v_0)\subset \mathcal{L}_{\mathcal{G}^{i,\delta_i} }^q\left([0, T] ; \mathbb{R}^{p_1}\right)$ with bounded $\mathcal{L}^q$ norm, see Definition~\ref{def:U_i} for details; and the the admissible control set for the leader  $\mathcal{U}_0\subset \mathcal{L}_{\mathcal{F}^0}^q\left([0, T] ; \mathbb{R}^{p_0}\right)$ with bounded $\mathcal{L}^q$ norm, see Definition~\ref{def:U_0} for details. In this article, we require the condition $q>4$ (stem from the usage of Lemma~\ref{lemma:wasserstien}) to obtain a precise convergence rate faster than $o(1)$. As long as the $o(1)$-convergence rate, $q=2$ is sufficient; see \cite{Bensoussan-Chau-Yam} for  proof of the constant diffusion case. 

Our method is to approximate the equilibrium by considering the $N\to \infty$ limit. As in \cite{Bensoussan-Chau-Yam}, we solve the optimality problem in this limit and use the solution to construct an $(\epsilon_1,\epsilon_2)$-Stackelberg Nash equilibrium, providing an efficient approximation for the large-scale system.

\subsubsection{Limiting mean field Stackelberg game problem}\label{subsec:Limit}

We now give the formulation of the limiting mean field Stackelberg game. When the term \text{\footnotesize$\frac{1}{N}\sum_{j=1}^N \delta_{y_1^{j, \Delta_j,\mathbf{v}}(t)}$} converge to some conditional distribution $z(t)$ (see Condition~\ref{def:z} below). The conditional distribution flow $z(\cdot)\in\mathcal{P}_2(\mR^{n_1})$ is assumed to be adapted to  $\mathcal{F}^0_{\cdot-a}$, and we denote by $\mathcal{F}_t^z$ the filtration generated by $z$. We set $\mathcal{G}_t^{i,\delta}:=\mathcal{F}_t^{1,i} \vee \mathcal{F}_{t-\delta}^0 \vee \mathcal{F}_t^z$. For a control $v_0(\cdot) \in \mathcal{L}_{\mathcal{F}^0}^2\left([0, T] ; \mathbb{R}^{p_0}\right)$ for Player $0$ and a control $v_1^{i,\delta_i}(\cdot)\in \mathcal{L}_{\mathcal{G}^{i,\delta_i} }^2\left([0, T] ; \mathbb{R}^{p_1}\right)$ for a representative player $i$, the controlled state processes are, respectively, described by
\begin{align}
&\left\{
	\begin{aligned}
		&\md x_0^{v_0,z} (t) = g_0\left(x_0^{v_0,z}(t), z(t), v_0(t)\right) \md t +\sigma_0 \left(x_0^{v_0,z}(t), z(t), v_0(t)\right) \md W_0(t) , \quad t\in(0,T],\\
		&x_0^{v_0,z}(t) =\xi_0(t), \quad t \in[-b, 0],
	\end{aligned}
\right. \label{eq:x_0}\\
&\left\{
	\begin{aligned}
		&\md x_1^{i,\delta_i,v_0,z,v_1} (t) =g_1\left(x_1^{i,\delta_i,v_0,z,v_1}(t), z(t), v_1^{i,\delta_i}(t), x_0^{v_0,z}\left(t-\delta_i\right)\right) \md t\\
        &\quad\qquad\qquad\qquad +\sigma_1  \left(x_1^{i,\delta_i,v_0,z,v_1}(t), z(t), v_1^{i,\delta_i}(t), x_0^{v_0,z}\left(t-\delta_i\right)\right) \md W_1^i(t) ,\\
		&x_1^{i,\delta_i,v_0,z,v_1}(0)  =\xi_1^i;\label{eq:x_1^i}
		\end{aligned}
\right. 
\end{align}
and the cost functional is given by
\begin{align}
    \mathcal{J}^{0}\left(v_0;z\right):=\ &\mathbb{E} \bigg[\int_0^T f_0\left(x_0^{v_0,z}(t), z(t), v_0(t)\right) \md t+h_0\left(x_0^{v_0,z}(T),z(T)\right)\bigg], \label{eq:J:MF:doni}\\
    \mathcal{J}^{i, \delta_i}(v_1^{i, \delta_i}; v_0,z):=\ & \mathbb{E}\bigg[ \int_0^T f_1\left(x_1^{i, \delta_i,v_0,z,v_1}(t), z(t), v_1^{i, \delta_i}(t), x_0^{v_0,z}\left(t-\delta_i\right)\right) \md t \notag \\
    &\qquad +h_1\left(x_1^{i, \delta_i,v_0,z,v_1}(T), z(T),  x_0^{v_0,z}\left(T-\delta_i\right)\right)\bigg]. \label{eq:J:MF}
\end{align}
Under assumptions stated in the following section, the controlled processes in \eqref{eq:x_0} and \eqref{eq:x_1^i} satisfy $x_0^{v_0,z}(\cdot) \in \lr_{\mathcal{F}^0}^2\left([-b, T] ; \mathbb{R}^{n_0}\right)$ and $x_1^{i,\delta,v_0,z,v_1} \in \lr_{\mathcal{G}^{i,\delta}}^2\left([0, T] ; \mathbb{R}^{n_1}\right)$. The limiting Stackelberg game can be formulated as follows.

\begin{problem}[Mean field Stackelberg game] \label{problem:Limit}
The problem consists the following two sequential optimization problems:
\begin{enumerate}[Step 1]
    \item (Mean field limiting Nash game for followers): for any given $v_0(\cdot)\in \lr_{\mathcal{F}^0}^q\left([0, T] ; \mathbb{R}^{p_0}\right)$, \\
    (i) given the conditional distribution flow $z\in \lr^2_{\F_{\cdot-a}^0} ([0,T];\pr_2(\RR^{n_1}))$, find an optimal control $u_1^{i,\delta_i}[v_0,z]$ of the following problem 
    \begin{align*}
        &\inf_{v_1^{i,\delta_i}} \mathcal{J}^{i, \delta_i}\left(v_1^{i, \delta_i}; v_0,z\right);
    \end{align*}
    (ii) search for a conditional distribution flow $z[v_0]$, such that the state process $x_1^{i,\delta_i,v_0,z,u_1}(\cdot)$ corresponding to the optimal control $u_1^{i,\delta_i}[v_0,z[v_0]]$ (which we simply denote by $u_1^{i,\delta_i}[v_0]$) satisfies the following consistent condition:
    \begin{condition}[Fixed Point Property] \label{def:z} 
        $\qquad z[v_0](t)=\int_{[a,b]}\mathbb{P}^{\text{\tiny $\F_{t-\delta}^0 \vee \F_{t}^z$}}_{\text{\tiny $x_1^{i,\delta,v_0,z,u_1}(t)$}} \md\pi_{\Delta}(\delta)$.
    \end{condition}
    \item (Stochastic control problem for leader): search for a control $u_0(\cdot) \in \lr_{\mathcal{F}^0}^q\left([0, T] ; \mathbb{R}^{p_0}\right)$, such that 
    \begin{align*}
        J^{0}\left(u_0;z[u_0]\right)=\min_{v_0} J^{0} \left(v_0,z[v_0]\right).
    \end{align*}
\end{enumerate}
If a triple $\left(u_0,z[u_0],u^{i,\delta_i}_1[v_0]\right)$ satisfies the above conditions, then we call $\left(u_0,z[u_0],u^{i,\delta_i}_1[v_0]\right)$ a solution of the limiting mean field Stackelberg game. 
\end{problem}

\begin{remark}
   Note that the definition of $z[v_0]$ in Condition~\ref{def:z} is a conditional distribution flow, rather than a conditional expectation flow. This arises from our assumption that the $i$-th player interacts with others through the empirical distribution rather than the mean of their states; for comparison, in \cite{Bensoussan-Chau-Yam}, $z$ is defined as a conditional expectation flow. Besides, we are the first to simultaneously incorporate the following features (and the first to incorporate the (iii)-th feature) into $N$-player Stackelberg game:   (i) the information available to the followers may be subject to different magnitudes of  delays; (ii) the followers interact with each other through the empirical distribution, rather than only through the mean of the states; (iii) the drift and diffusion coefficients of the state processes for both the leader and the followers are allowed to depend on the state, the control, and the distribution.  
\end{remark}

The aim of this article is to show that a solution of Problem~\ref{problem:Limit} can provide an approximate solution of Problem~\ref{problem:N} in view of Definition~\ref{def:equili}, that is, 

\begin{problem}\label{eq:aim}
    Suppose that $\left(u_0,z[u_0],u^{i,\delta_i}_1[u_0]\right)$ is a solution of Problem~\ref{problem:Limit}. Show that the strategies $\left\{u_0,u^{i,\delta_i}_1[u_0],\ 1\le i\le N\right\}$ is an $(\epsilon_1(N),\epsilon_2(N))$-Stackelberg Nash equilibrium for Problem~\ref{problem:N}, and establish the convergence rates of $\epsilon_1(N)$ and $\epsilon_2(N)$ as $N\to+\infty$.
\end{problem}

For notational convenience, in the rest of this article, we adopt the following notations
\begin{enumerate}
    \item  For any feasible $v_0$ for Player $0$, we always denote by $\left(z[v_0],u_1^{i,\delta_i}[v_0]\right)$ the solution of Step 1 of Problem~\ref{problem:Limit}  , and denote by $\left(x^{v_0}_0,x^{i,\delta_i,v_0}_1\right)$ the corresponding state processes, which satisfies Condition~\ref{def:z}, written as 
    \begin{align}\label{def:z'} 
        z[v_0](t)=\int_{[a,b]}\mathbb{P}^{\text{\tiny$\F_{t-\delta}^0 \vee \F_{t}^z$}}_{\text{\tiny $x_1^{i,\delta,v_0}(t)$}}\md\pi_{\Delta}(\delta);
    \end{align}
    while for any other feasible control $v_1^{i,\delta_i}$, we denote by $\left(x^{v_0}_0,x^{i,\delta_i,v_0,v_1}_1(\cdot)\right)$ the state processes corresponding to $\left(v_0,z[v_0],v_1^{i,\delta_i}\right)$. 
    \item  We denote by $u_0$ the solution of Step 2 of Problem~\ref{problem:Limit}, and denote by $\left(x_0,x^{i,\delta_i}_1\right)$ the state processes corresponding to $\left(z[u_0],u_1^{i,\delta_i}[u_0]\right)$ . 
    \item For any feasible control $v_0$ for Player $0$, we denote by the set of controls $\mathbf{u}[v_0]:= \left\{v_0, u^{j,\delta_j}_1[v_0],\ 1\le j\le N \right\}$ and denote by $\left\{y_0^{\mathbf{u}[v_0]},y_1^{j, \delta_j,\mathbf{u}[v_0]},\ 1\le j\le N\right\}$ the corresponding state processes; and for any other control $v^{1,\delta_i}_1$ for Player $i$, we denote by the  set of control ${\mathbf{v}^i}[v_0]:= \left\{v_0, v_1^{i,\delta_i}, u^{j,\delta_j}_1[u_0],\ j\neq i \right\}$, and denote by $\left\{y_0^{{\mathbf{v}^i}[v_0]},y_1^{j, \delta_j,{\mathbf{v}^i}[v_0]},\ 1\le j\le N\right\}$ the corresponding state processes. We denote by the  set of control $\mathbf{u}:= \left\{u_0, u^{j,\delta_j}_1[u_0],\ 1\le j\le N \right\}$, and denote by $\left\{y_0^{\mathbf{u}},y_1^{j, \delta_j,\mathbf{u}},\ 1\le j\le N \right\}$ the corresponding state processes. 
\end{enumerate}

\section{Assumptions and Estimates for Controlled SDEs in Section~\ref{sec:formulation}}\label{sec:assumption&SDE}

Our assumptions on the initial conditions are as follows.

\begin{assumption}[for the initials]\label{assumption:initial}
(i)  The initial conditions $\xi_0$ and  $\left\{\xi_1^j,\ 1\le j\le N\right\}$ satisfy
\begin{align}\label{assum:xi_0_bound}
    \e\left[\sup_{t\in[-b,0]}|\xi_0(t)|^q\right]<\infty,\qquad \e\left[\left|\xi_1^j\right|^q \right]<\infty,
\end{align}    
for some $q>4$. \\
(ii) The initial path, $\left\{\xi_0(t): t \in[-b, 0]\right\}$, satisfies the average Hölder continuity, such that there exists $L>0$ and $\tilde{q}\geq \frac{q-2}{q}$,
\begin{align}\label{assum:xi_0_conti}
    \mathbb{E}\left[\left|\xi_0(t)-\xi_0(s)\right|^2 \right]\leq L|t-s|^{\tilde{q}}, \quad t, s \in[-b, 0] .
\end{align} 
\end{assumption}

We refer the readers to the H\"older continuity \eqref{C.2.} for the state process in the parameter $\delta$, that is why we only need the power $\tilde{q}\geq \frac{q-2}{q}$ on the right hand side of \eqref{assum:xi_0_conti}. Our Condition \eqref{assum:xi_0_conti} is weaker than the commonly used one, i.e., $\tilde{q}=1$, see \cite{Bensoussan-Chau-Yam} for instance. The weaker assumption \eqref{assum:xi_0_conti} is enough for this article since we assume the boundedness condition \eqref{assum:xi_0_bound} with $q>4$; see \textit{a priori} estimate \eqref{C.2._proof_2} for details. 

\begin{remark}
    As a particular case, if the diffusion coefficient $\sigma_0$ of the dominating player is independent of the control variable, we can simply take $\tilde{q}=1$ in \eqref{assum:xi_0_conti}, then, we can obtain an even better estimate \eqref{better_estimate} than \eqref{C.2.}, by then, a faster convergence rate can be established; see Remark~\ref{remark:sigma_0} for details.
\end{remark}

We now impose some standard assumptions on the drift, diffusion and cost coefficient functions in the SDEs. For notational simplicity, we use the same constant $L$ below. 

\begin{assumption}[For the coefficients]\label{assump:lip}
For any $x_0,x_0'\in\mR^{n_0}; x_1,x_1'\in\mR^{n_1}; v_0,v_0'\in\mR^{p_0}; v_1,v_1'\in\mR^{p_1}$ and $z,z'\in\mathcal{P}_2(\mR^{n_1})$, we assume the following:\\
 	(i)	Lipschitz continuity. The drift coefficients $g_0$ and $g_1$  and the diffusion coefficients $\sigma_0$ and $\sigma_1$ are globally Lipschitz continuous in all arguments, i.e., there exists $L>0$, such that
 		$$
 		\begin{aligned}
 			&\mid g_0\left(x_0, z, v_0\right)-  g_0\left(x_0^{\prime}, z^{\prime}, v_0^{\prime}\right) \mid  + \mid \sigma_0\left(x_0, z, v_0\right)-  \sigma_0\left(x_0^{\prime}, z^{\prime}, v_0^{\prime}\right)  \mid \\
 			\le\ & L\left(\left|x_0-x_0^{\prime}\right|+W_2(z,z')+\left|v_0-v_0^{\prime}\right|\right), \\
 			&\mid g_1\left(x_1, z, v_1, x_0\right)-  g_1\left(x_1^{\prime}, z^{\prime}, v_1^{\prime}, x_0^{\prime}\right)  + \mid \sigma_1\left(x_1, z, v_1, x_0\right)-  \sigma_1\left(x_1^{\prime}, z^{\prime}, v_1^{\prime}, x_0^{\prime}\right)  \\
 			\le\ & L\left(\left|x_1-x_1^{\prime}\right|+W_2(z,z')+\left|v_1-v_1^{\prime}\right|+\left|x_0-x_0^{\prime}\right|\right)
 		\end{aligned}
 		$$ 	
    (ii) Linear growth. The drift coefficients $g_0$ and $g_1$ and the diffusion coefficients $\sigma_0$ and $\sigma_1$ are of linear growth in all arguments, i.e., there exists $L>0$, such that
$$
\begin{aligned}
	\left|g_0\left(x_0, z, v_0\right)\right|  + \left|\sigma_0\left(x_0, z, v_0\right)\right| & \leq L\left(1+\left|x_0\right|+\mathcal{M}_2(z)+\left|v_0\right|\right) \\
	\left|g_1\left(x_1, z, v_1, x_0\right)\right|  + \left|\sigma_1\left(x_1, z, v_1, x_0\right)\right| & \leq L\left(1+\left|x_1\right|+\mathcal{M}_2(z)+\left|v_1\right|+\left|x_0\right|\right)
\end{aligned}
$$
	(iii)	Quadratic condition. There exists $L>0$ such that
        	\begin{align*}
			\left|f_0\left(x_0, z, v_0\right)-f_0\left(x_0^{\prime}, z^{\prime}, v_0^{\prime}\right)\right| \leq & L\left[1+\left|x_0\right|+\left|x_0^{\prime}\right|+\mathcal{M}_2(z)+\mathcal{M}_2(z')+\left|v_0\right|+\left|v_0^{\prime}\right|\right] \\
			& \cdot\left[\left|x_0-x_0^{\prime}\right|+W_2(z,z^{\prime})+\left|v_0-v_0^{\prime}\right|\right], \\
			\left|f_1\left(x_1, z, v_1, x_0\right)-f_1\left(x_1^{\prime}, z^{\prime}, v_1^{\prime}, x_0^{\prime}\right)\right| \leq & L\left[1+\left|x_1\right|+\left|x_1^{\prime}\right|+\mathcal{M}_2(z)+\mathcal{M}_2(z^{\prime})+\left|v_1\right|+\left|v_1^{\prime}\right|+\left|x_0\right|+\left|x_0^{\prime}\right|\right] \\
			& \cdot\left[\left|x_1-x_1^{\prime}\right|+W_2(z,z^{\prime})+\left|v_1-v_1^{\prime}\right|+\left|x_0-x_0^{\prime}\right|\right],\\
            	\left|h_0\left(x_0, z\right)-h_0\left(x_0^{\prime}, z^{\prime}\right)\right| \leq & L\left[1+\left|x_0\right|+\left|x_0^{\prime}\right|+\mathcal{M}_2(z)+\mathcal{M}_2(z')\right] \\
			& \cdot\left[\left|x_0-x_0^{\prime}\right|+W_2(z,z^{\prime})\right],\\
            	\left|h_1\left(x_1, z, x_0\right)-h_1\left(x_1^{\prime}, z^{\prime}, x_0^{\prime}\right)\right| \leq & L\left[1+\left|x_1\right|+\left|x_1^{\prime}\right|+\mathcal{M}_2(z)+\mathcal{M}_2(z')+\left|x_0\right|+\left|x_0^{\prime}\right|\right] \\
			& \cdot\left[\left|x_1-x_1^{\prime}\right|+W_2(z,z^{\prime})+\left|x_0-x_0^{\prime}\right|\right]. 
		\end{align*}
\end{assumption}

\begin{remark}
For a coefficient $g:\RR^{n_1}\times \RR^{n_1}\times  \RR^{d_1}\times \RR^{n_0} \ni (x_1,y,v_1,x_0)\mapsto g(x_1,y,v_1,x_0)\in\RR^{n_1} $ which satisfies the $L$-Lipschitz continuous condition in all its arguments (see \cite{Bensoussan-Chau-Yam} for instance), we define the map $G:\RR^{n_1}\times \pr_2\left(\RR^{n_1}\right)\times\RR^{d_1}\times \RR^{n_0}\to\RR^{n_1}$ as follows 
\begin{align*}
    G(x_1,z,v_1,x_0):= g \left( x_1,\int_{\RR^{n_1}} y\ z(\md y),v_1,x_0\right).
\end{align*}
Then, from the continuity of $g$, we have
\begin{align*}
    &|G(x'_1,z',v'_1,x'_0)-G(x_1,z,v_1,x_0)|\\
    =\ & \left| g \left( x'_1,\int_{\RR^{n_1}} y\ z'(\md y),v'_1,x'_0\right) - g \left( x_1,\int_{\RR^{n_1}} y\ z(\md y),v_1,x_0\right) \right| \\
    \le\ & L \bigg(|x'_1-x_1|+\left|\int_{\RR^{n_1}} y\ z'(\md y)-\int_{\RR^{n_1}} y\ z(\md y) \right|+|v'_1-v_1|+|x'_0-x_0| \bigg).
\end{align*}
Note from \cite[Section 2]{MR3357611} that
\begin{align*}
    \left|\int_{\RR^{n_1}} y\ z'(\md y)-\int_{\RR^{n_1}} y\ z(\md y) \right|
    \le\ & W_2(z,z').
\end{align*} 
Therefore, we know that the map $G$ satisfies the Lipschitz-continuity condition (of $g_1$) in Assumption~\ref{assump:lip}. In a simialr way, it can be shown that $G$ satisfies the growth condition in Assumption~\ref{assump:lip}. 
\end{remark}

For some $q>4$, we work in the space $\mathcal{L}_{\mathcal{F}^0}^q\left([0, T] ; \mathbb{R}^{p_0}\right)\subset \mathcal{L}_{\mathcal{F}^0}^2\left([0, T] ; \mathbb{R}^{p_0}\right)$ for controls $v_0(\cdot)$ for Player $0$. By appropriate assumptions, we can show that Step 1 of Problem~\ref{problem:Limit} has a unique optimal control with the following properties. Here, we simply make this condition, which will be proven in detail in an alternative article. 

\begin{hypothesis}[for optimal control]\label{assumption:integral}
(i) For any feasible control $v_0\in \mathcal{L}_{\mathcal{F}^0}^q\left([0, T] ; \mathbb{R}^{p_0}\right)$, the solution $u_1^{j,\delta}[v_0](\cdot)$ for Step 1 of Problem~\ref{problem:Limit} satisfies
\begin{align*}
    \sup_{\delta\in[a,b]}\e\left[\int_0^T \left|u_1^{i,\delta}[v_0](s) \right|^q \md s\right]<+\infty.
\end{align*}
(ii) There exists $l_{u_1[v_0]} > 0$ such that the mapping $[a,b] \ni \delta \mapsto u_1^{i,\delta}[v_0](\cdot)\in \mathcal{L}_{\mathcal{G}^{i,\delta} }^q\left([0, T] ; \mathbb{R}^{p_0}\right)$ is H\"older continuous, that is,
\begin{align*}
    \e \left[\int_0^T \left|u_1^{i,\delta}[v_0](s) - u_1^{i,\gamma}[v_0](s)\right|^2 \md s\right] \leq l_{u_1[v_0]}|\delta - \gamma|^{\frac{q-2}{q}}, \quad \forall \delta, \gamma \in [a,b].
\end{align*}
\end{hypothesis}

We next give the boundedness estimates for the state processes for Problem~\ref{problem:N} and Problem~\ref{problem:Limit}, which will be used in the following sections. The proof of the following result is given in Appendix ~\ref{prop:proof:moment}.

\begin{lemma}\label{prop:moment}
(i) Under Assumptions~\ref{assumption:initial}-(i) and \ref{assump:lip} and Hypothesis~\ref{assumption:integral}-(i) with $q\geq 2$, we have the following estimates:\\
(i)(a) For any $v_0\in \mathcal{L}_{\mathcal{F}^0}^2\left([0, T] ; \mathbb{R}^{p_0}\right)$ and $v_1^{i,\delta_i}\in \lr_{\mathcal{G}^{i,\delta_i}}^2\left([0, T] ; \mathbb{R}^{n_1}\right)$, the state processes $\left(x^{v_0}_0,x^{i,\delta_i,v_0,v_1}_1(\cdot)\right)$ satisfy
\begin{align}
   \left\|x^{v_0}_0\right\|_{\sr^2(-b,T)}^2\le\ & C(L,T) \bigg[ 1+ \left\|\xi_0\right\|^2_{\sr^2(-b,0)} + \left\|\xi^i_1\right\|_2^2 +  \|v_0\|_{\lr^2}^2 + \sup_{\delta\in[a,b]} \left\|u_1^{i, \delta}[v_0]\right\|_{\lr^2}^2  \bigg], \label{C.1.-1'} \\
    \sup_{\delta\in[a,b]} \left\|x_1^{i, \delta, v_0,v_1}\right\|^2_{\sr^2} \le\ & C(L,T) \bigg[ 1+\left\|\xi_0\right\|_{\sr^2(-b,0)}^2 + \left\|\xi^i_1\right\|_2^2 + \left\|v_0\right\|_{\lr^2}^2   + \sup_{\delta\in[a,b]}\left\|u_1^{i, \delta}[v_0]\right\|_{\lr^2}^2 + \sup_{\delta\in[a,b]} \left\|v_1^{i, \delta}\right\|_{\lr^2}^2  \bigg],\label{C.1.-1''}
\end{align}
and particularly, for the state process $x^{i,\delta_i,v_0}_1(\cdot)$ corresponding to the optimal control $u_1^{i, \delta}[v_0]$ satisfies
\begin{align}
    \sup_{\delta\in[a,b]} \left\|x_1^{i, \delta, v_0}\right\|_{\sr^2}^2 \le\ & C(L,T) \bigg[ 1+\left\|\xi_0\right\|_{\sr^2(-b,0)}^2+ \left\|\xi^i_1\right\|_2^2 + \left\|v_0\right\|_{\lr^2}^2   + \sup_{\delta\in[a,b]}\left\|u_1^{i, \delta}[v_0]\right\|_{\lr^2}^2 \bigg]. \label{C.1.hat.x_1}
\end{align}
(i)(b) For the set of controls ${\mathbf{v}^i}[v_0]:= \left\{v_0, v_1^{i,\delta_i}, u^{j,\delta_j}_1[u_0],\ j\neq i \right\}$, the state processes $\left\{y_0^{{\mathbf{v}^i}[v_0]},y_1^{j, \delta_j,{\mathbf{v}^i}[v_0]},\ 1\le j\le N\right\}$ satisfy
\begin{align}
    &\left\|y_0^{{\mathbf{v}^i}[v_0]}\right\|^2_{\sr^2(-b,T)}+\sup_{\delta\in[a,b]} \left\|y_1^{j, \delta,{\mathbf{v}^i}[v_0]}\right\|^2_{\sr^2} \notag \\
    \le\ & C(L,T) \bigg[ 1+\left\|\xi_0(t)\right\|_{\sr^2(-b,0)}^2 + \left\|\xi^j_1\right\|_2^2 + \|v_0\|_{\lr^2}^2  +\sup_{\delta\in[a,b]}\left\|u_1^{j, \delta}[v_0]\right\|^2_{\lr^2} + \frac{1}{N-1}\sup_{\delta\in[a,b]} \left\|v_1^{i, \delta}(t)\right\|_{\lr^2}^2  \bigg], \label{C.1.-2'} \\
    &\sup_{\delta\in[a,b]} \left\|y_1^{i, \delta,{\mathbf{v}^i}[v_0]}\right\|^2_{\sr^2} \notag \\
    \le\ & C(L,T) \bigg[ 1+\left\|\xi_0(t)\right\|_{\sr^2(-b,0)}^2 + \left\|\xi^j_1\right\|_2^2 + \|v_0\|_{\lr^2}^2  +\sup_{\delta\in[a,b]}\left\|u_1^{j, \delta}[v_0]\right\|^2_{\lr^2} + \sup_{\delta\in[a,b]} \left\|v_1^{i, \delta}(t)\right\|_{\lr^2}^2  \bigg]. \label{C.1.-2''}
\end{align}
Here, in \eqref{C.1.-2'} and \eqref{C.1.-2''}, we recall the notations stated at the end of Section~\ref{sec:formulation}: Player $i$ adopts an arbitrary control $v_1^{i,\delta_i}$, and the other followers (we simply use Player $j$ to denote a representative) adopts the optimal control $u_1^{j,\delta_j}[v_0]$. \\
(ii) Furthermore, for $q>4$, suppose that Assumption~\ref{assumption:initial}-(i) and Hypothesis~\ref{assumption:integral}-(i) hold, we have
\begin{align}
    \left\|x^{v_0}_0\right\|^q_{\sr^q(-b,T)} + \sup_{\delta\in[a,b]} \left\|x_1^{i, \delta, v_0}\right\|^q_{\sr^q} \le\ & C(L,T,q) \bigg[ 1+ \|\xi_0\|_{\sr^q(-b,0)}^q + \left\|\xi^i_1\right\|_q^q + \|v_0\|_{\lr^q}^q + \sup_{\delta\in[a,b]} \left\|u_1^{i, \delta}[v_0]\right\|^q_{\lr^q}  \bigg]. \label{estimate:SDE:Lq-1} 
\end{align}
\end{lemma}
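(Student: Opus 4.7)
\textbf{Plan for Lemma \ref{prop:moment}.} My approach is standard stochastic-analysis moment estimation via the Burkholder--Davis--Gundy (BDG) inequality followed by Gronwall's lemma, but the delay, the $W_2$-coupling through the (conditional) distribution flow, and the fixed-point characterization \eqref{def:z'} of $z[v_0]$ require care. For part (i)(a), the central observation is that, since $z[v_0](t)$ is a mixture of regular conditional distributions of $x_1^{i,\delta,v_0}(t)$, one has
\begin{align*}
\mathcal{M}_2^2\bigl(z[v_0](t)\bigr) \;=\; \int_{[a,b]} \mathbb{E}\bigl[\,|x_1^{i,\delta,v_0}(t)|^2 \,\bigm|\, \mathcal{F}^0_{t-\delta}\vee \mathcal{F}^z_t\bigr]\,\md\pi_\Delta(\delta),
\end{align*}
so that after taking expectation one obtains $\mathbb{E}[\mathcal{M}_2^2(z[v_0](t))]\le \sup_{\delta\in[a,b]}\mathbb{E}[|x_1^{i,\delta,v_0}(t)|^2]$. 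I would apply Itô's formula to $|x_0^{v_0}|^2$ and to $|x_1^{i,\delta,v_0,v_1}|^2$, invoke BDG and the linear-growth bound in Assumption~\ref{assump:lip}(ii) to replace the $W_2$ terms by $\mathcal{M}_2^2$ of the distribution argument, then take suprema in $t$ and in $\delta\in[a,b]$. This yields a coupled pair of integral inequalities for the two $\mathcal{S}^2$-norms in which the $x_0$-equation feeds $\sup_\delta \mathbb{E}|x_1^{i,\delta,v_0}|^2$ into $x_0$ (through the $z[v_0]$ term) and the $x_1$-equation feeds $\|x_0^{v_0}\|_{\mathcal{S}^2}^2$ into $x_1$ (through the delayed value $x_0^{v_0}(t-\delta)$, which is trivially bounded by $\sup_{s\in[-b,t]}|x_0^{v_0}(s)|$). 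A single application of Gronwall's lemma to the sum then produces \eqref{C.1.-1'}--\eqref{C.1.hat.x_1}, using Hypothesis~\ref{assumption:integral}(i) to absorb the uniform-in-$\delta$ $\mathcal{L}^q$ bound on $u_1^{i,\delta}[v_0]$.

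For part (i)(b), the structural difference is that the coupling measure is no longer a conditional distribution but an empirical one, which simplifies things: $\mathcal{M}_2^2\bigl(\tfrac1N\sum_{j=1}^N \delta_{y_1^{j,\Delta_j,\mathbf{v}^i[v_0]}(t)}\bigr)=\tfrac1N\sum_{j=1}^N |y_1^{j,\Delta_j,\mathbf{v}^i[v_0]}(t)|^2$, with the analogous identity for the $(N-1)$-sum appearing in the $i$-th follower equation. Applying Itô and BDG as before to $y_0^{\mathbf{v}^i[v_0]}$ and to each $y_1^{j,\delta_j,\mathbf{v}^i[v_0]}$, then averaging the follower estimates over $j\neq i$ (for the leader's equation) and retaining the single $i$-th estimate (for \eqref{C.1.-2''}), gives a system closable by Gronwall. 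The contribution of the deviating control $v_1^{i,\delta_i}$ to the leader enters only through a factor of $1/N$ inside the average, which is exactly how it surfaces on the right-hand side of \eqref{C.1.-2'}; the factor disappears in \eqref{C.1.-2''}, where $v_1^{i,\delta_i}$ drives its own follower's SDE directly. Again Hypothesis~\ref{assumption:integral}(i) is used to dominate the $\mathcal{L}^2$-norms of the optimal controls $u_1^{j,\delta_j}[v_0]$ uniformly in both $j$ and $\delta_j$.

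For part (ii), I would repeat the argument above after applying Itô to $|x_0^{v_0}|^q$ and $|x_1^{i,\delta,v_0}|^q$ and using the $\mathcal{L}^q$-version of BDG, with the same mixture-of-conditional-distributions identity giving $\mathbb{E}[\mathcal{M}_2^q(z[v_0](t))]\le \sup_\delta \mathbb{E}|x_1^{i,\delta,v_0}(t)|^q$ via Jensen; Gronwall at order $q$ then delivers \eqref{estimate:SDE:Lq-1}. The main technical obstacle throughout is handling the $W_2$ terms: unlike the pure $N$-player case, the mean-field $W_2$ arguments cannot be rewritten as empirical averages, so one must pass through the fixed-point Condition~\ref{def:z'} and the conditional-expectation identity above to obtain an estimate in terms of unconditional moments of $x_1^{i,\delta,v_0}$ uniform in $\delta$. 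A secondary bookkeeping nuisance is keeping track of which estimate needs Hypothesis~\ref{assumption:integral}(i) and which only needs the control $v_0$ (or $v_1^{i,\delta_i}$) to be in $\mathcal{L}^2$ (respectively $\mathcal{L}^q$), but this is routine once the Gronwall step is set up correctly.
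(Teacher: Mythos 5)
Your proposal is correct and follows essentially the same route as the paper's proof in Appendix B.1: the linear-growth/standard SDE estimates (your Itô+BDG derivation is just the explicit form of these), the key mixture-of-conditional-distributions identity giving $\mathbb{E}[\mathcal{M}_2^2(z[v_0](t))]\le\sup_{\delta\in[a,b]}\|x_1^{i,\delta,v_0}(t)\|_2^2$, the empirical second-moment identity with the exchangeability of the $j\neq i$ players producing the $1/N$ factor on the deviating control in \eqref{C.1.-2'}, and Gr\"onwall on the coupled system (the paper merely orders the Gr\"onwall steps slightly differently, first closing the self-referential optimal-control estimate \eqref{C.1.hat.x_1} before substituting back). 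No gaps.
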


\begin{remark}
In the proof of Lemma \ref{prop:moment}-(i), we have used the following two estimates:
\begin{align}
    \e\left[\mathcal{M}_2^2(z[v_0](t))\right] \le\ & \sup_{\delta\in[a,b]} \left\|x_1^{i, \delta, v_0}(t)\right\|^2_2,\label{eq:z:estimate}\\
    \mE\bigg[\mathcal{M}_2^2\bigg(\text{\footnotesize$\frac{1}{N-1}\sum_{j \neq i} \delta_{y_1^{j, \Delta_j, {\mathbf{v}^i}[v_0]}(t)}$}\bigg)\bigg] \le\ & \sup_{\delta\in[a,b]}  \left\|y_1^{j, \delta, {\mathbf{v}^i}[v_0]}(t)\right\|^2_2 ,\label{eq:y/N:estimate}
\end{align}
which will be used repeatedly in what follows.
\end{remark}

We also provide the estimate on the continuity of $x_1^{i, \delta, v_0}(t)$ in $\delta$, which will be used in Subsubsection~\ref{subsubsec:General_Delta}. The proof of the following result is given in Appendix \ref{proof:prop:holder}.
 
\begin{lemma}\label{prop:holdercontinuity}

Under  Assumptions~\ref{assumption:initial} and \ref{assump:lip} and Hypothesis~\ref{assumption:integral}, we have the following H\"older continuity:
\begin{align}\label{C.2.}
    &\sup_{s\in[0,T]}\left\|x_1^{i,\delta,v_0}(s)-x_1^{i,\gamma,v_0}(s)\right\|^2_2 \notag \\
    \le\ & C\left(L,T,\xi_0,\xi^i_1,q\right)\bigg(1+l_{u_1[v_0]}+\|v_0\|_{\mathcal{L}^q}^2+\sup_{\delta\in[a,b]}\left\|u_1^{i, \delta}[v_0]\right\|_{\mathcal{L}^2}^2\bigg)(\gamma-\delta)^{\frac{q-2}{q}},\quad \forall\,\delta,\gamma\in[a,b].
\end{align}
\end{lemma}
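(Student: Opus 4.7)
The plan is to write $\Delta x(t):= x_1^{i,\delta,v_0}(t) - x_1^{i,\gamma,v_0}(t)$ as the solution of a linear SDE with zero initial condition (both processes start from $\xi_1^i$) and to reduce matters to Gronwall's lemma. Because $z[v_0]$ is common to both processes, the inhomogeneities arise only from the control terms $u_1^{i,\delta}[v_0](t)$ versus $u_1^{i,\gamma}[v_0](t)$ and from the shifted leader states $x_0^{v_0}(t-\delta)$ versus $x_0^{v_0}(t-\gamma)$. Applying It\^o's formula to $|\Delta x(t)|^2$, taking expectation, and invoking the Lipschitz property of $g_1,\sigma_1$ in Assumption~\ref{assump:lip}-(i) exactly as in the proof of Lemma~\ref{prop:moment}, I obtain
\begin{align*}
\mE\bigl[|\Delta x(s)|^2\bigr] \le C(L,T)\!\int_0^s\!\!\Bigl\{\mE|\Delta x(t)|^2 + \mE\bigl|u_1^{i,\delta}[v_0](t) - u_1^{i,\gamma}[v_0](t)\bigr|^2 + \mE\bigl|x_0^{v_0}(t-\delta) - x_0^{v_0}(t-\gamma)\bigr|^2\Bigr\}\md t,
\end{align*}
so by Gronwall it suffices to bound the integrals of the two source terms by a constant multiple of $|\gamma-\delta|^{(q-2)/q}$.

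The control-difference integral is given at once by Hypothesis~\ref{assumption:integral}-(ii) and is precisely the source of the $l_{u_1[v_0]}$ factor appearing in \eqref{C.2.}. For the leader-shift integral I would first establish a H\"older-in-time estimate of the form
\begin{align*}
\sup_{\substack{r,s\in[-b,T]\\|r-s|\le h}}\!\mE\bigl[|x_0^{v_0}(r)-x_0^{v_0}(s)|^2\bigr] \le C(L,T,\xi_0,q)\bigl(1+\|v_0\|_{\lr^q}^2+\sup_{\delta\in[a,b]}\|u_1^{i,\delta}[v_0]\|_{\lr^2}^2\bigr)\,h^{(q-2)/q}.
\end{align*}
On $[-b,0]$ this is exactly Assumption~\ref{assumption:initial}-(ii), and the stipulation $\tilde q\ge(q-2)/q$ is calibrated so that the initial segment does not spoil the overall exponent. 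On $[0,T]$ a standard BDG bound applied to \eqref{eq:x_0} combined with the linear growth of $g_0,\sigma_0$ in Assumption~\ref{assump:lip}-(ii) gives $\mE|x_0^{v_0}(t)-x_0^{v_0}(s)|^2 \le C\int_s^t\bigl(1+\mE|x_0^{v_0}(u)|^2+\mE\mathcal{M}_2^2(z[v_0](u))+\mE|v_0(u)|^2\bigr)\md u$; the first three terms are uniformly bounded thanks to \eqref{C.1.-1'}, \eqref{eq:z:estimate}, and \eqref{estimate:SDE:Lq-1}, and the control term is reduced by H\"older's inequality in time, $\mE\int_s^t|v_0|^2\md u\le|t-s|^{(q-2)/q}\|v_0\|_{\lr^q}^2$. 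The mixed range $t-\gamma<0<t-\delta$ is handled by splitting $x_0^{v_0}(t-\delta)-x_0^{v_0}(t-\gamma) = [x_0^{v_0}(t-\delta)-\xi_0(0)]+[\xi_0(0)-\xi_0(t-\gamma)]$ and noting that $|t-\delta|,|t-\gamma|\le|\gamma-\delta|$.

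The main obstacle, and the sole source of the exponent $(q-2)/q$ in the final estimate, is the control-dependence of $\sigma_0$: if $\sigma_0$ were independent of $v_0$, the BDG step would yield the linear modulus $\mE|x_0^{v_0}(t)-x_0^{v_0}(s)|^2\le C|t-s|$, and one could take $\tilde q=1$ in Assumption~\ref{assumption:initial}-(ii), producing the sharper H\"older rate alluded to in the remark preceding the lemma. Consuming the $\lr^q$-integrability of $v_0$ via H\"older's inequality is the step that costs the exponent; once the time-regularity of $x_0^{v_0}$ is in hand, the remainder is a routine Gronwall assembly.
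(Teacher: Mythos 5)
Your proposal is correct and follows essentially the same route as the paper's proof in Appendix~\ref{proof:prop:holder}: a Gr\"onwall estimate on the difference of the two follower states, with the control-difference term absorbed via Hypothesis~\ref{assumption:integral}-(ii) and the leader-shift term controlled by a $(q-2)/q$-H\"older-in-time modulus for $x_0^{v_0}$ obtained from the SDE, the linear growth conditions, and H\"older's inequality applied to $\|v_0\|_{\mathcal{L}^2(t,s)}^2$, matched on $[-b,0]$ by Assumption~\ref{assumption:initial}-(ii). Your explicit treatment of the mixed range $t-\gamma<0<t-\delta$ and your closing remark on the role of the control-dependence of $\sigma_0$ are consistent with the paper's brief "possibly with a larger constant" remark and with Remark~\ref{remark:sigma_0}, respectively.
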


\begin{remark} \label{remark:sigma_0}
 For if $\sigma_0$ is independent of the control $v_0$, we  can impose a stronger regularity condition on $\xi_0$ such that $\tilde{q}= 1$ in \eqref{assum:xi_0_conti}.
    Then, by following the argument in the proof of Lemma \ref{prop:holdercontinuity}, we obtain the following better result than \eqref{C.2.} of H\"older continuity estimate:
\begin{align}
    &\sup_{s\in[0,T]}\left\|x_1^{i,\delta,v_0}(s)-x_1^{i,\gamma,v_0}(s)\right\|^2_2 \notag \\
    \le\ & C\left(L,T,\xi_0,\xi^i_1\right)\bigg(1+l_{u_1[v_0]}+\|v_0\|_{\mathcal{L}^2}^2+\sup_{\delta\in[a,b]}\left\|u_1^{i, \delta}[v_0]\right\|_{\mathcal{L}^2}^2\bigg)(\gamma-\delta),\quad \forall\,\delta,\gamma\in[a,b]. \label{better_estimate}
\end{align}
\end{remark}

\section{Convergence of an $N$-player system}\label{sec:convergence}
To give a precise description of $\epsilon_1(N)$ and $\epsilon_2(N)$ in Problem~\ref{eq:aim}, one can  establish the  precise convergence rates of state process of $N$-player system towards the mean-field system. In this section, we  assume that Player $0$ takes a fixed control $v_0(\cdot) \in \mathcal{L}_{\mathcal{F}^0}^q\left([0, T] ; \mathbb{R}^{p_0}\right)$. The objective of this section is to give the convergence rate of the distance between the state process for  Problem~\ref{problem:N} and that for Problem~\ref{problem:Limit}, with the same control $\mathbf{u}[v_0]$, that is, the norm $\left\|y_0^{\mathbf{u}[v_0]}-x_0^{v_0}\right\|_{\mathcal{S}^2}$ and $ \left\|y_1^{i,\delta_i,\mathbf{u}[v_0]}-x_1^{i,\delta_i,v_0}\right\|_{\mathcal{S}^2}$, which will be done in Subsection \ref{sec:converge:optimal}. Beside, we also give the convergence rate for the norm of  
$y_0^{{\mathbf{v}^i}[v_0]}-x_0^{v_0}$, $y_1^{i,\delta_i,{\mathbf{v}^i}[v_0]}-x^{i,\delta_i,v_0,v_1}_1$ and $y_1^{j,\delta_{j},{\mathbf{v}^i}[v_0]}-x^{j,\delta_{j},v_0}_1$ for $j\neq i$, which will be done in Subsection \ref{sec:converge:arbitrary}.

As a whole, in both cases, we shall give the
$\mathcal{O}\left((f(N))^{\frac{q-2}{3q-4}}\right)$-convergence rate, where $f$ was defined in \eqref{def:function:f}. As a comparison, we refer the readers to an interesting work \cite[Theorem 7.1]{Carmona-Zhu} for the study conditional propagation of chaos (corresponding to MFGs with major and minor players) with an $ \mathcal{O}(N^{-1/(n_1+4)})$ convergence rate, where the diffusion coefficients are independent of the controls. Here, we consider a Stackelberg game with a dominating player, which differs from the mean field game with major and minor players in that the dominating player moves first and the followers respond afterward, 
which makes it more useful in economics, finance and engineering; and we include the time delay together with the control-dependence  diffusion coefficient, which significantly complicates the analysis and reduces the convergence rate. Besides, we also refer to \cite{MR3857463,Nourian-Caines} for the  $\mathcal{O}(1/\sqrt{N})$ convergence rate for the case when the coefficients are of the form  \eqref{intr_1}; and also \cite{Bensoussan-Chau-Yam,huang2025nonlinear} for an $o(1)$ convergence  rate  with constant diffusion. In \cite{Bensoussan-Chau-Yam},  it is anticipated that  unlike the common considerations in the existing literature, this convergence should not be (and should be slower than) $\mathcal{O}(1/\sqrt{N})$. In this article, we provide a precise answer with a rigorous proof.

\subsection{Convergence for state process under optimal control}\label{sec:converge:optimal}
In this subsection, we  examine how the state process of the $N$-player game under $u^{i,\delta_i}_1[v_0],i=1,2\cdots,N$ deviates from that of the limiting counterpart. The following proposition gives the moment estimate for $y_0^{\mathbf{u}[v_0]}-x_0^{v_0}$ and $y_1^{j,\delta,\mathbf{u}[v_0]}-x_1^{j,\delta,v_0}$.

\begin{lemma}\label{prop:rate:optimal}
   Under Assumptions~\ref{assumption:initial}-(i) and \ref{assump:lip} and Hypothesis~\ref{assumption:integral}-(i), the state processes of the $N$-player game and their mean field limits corresponding to the control $\mathbf{u}[v_0]$ satisfy the following estimate:
    \begin{align}
	&\left\|y_0^{\mathbf{u}[v_0]}-x_0^{v_0}\right\|^2_{\mathcal{S}^2} + \sup_{\delta\in[a,b]} \left\|y_1^{i,\delta,\mathbf{u}[v_0]}-x_1^{i,\delta,v_0}\right\|^2_{\mathcal{S}^2} \nonumber\\
    \leq\ & C(T,L)\int_0^T \mE \bigg[W_2^2 \bigg(\text{\footnotesize$\frac {1}{N-1} {\sum_{j\neq i} \delta_{x_1^{j, \Delta_j,v_0}(s)}}$},\ z[v_0](s)\bigg)\bigg]\md s+\frac{1}{N}C(T,L)\sup_{\delta\in[a,b]} \left\|x_1^{i, \delta,v_0}\right\|^2_{\mathcal{S}^2}.\label{eq:approx:optimal}
\end{align}
\end{lemma}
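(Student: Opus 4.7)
\medskip

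\noindent\textbf{Proof proposal.} The plan is to subtract the SDEs defining the $N$-player processes under $\mathbf{u}[v_0]$ from those of the mean field processes, apply It\^o isometry together with the Burkholder--Davis--Gundy (BDG) inequality and the Lipschitz regularity in Assumption~\ref{assump:lip}-(i), and then control the resulting Wasserstein terms by splitting them into a \emph{state-difference} piece and an \emph{empirical-measure} piece via Lemma~\ref{lma:convex:wa}, closing the estimate with Gr\"onwall's lemma.

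First I would treat the leader. Subtracting \eqref{eq:y_0} and \eqref{eq:x_0}, applying BDG, and using the Lipschitz bound, one arrives at
\begin{align*}
\mE\Big[\sup_{s\le t}\big|y_0^{\mathbf{u}[v_0]}(s)-x_0^{v_0}(s)\big|^2\Big]
\le C(T,L)\int_0^t \mE\big|y_0^{\mathbf{u}[v_0]}(s)-x_0^{v_0}(s)\big|^2\md s
+C(T,L)\int_0^t \mE\Big[W_2^2\Big(\text{\footnotesize$\tfrac{1}{N}\sum_{j=1}^N\delta_{y_1^{j,\Delta_j,\mathbf{u}[v_0]}(s)}$},\,z[v_0](s)\Big)\Big]\md s.
\end{align*}
I then split the Wasserstein term by the triangle inequality through the intermediate measures $\tfrac{1}{N}\sum_{j=1}^N\delta_{x_1^{j,\Delta_j,v_0}(s)}$ and $\tfrac{1}{N-1}\sum_{j\neq i}\delta_{x_1^{j,\Delta_j,v_0}(s)}$. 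The pairing $j\leftrightarrow j$ bounds the first piece by $\tfrac1N\sum_{j=1}^N|y_1^{j,\Delta_j,\mathbf{u}[v_0]}(s)-x_1^{j,\Delta_j,v_0}(s)|^2$, which by exchangeability of the $N$ followers equals $\mE\big|y_1^{i,\Delta_i,\mathbf{u}[v_0]}(s)-x_1^{i,\Delta_i,v_0}(s)\big|^2$ in expectation, hence is dominated by $\sup_{\delta\in[a,b]}\|y_1^{i,\delta,\mathbf{u}[v_0]}-x_1^{i,\delta,v_0}\|_{\mathcal{S}^2}^2$. For the comparison between the $\tfrac1N$-empirical and the $\tfrac{1}{N-1}$-empirical, I use the identity $\tfrac1N\sum_{j=1}^N\delta_{X_j}=\tfrac{N-1}{N}\cdot\tfrac{1}{N-1}\sum_{j\neq i}\delta_{X_j}+\tfrac1N\delta_{X_i}$ and Lemma~\ref{lma:convex:wa}-\eqref{lma:convex:wa:3} to reduce this piece to $\tfrac1N W_2^2(\delta_{X_i},\tfrac{1}{N-1}\sum_{j\neq i}\delta_{X_j})\le \tfrac{C}{N}\sup_{\delta}\|x_1^{i,\delta,v_0}\|_{\mathcal{S}^2}^2$, producing exactly the $\tfrac1N$-correction appearing in \eqref{eq:approx:optimal}.

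Next I repeat the argument for the representative follower $i$ by subtracting \eqref{eq:y_1} from \eqref{eq:x_1^i}. The Lipschitz continuity in Assumption~\ref{assump:lip}-(i) produces three contributions: the pointwise state difference, the Wasserstein term $W_2\big(\tfrac{1}{N-1}\sum_{j\neq i}\delta_{y_1^{j,\Delta_j,\mathbf{u}[v_0]}(s)},z[v_0](s)\big)$, and the delayed leader difference $|y_0^{\mathbf{u}[v_0]}(s-\delta)-x_0^{v_0}(s-\delta)|$. Since both leader paths agree with $\xi_0$ on $[-b,0]$, the delayed difference vanishes for $s\le \delta$ and otherwise is bounded by $\sup_{r\le T}|y_0^{\mathbf{u}[v_0]}(r)-x_0^{v_0}(r)|$, which feeds directly into the leader estimate. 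The new Wasserstein term is split by the triangle inequality through $\tfrac{1}{N-1}\sum_{j\neq i}\delta_{x_1^{j,\Delta_j,v_0}(s)}$, again yielding a state-difference piece (bounded by $\tfrac{1}{N-1}\sum_{j\neq i}|y_1^{j,\Delta_j,\mathbf{u}[v_0]}-x_1^{j,\Delta_j,v_0}|^2$, hence by $\sup_{\delta}\|y_1^{i,\delta,\mathbf{u}[v_0]}-x_1^{i,\delta,v_0}\|_{\mathcal{S}^2}^2$ under expectation and by exchangeability of the $N-1$ followers other than $i$) and the empirical-measure piece appearing in \eqref{eq:approx:optimal}.

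Finally, taking $\sup_{\delta\in[a,b]}$ of the follower estimate and summing it with the leader estimate gives a Gr\"onwall-type inequality of the form $\phi(t)\le A(T,N)+C(T,L)\int_0^t \phi(s)\,\md s$, where $\phi(t)$ is the left-hand side of \eqref{eq:approx:optimal} evaluated up to time $t$ and $A(T,N)$ is the right-hand side. Gr\"onwall's lemma closes the estimate. The main obstacle is handling the asymmetry between $\tfrac1N\sum_{j=1}^N$ in the leader dynamics and $\tfrac{1}{N-1}\sum_{j\neq i}$ in the follower dynamics so that exactly one ``empirical-measure'' term of the specific form stated in \eqref{eq:approx:optimal} survives; Lemma~\ref{lma:convex:wa} is the decisive tool, as it absorbs this asymmetry into a clean $\tfrac1N$ correction rather than cross terms that would deteriorate the ultimate convergence rate.
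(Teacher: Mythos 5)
Your proposal is correct and follows essentially the same route as the paper's proof: a standard SDE/Gr\"onwall estimate, the coupling (pairing $j\leftrightarrow j$) bound combined with exchangeability to control $W_2$ between the $y$- and $x$-empirical measures, and the convex decomposition with Lemma~\ref{lma:convex:wa}-\eqref{lma:convex:wa:3} to absorb the $\tfrac1N$-versus-$\tfrac{1}{N-1}$ asymmetry into the $\tfrac{1}{N}\sup_{\delta}\|x_1^{i,\delta,v_0}\|^2_{\mathcal{S}^2}$ correction. No gaps to report.
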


\begin{proof}
    By standard estimate of SDE (see \cite{MR3793166,AB_book,MR3674558} for instance) , we have 
\begin{align}
    &\left\|y_0^{\mathbf{u}[v_0]}-x_0^{v_0}\right\|^2_{\mathcal{S}^2(0,t)} + \sup_{\delta\in[a,b]} \left\|y_1^{i,\delta,\mathbf{u}[v_0]}-x_1^{i,\delta,v_0}\right\|^2_{\mathcal{S}^2(0,t)} \notag \\
    \leq{}& C(T,L)\bigg\{ \int_0^t \bigg( \left\|y_0^{\mathbf{u}[v_0]}-x_0^{v_0}\right\|^2_{\mathcal{S}^2(0,s)} + \sup_{\delta\in[a,b]} \left\|y_1^{i,\delta,\mathbf{u}[v_0]}-x_1^{i,\delta,v_0}\right\|^2_{\mathcal{S}^2(0,s)} \bigg) \md s  \notag \\
    &  + \int_0^t \bigg( \mE\bigg[W_2^2 \bigg( \text{\footnotesize$\frac{1}{N} \sum_{j=1}^N \delta_{y_1^{j, \Delta_j,\mathbf{u}[v_0]}(s)}$},\ z[v_0](s) \bigg)\bigg] + \mE\bigg[W_2^2 \bigg( \text{\footnotesize$\frac{1}{N-1} \sum_{\substack{ j \neq i}} \delta_{y_1^{j, \Delta_j,\mathbf{u}[v_0]}(s)}$},\  z[v_0](s) \bigg)\bigg] \bigg) \md s \bigg\}. \label{add-6}
\end{align}
Note that via the use of the coupling technique (see \cite{bensoussan2016mean,AB_book,carmona2018probabilistic}), we have
\begin{align}
    &\mE\bigg[	W_2^2 \bigg(\text{\footnotesize$\frac 1N {\sum_{j=1}^N \delta_{y_1^{j, \Delta_j,\mathbf{u}[v_0]}(s)}}$},\ \text{\footnotesize$\frac 1N {\sum_{j=1}^N \delta_{x_1^{j, \Delta_j,v_0}(s)}}$}\bigg)\bigg]\nonumber\\
    \leq\ & \mE\bigg[\int_{\mR^{n_1}\times\mR^{n_1}}|y-x|^2\md\bigg(\text{\footnotesize$\frac1N\sum_{j=1}^N\delta_{\left(y_1^{j, \Delta_j,\mathbf{u}[v_0]}(s),x_1^{j, \Delta_j,v_0}(s)\right)}(y,x)$}\bigg) \bigg]\nonumber\\
    =\ & \mE\bigg[\text{\footnotesize$\frac1N \sum_{j=1}^N\left|y_1^{j, \Delta_j,\mathbf{u}[v_0]}(s)-x_1^{j, \Delta_j,v_0}(s)\right|^2$}\bigg]\nonumber\\
    =\ & \left\|y_1^{i, \Delta_i,\mathbf{u}[v_0]}(s)-x_1^{i, \Delta_i,v_0}(s)\right\|^2_2\nonumber\\
    \le\ & \sup_{\delta\in[a,b]} \left\|y_1^{i,\delta,\mathbf{u}[v_0]}-x_1^{i,\delta,v_0}\right\|^2_{\mathcal{S}^2(0,s)},\label{eq:w2:conv1}
\end{align}
where the second equality uses symmetry. Combined with the triangle inequality
\begin{align*}
 W_2 \bigg(\text{\footnotesize$\frac 1N {\sum_{j=1}^N \delta_{y_1^{j, \Delta_j,\mathbf{u}[v_0]}(s)}}$},\ z[v_0](s)\bigg)\leq \ & W_2 \bigg(\text{\footnotesize$\frac 1N {\sum_{j=1}^N \delta_{y_1^{j, \Delta_j,\mathbf{u}[v_0]}(s)}},\ \frac 1N {\sum_{j=1}^N \delta_{x_1^{j, \Delta_j,v_0}(s)}}$}\bigg)\\ &+W_2 \bigg(\text{\footnotesize$\frac 1N {\sum_{j=1}^N \delta_{x_1^{j, \Delta_j,v_0}(s)}}$},\ z[v_0](s)\bigg),
\end{align*}
we obtain
\begin{align*}
	&\left\|y_0^{\mathbf{u}[v_0]}-x_0^{v_0}\right\|^2_{\mathcal{S}^2(0,t)} + \sup_{\delta\in[a,b]} \left\|y_1^{i,\delta,\mathbf{u}[v_0]}-x_1^{i,\delta,v_0}\right\|^2_{\mathcal{S}^2(0,t)}\\\leq\ & C(T,L)\int_0^t \bigg\{ \mE\bigg[W_2^2 \bigg( \text{\footnotesize$\frac{1}{N} \sum_{j=1}^N \delta_{x_1^{j, \Delta_j,v_0}(s)}$}, z[v_0](s) \bigg)\bigg] + \mE\bigg[W_2^2 \bigg( \text{\footnotesize$\frac{1}{N-1} \sum_{\substack{ j \neq i}} \delta_{x_1^{j, \Delta_j,v_0}(s)}$},\  z[v_0](s) \bigg)\bigg] \bigg\} \md s.
\end{align*}
Moreover, note that
\begin{align}
	&\mE\bigg[W^2_2 \bigg(\text{\footnotesize$\frac 1N {\sum_{j=1}^N \delta_{x_1^{j, \Delta_j,v_0}(s)}},\ \frac {1}{N-1} {\sum_{j\neq i} \delta_{x_1^{j, \Delta_j,v_0}(s)}}\bigg)$}\bigg]\nonumber\\
    =\ &\mE\bigg[W^2_2 \bigg(\text{\footnotesize$ \frac {N-1}{N}\frac{1}{N-1} {\sum_{j\neq i} \delta_{x_1^{j, \Delta_j,v_0}(s)}}+\frac1N\delta_{x_1^{i, \Delta_j,v_0}(s)},\ \frac {1}{N-1} {\sum_{j\neq i} \delta_{x_1^{j, \Delta_j,v_0}(s)}}\bigg)$}\bigg]\nonumber\\ 
    \leq\ & \frac 1N\mE\bigg[W^2_2 \bigg(\delta_{x_1^{i, \Delta_i,v_0}(s)},\ \text{\footnotesize$\frac {1}{N-1} {\sum_{j\neq i} \delta_{x_1^{j, \Delta_j,v_0}(s)}}\bigg)$}\bigg]\nonumber\\=\ &\frac{1}{N}\frac{1}{N-1}\text{\footnotesize$\sum_{j\neq i}\left\|x_1^{j, \Delta_j,v_0}(s)-{x}_1^{i, \Delta_i,v_0}(s)\right\|_2^2$}\label{eq:w2:con2}\\
    \leq \ &\frac{4}{N}\sup_{\delta\in[a,b]} \left\|x_1^{i, \delta,v_0}\right\|^2_{\mathcal{S}^2}.\nonumber
\end{align}
where the first inequality we have used \eqref{lma:convex:wa:3} in Lemma \ref{lma:convex:wa}.
Combined with another triangle inequality
\begin{align*}
	W_2 \bigg(\text{\footnotesize$\frac 1N {\sum_{j=1}^N \delta_{x_1^{j, \Delta_j,v_0}(s)}}$},\ z[v_0](s)\bigg)\le\ & W_2 \bigg(\text{\footnotesize$\frac {1}{N-1} {\sum_{j\neq i} \delta_{x_1^{j, \Delta_j,v_0}(s)}}$},\ z[v_0](s)\bigg)\\
    &+W_2 \bigg(\text{\footnotesize$\frac 1N {\sum_{j=1}^N \delta_{x_1^{j, \Delta_j,v_0}(s)}},\ \frac {1}{N-1} {\sum_{j\neq i}\delta_{x_1^{j, \Delta_j,v_0}(s)}}$}\bigg),
\end{align*}
we get \eqref{eq:approx:optimal}.
\end{proof}

In view of Lemma \ref{prop:rate:optimal}, to get the  convergence rate for $\left\|y_0^{\mathbf{u}[v_0]}-x_0^{v_0}\right\|^2_{\mathcal{S}^2} + \sup\limits_{\delta\in[a,b]} \left\|y_1^{i,\delta,\mathbf{u}[v_0]}-x_1^{i,\delta,v_0}\right\|^2_{\mathcal{S}^2}$ as $N\to \infty$,  we only need to compute the convergence rate for
\begin{align}\label{eq:conver:w_2}
	\mE \bigg[\int_0^T W_2^2 \bigg(\text{\footnotesize$\frac {1}{N-1} {\sum_{j\neq i}\delta_{x_1^{j, \Delta_j,v_0}(s)}}$},\ z[v_0](s)\bigg)\md s\bigg].
\end{align}
Next, we shall examine the convergence rate for \eqref{eq:conver:w_2} in three cases in order: 
\begin{enumerate}[(1)]
    \item $\Delta$ follows a degenerate (single-point) distribution;  
    \item $\Delta$ has a discrete distribution;
    \item $\Delta$ follows a general distribution.
\end{enumerate}
	
\subsubsection{$\Delta\equiv a\equiv b$}
Under Assumptions~\ref{assumption:initial}-(i)  and \ref{assump:lip} and Hypothesis~\ref{assumption:integral}-(i),  from \eqref{def:z'}, we know that in this case, $z[v_0]$ writes
\begin{align*}
    z[v_0](s)(\omega)={\mathbb{P}^{\text{\tiny $\mathcal{F}^0_{s-a}$}}_{\text{\tiny$x_1^{j, a,v_0}(s)$}}}(\omega,\cdot), \qquad \omega\in \Omega.
\end{align*} 
Given the filtration $\mathcal{F}_{s-a}^0$, note the fact that $\left\{x_1^{j, \Delta_j,v_0}(s)=x_1^{j, a,v_0}(s),1\leq j\leq N,j\neq i\right\}$ are iid with the common regular conditional distribution $\mathbb{P}^{\text{\tiny$\mathcal{F}^0_{s-a}$}}_{\text{\tiny $x_1^{j, a,v_0}(s)$}}$, then, by Lemma \ref{lemma:wasserstien} we have
\begin{align*}
	\mE\left[W_2^2 \bigg(\text{\footnotesize$\frac {1}{N-1} {\sum_{j\neq i} \delta_{x_1^{j, \Delta_j,v_0}(s)}}$},\ z[v_0](s)\bigg)\bigg|\mathcal{F}_{s-a}^0\right]\leq C(n_1,q)\left(\mE\left[\left|x_1^{j, a,v_0}(s)\right|^q\Big|\mathcal{F}_{s-a}^0\right]\right)^{\frac2q}f(N-1),
\end{align*}
and therefore,
\begin{align}		
	\mE\bigg[ W_2^2 \bigg(\text{\footnotesize$\frac {1}{N-1} {\sum_{j\neq i} \delta_{x_1^{j, a,v_0}(s)}}$},\ z[v_0](s)\bigg)\bigg]&\leq C(n_1,q)\mE\left[\left(\mE\left[\left|x_1^{j, a,v_0}(s)\right|^q\Big|\mathcal{F}_{s-a}^0\right]\right)^{\frac2q}\right]f(N-1)\nonumber\\
	&\leq C(n_1,q)\left\|x_1^{j, a,v_0}(s)\right\|_q^2f(N-1),\label{eq:convergence:single}
\end{align}
where the second inequality is a consequence of H\"older inequality.

\subsubsection{Discrete  random variable $\Delta$}

We now consider the case when the random variable $\Delta$ is discrete. Suppose that 
\begin{equation}\label{setting:Delta:discrete}
\begin{aligned}
    &\Delta\in\{a_k,k=0,1,\cdots,n\},\qquad a=a_0<a_1<\cdots<a_n=b,\\
    &\text{where\quad}  \mP(a_k)=p_k,\ 0\le k\le n,\quad \text{and\quad } \sum_{k=0}^np_k=1.
\end{aligned}
\end{equation}
For this case, from \eqref{def:z'}, we know that $z[v_0]$ writes
\begin{align*}
	z[v_0](s)=\sum_{k=0}^np_k \; {\mathbb{P}^{\text{\tiny$\mathcal{F}^0_{s-a_k}\vee \F^z_s$}}_{\text{\tiny $x_1^{1, a_k,v_0}(s)$}}}.
\end{align*} 
We give the estimate on \eqref{eq:conver:w_2} corresponding to this case.  

\begin{lemma}\label{prop:convergence:discrete}
 Under Assumptions~\ref{assumption:initial}-(i) and  \ref{assump:lip} and Hypothesis~\ref{assumption:integral}-(i), we also suppose that $\de$ satisfies \eqref{setting:Delta:discrete}. Then, we have the following  convergence rate for \eqref{eq:conver:w_2}:
    \begin{align}
	&\mE\bigg[W_2^2 \bigg(\text{\footnotesize$\frac {1}{N-1} {\sum_{j\neq i} \delta_{x_1^{j, \Delta_j,v_0}(s)}}$},\ z[v_0](s)\bigg)\bigg] \notag \\
    \leq\ & C(n_1,q,L,T,\xi_0,\xi_1)\left(1+\|v_0\|_{\mathcal{L}^q}^2+\sup_{\delta\in[a,b]}\|u_1^{i,\delta}[v_0]\|_{\mathcal{L}^q}^2\right)f(N-1)\sum_{k=0}^{n}\sqrt{p_k}. \label{eq:rate:discrete}
\end{align}
\end{lemma}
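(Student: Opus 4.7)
The plan is to decompose both the empirical measure and its limit $z[v_0](s)$ along the $n+1$ delay classes, and then control the two resulting mismatches with the mixture-Wasserstein inequalities established in Lemmas~\ref{lma:convex:wa} and \ref{lma:pq:mu}. Let $I_k := \{j\neq i:\Delta_j=a_k\}$, $N_k := |I_k|$, and set
\[
\mu^N_k(s) := \frac{1}{N_k}\sum_{j\in I_k}\delta_{x_1^{j,a_k,v_0}(s)}, \qquad \mu^\infty_k(s) := \mP^{\F^0_{s-a_k}\vee \F^z_s}_{x_1^{1,a_k,v_0}(s)},
\]
so that $\tfrac{1}{N-1}\sum_{j\neq i}\delta_{x_1^{j,\Delta_j,v_0}(s)} = \sum_{k=0}^n \tfrac{N_k}{N-1}\mu^N_k(s)$ and $z[v_0](s) = \sum_{k=0}^n p_k\mu^\infty_k(s)$. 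Interpolating by $\sum_k \tfrac{N_k}{N-1}\mu^\infty_k(s)$, the triangle inequality reduces the estimate to controlling
\[
\mathrm{(a)}\ W_2^2\Big(\sum_k\tfrac{N_k}{N-1}\mu^N_k,\ \sum_k\tfrac{N_k}{N-1}\mu^\infty_k\Big) \quad\text{and}\quad \mathrm{(b)}\ W_2^2\Big(\sum_k\tfrac{N_k}{N-1}\mu^\infty_k,\ \sum_k p_k\mu^\infty_k\Big).
\]
Lemma~\ref{lma:convex:wa} bounds (a) by $\sum_k \tfrac{N_k}{N-1}W_2^2(\mu^N_k,\mu^\infty_k)$, and Lemma~\ref{lma:pq:mu} together with \eqref{eq:p_ii} bounds (b) by a weighted sum of $W_2^2(\mu^\infty_h,\mu^\infty_l)\leq 2(\mathcal{M}_2^2(\mu^\infty_h)+\mathcal{M}_2^2(\mu^\infty_l))$ whose total off-diagonal weight equals $\tfrac{1}{2}\sum_k|N_k/(N-1)-p_k|$.

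For (a), I condition on $\mathscr{C}_2 := \F^0_T\vee \F^z_T\vee\sigma(\Delta_l:l\neq i)$. Given $\mathscr{C}_2$, the index set $I_k$ is deterministic, and the samples $\{x_1^{j,a_k,v_0}(s):j\in I_k\}$ are conditionally iid because the follower inputs $\{(\xi^j_1,W^j_1):j\in I_k\}$ are mutually independent and each is independent of $\mathscr{C}_2$. To identify the common conditional law with $\mu^\infty_k(s)$, I invoke Lemma~\ref{lma:property:probability} with $\mathscr{C}_1 := \F^0_{s-a_k}\vee\F^z_s$, $\mathscr{C}_2$ as above, and $\mathscr{G} := \F^{1,j}_s$: indeed $x_1^{j,a_k,v_0}(s)$ is $\mathscr{C}_1\vee\mathscr{G}$-measurable while $\mathscr{G}$ is independent of $\mathscr{C}_2$ by the construction of the filtrations in Subsection~\ref{subsec:Limit}, so $\mP^{\mathscr{C}_2}_{x_1^{j,a_k,v_0}(s)}=\mP^{\mathscr{C}_1}_{x_1^{j,a_k,v_0}(s)}=\mu^\infty_k(s)$. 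Lemma~\ref{lemma:wasserstien} then gives, on $\{N_k\geq 2\}$,
\[
\mE\!\left[W_2^2(\mu^N_k,\mu^\infty_k)\,\big|\,\mathscr{C}_2\right]\leq C(n_1,q)\Big(\mE\big[|x_1^{1,a_k,v_0}(s)|^q\,\big|\,\mathscr{C}_2\big]\Big)^{2/q}f(N_k),
\]
while on $\{N_k\leq 1\}$ the crude bound $W_2^2(\mu^N_k,\mu^\infty_k)\leq 2\mathcal{M}_2^2(\mu^N_k)+2\mathcal{M}_2^2(\mu^\infty_k)$ suffices.

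Taking expectations and summing over $k$ reduces the problem to two binomial moment bounds for $N_k\sim\text{Binomial}(N-1,p_k)$: first $\mE[|N_k/(N-1)-p_k|]\leq\sqrt{p_k(1-p_k)/(N-1)}$, which handles (b); and second, $\mE\big[\tfrac{N_k}{N-1}f(N_k)\ind_{\{N_k\geq 2\}}\big]\leq C\sqrt{p_k}\,f(N-1)$ uniformly in the three regimes of $n_1$, which follows from $\mE[N_k^{-\alpha}\ind_{\{N_k\geq 1\}}]\leq C_\alpha(Np_k)^{-\alpha}$ for $\alpha\in\{1/2,2/n_1\}$ (note $1-2/n_1\geq 1/2$ when $n_1>4$); the residual contribution from $\{N_k\leq 1\}$ decays exponentially in $N$ and is absorbed. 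Finally, H\"older's inequality and the $\mathcal{L}^q$-estimate \eqref{estimate:SDE:Lq-1} of Lemma~\ref{prop:moment}(ii) control $\mE\big[(\mE[|x_1^{1,a_k,v_0}(s)|^q\mid\mathscr{C}_2])^{2/q}\big]$ by $C(1+\|v_0\|^2_{\mathcal{L}^q}+\sup_\delta\|u_1^{i,\delta}[v_0]\|^2_{\mathcal{L}^q})$. Summing over $k=0,\ldots,n$ yields the announced bound $Cf(N-1)\sum_{k=0}^n\sqrt{p_k}$.

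The main obstacle I foresee is the careful invocation of Lemma~\ref{lma:property:probability}: enlarging the conditioning $\sigma$-algebra from $\F^0_{s-a_k}\vee\F^z_s$ to the coarser $\mathscr{C}_2$ is indispensable for simultaneously decoupling the random delay assignments (which determine $I_k$ and $N_k$) and the follower-specific inputs, yet the regular conditional law of each sample must be preserved; without this identification $\mu^N_k$ cannot be paired with $\mu^\infty_k$ in Lemma~\ref{lemma:wasserstien}. A secondary technical point is the uniform treatment of $\mE[N_kf(N_k)/(N-1)]$ across the three regimes of $n_1$, so that a single $\sqrt{p_k}\,f(N-1)$ prefactor emerges; the borderline case $n_1=4$ with its logarithmic factor follows the same pattern by a direct computation.
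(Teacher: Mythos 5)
Your proposal is correct and follows essentially the same route as the paper's proof: the same decomposition of the empirical measure by delay class, the same interpolating mixture, Lemma~\ref{lma:convex:wa} for the first mismatch, Lemma~\ref{lma:pq:mu} with \eqref{eq:p_ii} for the weight mismatch, Lemma~\ref{lemma:wasserstien} within each class, and the same binomial/Jensen moment bounds yielding the $\sqrt{p_k}\,f(N-1)$ prefactor. Your explicit invocation of Lemma~\ref{lma:property:probability} with the enlarged conditioning $\sigma$-algebra, and your separate treatment of the event $\{N_k\le 1\}$, make rigorous two points the paper handles only implicitly (via permutation symmetry after conditioning on the multinomial vector $M$), but they do not constitute a different argument.
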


\begin{proof}
We denote $M := (M_0, M_1, \cdots, M_n)$ to be the multinomial random variable so that $M_k$ counts the number of players in the $k$-th hysteresis group. Given $M$, by permutation symmetry, we can re-index the players without altering the conditional expectation. Hence, without loss of generality, we can assume the ﬁrst $M_0$ players have $\Delta=a_0$ . Then, the next $M_1$ players have $\Delta=a_1$, and so on. Thus,
\begin{align}
	&\mE\bigg[W_2^2 \bigg(\text{\footnotesize$\frac {1}{N-1} {\sum_{j\neq i} \delta_{x_1^{j, \Delta_j,v_0}(s)}}$},\ z[v_0](s)\bigg)\bigg]=	\mE\bigg[\mE\bigg[W_2^2 \bigg(\text{\footnotesize$\frac {1}{N-1} {\sum_{j\neq i} \delta_{x_1^{j, \Delta_j,v_0}(s)}}$},\ z[v_0](s)\bigg)\bigg|M\bigg]\bigg] \notag \\
	=\ &\mE\bigg[\mE\bigg[W_2^2\text{\footnotesize$ \bigg({\sum_{k=0}^n\frac {M_k}{N-1} \frac{1}{M_k} \sum_{ M_k}\delta_{x_1^{j, a_k,v_0}(s)}},\ \sum_{k=0}^n \text{ \normalsize$p_k\,\mathbb{P}$}^{\mathcal{F}^0_{s-a_k}\vee F^z_s}_{x_1^{1, a_k,v_0}(s)}$}\bigg)\bigg|M\bigg]\bigg] \notag \\
    \leq \ &2\mE\bigg[\mE\bigg[W_2^2 \bigg(\text{\footnotesize${\sum_{k=0}^n\frac {M_k}{N-1} \frac{1}{M_k} \sum_{ M_k}\delta_{x_1^{j, a_k,v_0}(s)}},\ \sum_{k=0}^n\frac {M_k}{N-1} \text{\normalsize$\mathbb{P}$}^{\mathcal{F}^0_{s-a_k}\vee F^z_s}_{x_1^{1, a_k,v_0}(s)}$}\bigg)\bigg|M\bigg]\bigg] \notag \\
    &\qquad+2\mE\bigg[\mE\bigg[W_2^2 \bigg(\text{\footnotesize$\sum_{k=0}^n\frac {M_k}{N-1} \text{\normalsize$\mathbb{P}$}^{\mathcal{F}^0_{s-a_k}\vee F^z_s}_{x_1^{1, a_k,v_0}(s)},\ \sum_{k=0}^n \text{\normalsize$p_k\;\mathbb{P}$}^{{\mathcal{F}^0_{s-a_k}\vee F^z_s}}_{x_1^{1, a_k,v_0}(s)}$}\bigg)\bigg|M\bigg]\bigg]. \label{add-1}
\end{align}
For the first term of the right hand side of \eqref{add-1}, we have the following estimate
\begin{align}
	&\mE\bigg[\mE\bigg[W_2^2 \bigg(\text{\footnotesize${\sum_{k=0}^n\frac {M_k}{N-1} \frac{1}{M_k} \sum_{ M_k}\delta_{x_1^{j, a_k,v_0}(s)}},\ \sum_{k=0}^n\frac {M_k}{N-1} \text{\normalsize$\mathbb{P}$}^{\mathcal{F}^0_{s-a_k}\vee F^z_s}_{x_1^{1, a_k,v_0}(s)}$}\bigg)\bigg|M\bigg]\bigg] \notag \\
	\leq\ & 	\mE\bigg[\text{\footnotesize$\sum_{k=0}^{n}\frac {M_k}{N-1}\mE\bigg[W_2^2 \bigg( \frac{1}{M_k} \sum_{ M_k}\delta_{x_1^{j, a_k,v_0}(s)},\ \text{\normalsize$\mathbb{P}$}^{\mathcal{F}^0_{s-a_k}\vee F^z_s}_{x_1^{1, a_k,v_0}(s)}\bigg)$}\bigg|M\bigg]\bigg] \notag \\
	\leq\ & C(n_1,q)\sup_{\delta\in[a,b]}\left\|x_1^{j, \delta,v_0}(s)\right\|_q^2\mE\bigg[\text{\footnotesize$\sum_{k=0}^{n}\frac {M_k}{N-1}f(M_k)$}\bigg], \label{add-3}
\end{align}
 where the first inequality is a direct consequence of  \eqref{lma:convex:wa:2} in Lemma \ref{lma:convex:wa},  and the second inequality follows from a similar approach as \eqref{eq:convergence:single}. 
Note that $\mE [M_k]=(N-1)p_k$, by using Jensen's inequality  and the definition of the function $f$ in \eqref{def:function:f}, we have
\begin{align}
    \mE\bigg[\sum_{k=0}^{n} M_k f(M_k)\bigg]\le\ & 
    \left\{ 
    \begin{aligned}   \sum_{k=0}^{n}\sqrt{\mE[M_k]}=\ & \sqrt{N-1}\left(\sum_{k=0}^{n}\sqrt{p_k}\right), \qquad\qquad\qquad n_1<4;\\
    \log (N-1) \mE\left[\sum_{k=0}^{n}\sqrt{M_k}\right]\leq\ &  \left(\sum_{k=0}^{n}\sqrt{p_k}\right) \sqrt{N-1} \log (N-1),\quad n_1=4;\\
    \sum_{k=0}^{n}(\mE[M_k])^{1-\frac{2}{n_1}}=\ & (N-1)^{1-\frac{2}{n_1}}\bigg(\sum_{k=0}^{n}p_k^{\text{\footnotesize$1-\frac{2}{n_1}$}}\bigg), \ \ \ \qquad n_1>4,
    \end{aligned}
    \right. \notag \\
    =\ &  (N-1)f(N-1) \cdot \begin{cases}
	\sum_{k=0}^{n}\sqrt{p_k},\quad &n_1\leq 4;\\
	\sum_{k=0}^{n}p_k^{1-\frac{2}{n_1}},\quad &n_1>4.
    \end{cases} \label{add-4}
\end{align}
For the second term of the right hand side of \eqref{add-1}, by applying Lemma~\ref{lma:pq:mu} with $q_k:=\frac{M_k}{N-1}$ and $\mu_k:=\text{\normalsize$\mathbb{P}$}^{\mathcal{F}^0_{s-a_k}\vee F^z_s}_{x_1^{1, a_k,v_0}(s)}$, we have
\begin{align}
	&\mE\bigg[\mE\bigg[W_2^2 \bigg(\text{\footnotesize$\sum_{k=0}^n\frac {M_k}{N-1} \text{\normalsize$\mathbb{P}$}^{\mathcal{F}^0_{s-a_k}\vee F^z_s}_{x_1^{1, a_k,v_0}(s)},\ \sum_{k=0}^n \text{\normalsize$p_k\; \mathbb{P}$}^{\mathcal{F}^0_{s-a_k}\vee F^z_s}_{x_1^{1, a_k,v_0}(s)}$}\bigg)\bigg|M\bigg]\bigg] \notag \\
    =\ & \mE\bigg[\mE\bigg[W_2^2 \bigg(\sum_{k=0}^nq_k\mu_k,\ \sum_{k=0}^np_k\mu_k\bigg)\bigg|M\bigg]\bigg] \notag \\
    \leq\ & \mE\bigg[\sum_{h\neq l}\hat{\pi}_{hl}\mE\bigg[W_2^2 (\mu_{h},\mu_{l})\bigg|M\bigg]\bigg] \notag \\
	=\ & \mE\bigg[\sum_{h\neq l}\hat{\pi}_{hl}\mE\left[W_2^2 \left( \mu_h,\mu_l\right))\right]\bigg], \label{add-2}
\end{align}
 where $\hat{\pi}_{hl}$ is defined in \eqref{def:pi_ij}.  Note that for $h\neq l$, we have
	\begin{align*}
	\mE\left[W_2^2 \left( \mu_h,\mu_l\right))\right]=	&\mE\bigg[W_2^2 \bigg(\text{\footnotesize$ \text{\normalsize$\mathbb{P}$}^{\mathcal{F}^0_{s-a_h}\vee F^z_s}_{x_1^{1, a_h,v_0}(s)},\  \text{\normalsize$\mathbb{P}$}^{\mathcal{F}^0_{s-a_l}\vee F^z_s}_{x_1^{1, a_l,v_0}(s)}$}\bigg)\bigg]\\
    \le\ &  2\mE\left[W_2^2 \left(\text{\footnotesize$ \text{\normalsize$\mathbb{P}$}^{\mathcal{F}^0_{s-a_h}\vee F^z_s}_{x_1^{1, a_h,v_0}(s)}$},\  \delta_0\right)\right]+2\mE\left[W_2^2 \left( \delta_0,\ \text{\footnotesize$ \text{\normalsize$\mathbb{P}$}^{\mathcal{F}^0_{s-a_l}\vee F^z_s}_{x_1^{1, a_l,v_0}(s)}$}\right)\right]\\
	=\ &2\mE\left[\left|x_1^{1, a_h,v_0}(s)\right|^2+\left|x_1^{1, a_l,v_0}(s)\right|^2\right]
		\leq 4 \sup_{\delta\in[a,b]}\left\|x_1^{i, \delta,v_0}(s)\right\|_2^2.
	\end{align*}
 Substituting the last inequality and \eqref{eq:p_ii} back into \eqref{add-2}, we get
\begin{align*}
	&\mE\bigg[\mE\bigg(W_2^2 \bigg(\text{\footnotesize$\sum_{k=0}^n\frac {M_k}{N-1} \text{\normalsize$\mathbb{P}$}^{\mathcal{F}^0_{s-a_k}\vee F^z_s}_{x_1^{1, a_k,v_0}(s)},\ \sum_{k=0}^n\text{\normalsize$p_k\;\mathbb{P}$}^{\mathcal{F}^0_{s-a_k}\vee F^z_s}_{x_1^{1, a_k,v_0}(s)}$}\bigg)\bigg|M\bigg)\bigg]\\	
    \leq \ &2\sup_{\delta\in[a,b]}\left\|x_1^{i, \delta,v_0}(s)\right\|_2^2\mE\bigg[\text{\footnotesize$\sum_{k=0}^n\bigg|\frac{M_k}{N-1}-p_k\bigg|$}\bigg]
	\\
    \leq \ &2\sup_{\delta\in[a,b]}\left\|x_1^{i, \delta,v_0}(s)\right\|_2^2 \frac{1}{\sqrt{N-1}}\sum_{k=0}^{n}\sqrt{p_k(1-p_k)}.
\end{align*}
 Combining the last estimate with \eqref{add-1}, \eqref{add-3} and \eqref{add-4}, we have
\begin{align*}
	&\mE\bigg[W_2^2 \bigg(\text{\footnotesize$\frac {1}{N-1} {\sum_{j\neq i}^N \delta_{x_1^{j, \Delta_j,v_0}(s)}},\ z[v_0](s)$}\bigg)\bigg] \\\leq \ &2\sup_{\delta\in[a,b]}\left\|x_1^{i, \delta,v_0}(s)\right\|_2^2 \frac{1}{\sqrt{N-1}}\sum_{k=0}^{n}\sqrt{p_k(1-p_k)}\\
    &+ C(n_1,q)\sup_{\delta\in[a,b]}\left\|x_1^{i, \delta,v_0}(s)\right\|_q^2f(N-1)\cdot \begin{cases}
	\sum_{k=0}^{n}\sqrt{p_k},\quad &n_1\leq 4;\\
	\sum_{k=0}^{n}p_k^{1-\frac{2}{n_1}},\quad &n_1>4.
    \end{cases}\\
    \leq \ &C(n_1,q)\sup_{\delta\in[a,b]}\left\|x_1^{i, \delta,v_0}(s)\right\|_q^2f(N-1)\sum_{k=0}^{n}\sqrt{p_k}\\
    \leq \ &C(n_1,q,L,T,\xi_0,\xi_1)\left(1+\|v_0\|_{\mathcal{L}^q}^2+\sup_{\delta\in[a,b]}\left\|u_1^{i,\delta}[v_0]\right\|_{\mathcal{L}^q}^2\right)f(N-1)\sum_{k=0}^{n}\sqrt{p_k},
\end{align*}
where the last inequality uses \eqref{estimate:SDE:Lq-1}. Thus, the claim follows.
\end{proof}

\begin{remark}
    The proof of Lemma~\ref{prop:convergence:discrete} makes use of Lemma \ref{lemma:wasserstien}, therefore, the convergence rate $f(N-1)$ with respect to $N$ in \eqref{eq:rate:discrete} is optimal; while the  constant ( $\sum_{k=0}^n\sqrt{p_k}$) related to $n$ and $\{p_k,0\le k\leq n\}$ may not be optimal. 
\end{remark}

\subsubsection{General $\Delta$}\label{subsubsec:General_Delta}
In this subsubsection, we consider $\Delta$ being arbitrarily distributed on $[a,b]$ with a measure $\pi_{\Delta}$. 

\begin{lemma}\label{prop:general}
	Under Assumptions \ref{assumption:initial}-\ref{assump:lip} and Hypothesis \ref{assumption:integral}, for any $s\in[0,T]$, we have
\begin{align}
                &\mE\bigg[ W_2^2 \bigg(\text{\footnotesize$\frac {1}{N-1} {\sum_{j\neq i}\delta_{x_1^{j, \Delta_j,v_0}(s)}}$},\ z[v_0](s)\bigg)\bigg]\nonumber\\
                \leq\ & C(n_1,q,L,T,\xi_0,\xi_1)\bigg(1+l_{u_1[v_0]}+\|v_0\|_{\mathcal{L}^q}^2+\sup_{\delta\in[a,b]}\left\|u_1^{i,\delta}[v_0]\right\|_{\mathcal{L}^q}^2\bigg)(f(N-1))^{\frac{2q-4}{3q-4}}.\label{eq:conver:optimal} 
     \end{align}
\end{lemma}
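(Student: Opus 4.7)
The plan is to reduce the general case to the discrete case covered by Lemma~\ref{prop:convergence:discrete} by discretizing the delay distribution $\pi_\Delta$, and then to balance the discretization error against the discrete convergence rate. I partition $[a,b]$ into $n$ equal sub-intervals with endpoints $a_k:=a+k(b-a)/n$, $k=0,\dots,n$, and common width $h:=(b-a)/n$; I set $\tilde\Delta_j:=a_{k-1}$ on $\{\Delta_j\in[a_{k-1},a_k)\}$ and $p_k:=\pi_\Delta([a_{k-1},a_k))$, and I introduce the auxiliary ``discretized target''
\[
\tilde z(s) \ :=\ \sum_{k=1}^n p_k\, \mathbb{P}^{\mathcal{F}^0_{s-a_{k-1}}\vee\mathcal{F}_s^z}_{x_1^{1,a_{k-1},v_0}(s)}.
\]
By the triangle inequality, the quantity appearing in \eqref{eq:conver:optimal} is bounded by the sum of (I)~$W_2\bigl(\tfrac{1}{N-1}\sum_{j\neq i}\delta_{x_1^{j,\Delta_j,v_0}(s)},\,\tfrac{1}{N-1}\sum_{j\neq i}\delta_{x_1^{j,\tilde\Delta_j,v_0}(s)}\bigr)$, (II)~$W_2\bigl(\tfrac{1}{N-1}\sum_{j\neq i}\delta_{x_1^{j,\tilde\Delta_j,v_0}(s)},\,\tilde z(s)\bigr)$, and (III)~$W_2\bigl(\tilde z(s),z[v_0](s)\bigr)$, whose squared expectations I would control in turn.

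For term (I), the same-index coupling together with the symmetry of the followers gives $\mE[(\mathrm{I})^2]\le\mE[|x_1^{1,\Delta_1,v_0}(s)-x_1^{1,\tilde\Delta_1,v_0}(s)|^2]$; conditioning on $\Delta_1$ (independent of the remaining randomness) and applying Lemma~\ref{prop:holdercontinuity} with $|\Delta_1-\tilde\Delta_1|\le h$ yields $\mE[(\mathrm{I})^2]\le C h^{(q-2)/q}$. For term (II), I observe that the proof of Lemma~\ref{prop:convergence:discrete} never invokes the fixed-point property: it uses only the structural identity $\tilde z(s)=\sum_k p_k\mathbb{P}^{\cdots}_{x_1^{1,a_{k-1},v_0}(s)}$ and the conditional iid of $\{x_1^{j,a_{k-1},v_0}(s)\}_j$ across followers at each atom, both of which remain true in the present construction; verbatim repetition of that argument therefore gives $\mE[(\mathrm{II})^2]\le C f(N-1)\sum_{k=1}^n\sqrt{p_k}\le C f(N-1)\sqrt n$ by Cauchy--Schwarz.

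Term (III) is the main obstacle, because $\mu_\delta:=\mathbb{P}^{\mathcal{F}^0_{s-\delta}\vee\mathcal{F}_s^z}_{x_1^{1,\delta,v_0}(s)}$ is conditioned on a $\sigma$-algebra that varies with $\delta$, so $(x_1^{1,a_{k-1},v_0}(s),x_1^{1,\delta,v_0}(s))$ is not a priori an admissible coupling of $(\mu_{a_{k-1}},\mu_\delta)$. I first apply the mixture-convexity inequalities~\eqref{lma:convex:wa:1}--\eqref{lma:convex:wa:2} (writing $z[v_0](s)=\sum_k p_k\cdot p_k^{-1}\!\int_{[a_{k-1},a_k)}\mu_\delta\,\md\pi_\Delta(\delta)$) to obtain
\[
\mE\bigl[W_2^2(\tilde z(s), z[v_0](s))\bigr]\ \le\ \sum_{k=1}^n\int_{[a_{k-1},a_k)}\mE\bigl[W_2^2(\mu_{a_{k-1}},\mu_\delta)\bigr]\,\md\pi_\Delta(\delta).
\]
Since $\mathcal{F}_s^z\subset\mathcal{F}^0_{s-a}$ and $\mathcal{F}_s^{1,1}$ is independent of $\mathcal{F}^0_{s-a}$, Lemma~\ref{lma:property:probability} applied with $\mathcal{C}_1=\mathcal{F}^0_{s-\delta}\vee\mathcal{F}_s^z$, $\mathcal{C}_2=\mathcal{F}^0_{s-a}$, and $\mathcal{G}=\mathcal{F}_s^{1,1}$ lets me rewrite $\mu_\delta=\mathbb{P}^{\mathcal{F}^0_{s-a}}_{x_1^{1,\delta,v_0}(s)}$, and similarly $\mu_{a_{k-1}}=\mathbb{P}^{\mathcal{F}^0_{s-a}}_{x_1^{1,a_{k-1},v_0}(s)}$. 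Both are now regular conditional laws on the \emph{common} $\sigma$-algebra $\mathcal{F}^0_{s-a}$, so the joint law of $(x_1^{1,a_{k-1},v_0}(s),x_1^{1,\delta,v_0}(s))$ furnishes a legitimate coupling; this together with Lemma~\ref{prop:holdercontinuity} yields $\mE[W_2^2(\mu_{a_{k-1}},\mu_\delta)]\le\|x_1^{1,a_{k-1},v_0}(s)-x_1^{1,\delta,v_0}(s)\|_2^2\le C h^{(q-2)/q}$, hence $\mE[(\mathrm{III})^2]\le C h^{(q-2)/q}$.

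Collecting the three bounds produces $\mE[W_2^2(\cdots)]\le C\bigl(h^{(q-2)/q}+f(N-1)\sqrt n\bigr)$, with a constant absorbing the $1+l_{u_1[v_0]}+\|v_0\|_{\mathcal{L}^q}^2+\sup_\delta\|u_1^{i,\delta}[v_0]\|_{\mathcal{L}^q}^2$ factors coming from Lemmas~\ref{prop:convergence:discrete} and~\ref{prop:holdercontinuity}. Optimizing over $n\in\mathbb{N}$ by balancing the two terms through $n^{-(q-2)/q}\asymp f(N-1)\, n^{1/2}$ gives $n\asymp f(N-1)^{-2q/(3q-4)}$, and substituting back yields the announced rate $f(N-1)^{(2q-4)/(3q-4)}$.
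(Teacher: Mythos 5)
Your proposal is correct and follows essentially the same route as the paper's proof: the same uniform discretization of $\pi_\Delta$, the same three-term triangle-inequality decomposition, Lemma~\ref{prop:convergence:discrete} applied to the discretized target (which, as you note, does not require the fixed-point property), Lemma~\ref{lma:property:probability} combined with the H\"older estimate \eqref{C.2.} for the remaining terms, and the same optimization $n\asymp f(N-1)^{-2q/(3q-4)}$. The only cosmetic difference is in term (III), where you re-condition both laws onto the common $\sigma$-algebra $\F^0_{s-a}$ in one step, whereas the paper re-conditions only $\mu_\delta$ onto $\F^0_{s-a^{(n)}_{k-1}}\vee\F^z_s$ (so that one sub-term vanishes identically) and then couples under that $\sigma$-algebra; both are legitimate applications of the same lemma and yield the same bound.
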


\begin{proof}
	Let $\left\{a_k^{(n)},\ k=0,\dots,n\right\}$ be the level $n$ uniform partition on $[a,b]$, i.e., $a_k^{(n)}:= a + \frac{k}{n}(b-a)$; here, the parameter $n$ will be chosen as a function of $N$ later to optimize the convergence rate as $N\to\infty$. Let $M^{(n)} = \left(M_1^{(n)}, M_2^{(n)}, \ldots, M_n^{(n)}\right)$ be the multinomial random variable on $\left\{a_0^{(n)}, a_1^{(n)}, \ldots, a_{n-1}^{(n)}\right\}$ with event probabilities $p_k^{(n)} := \pi_{\Delta}\left(\left[a_{k-1}^{(n)}, a_k^{(n)}\right)\right)$ (hence $\sum_{k=1}^n p_k^{(n)} = 1$)\footnote{Here we abuse the notations, $[a_{n-1}^{(n)}, a_n^{(n)}):=[a_{n-1}^{(n)}, a_n^{(n)}]$ for the very last subinterval.}. 
	We  can write
    \begin{align*}
        \frac {1}{N-1} {\sum_{j\neq i} \delta_{x_1^{j, \Delta_j,v_0}(s)}}=\ & \frac {1}{N-1}  \sum_{j\neq i}\sum_{k=1}^n\ind_{\Delta_j \in \left[a_{k-1}^{(n)}, a_k^{(n)}\right)}{ \delta_{x_1^{j, \Delta_j,v_0}(s)}}\\
        =\ & \frac {1}{N-1} \sum_{k=1}^n \sum_{j\neq i}\ind_{\Delta_j \in \left[a_{k-1}^{(n)}, a_k^{(n)}\right)}{ \delta_{x_1^{j, \Delta_j,v_0}(s)}}.
    \end{align*}
    Therefore, we have
    \begin{align}
		&\mE\bigg[ W_2^2 \bigg(\text{\footnotesize$\frac {1}{N-1} {\sum_{j\neq i} \delta_{x_1^{j, \Delta_j,v_0}(s)}}$},z[v_0](s)\bigg)\bigg] \notag \\
		=\ &  \mathbb{E} \bigg[ \mathbb{E} \bigg[W_2^2 \bigg(\text{\footnotesize$\frac {1}{N-1} \sum_{k=1}^n \sum_{j\neq i}\ind_{\Delta_j \in \left[a_{k-1}^{(n)}, a_k^{(n)}\right)}{ \delta_{x_1^{j, \Delta_j,v_0}(s)}}$},\ z[v_0](s) \bigg) \bigg| M^{(n)} \bigg] \bigg] \notag \\
		\leq\ &3  \mathbb{E} \Bigg[\mathbb{E} \Bigg[W_2^2 \Bigg(\text{\footnotesize$ \frac {1}{N-1} \sum_{k=1}^n \sum_{j\neq i}\ind_{\Delta_j \in \left[a_{k-1}^{(n)}, a_k^{(n)}\right)}{ \delta_{x_1^{j, \Delta_j,v_0}(s)}}, \ \frac {1}{N-1} \sum_{k=1}^n \sum_{j\neq i}\ind_{\Delta_j \in \left[a_{k-1}^{(n)}, a_k^{(n)}\right)}{ \delta_{x_1^{j, a_{k-1}^{(n)},v_0}(s)}}$}\Bigg) \Bigg| M^{(n)} \Bigg] \Bigg] \notag \\
		&+3 \mathbb{E} \Bigg[ \mathbb{E} \Bigg[W_2^2 \Bigg(\text{\footnotesize$ \frac {1}{N-1} \sum_{k=1}^n \sum_{j\neq i}\ind_{\Delta_j \in \left[a_{k-1}^{(n)}, a_k^{(n)}\right)}{ \delta_{x_1^{j, a_{k-1}^{(n)},v_0}(s)}}$},\ z^{(n)}(s)\Bigg) \Bigg| M^{(n)} \Bigg] \Bigg] \notag \\
		&+3  \mathbb{E} \left[\mathbb{E} \left[W_2^2 \left( z^{(n)}[v_0](s),z[v_0](s)\right) \middle| M^{(n)} \right] \right], \label{add-5}
	\end{align}
where 
\begin{align*}
    z^{(n)}[v_0](s)  := \sum_{k=1}^n p_k^{(n)}\; \mathbb{P}^{\text{\tiny $\mathcal{F}_{s-a_{k-1}^{(n)}}^0 \vee \mathcal{F}_s^z$}}_{\text{\tiny $x_1^{1,a_{k-1}^{(n)},v_0} (s)$}}.
\end{align*}
For the first term on the right hand side of \eqref{add-5}, we note that 
\begin{align*}
	&\e\Bigg[\mathbb{E} \Bigg[W_2^2 \Bigg( \text{\footnotesize$\frac {1}{N-1} \sum_{k=1}^n \sum_{j\neq i}\ind_{\Delta_j \in \left[a_{k-1}^{(n)}, a_k^{(n)}\right)}{ \delta_{x_1^{j, \Delta_j,v_0}(s)}}, \ \frac {1}{N-1} \sum_{k=1}^n \sum_{j\neq i}\ind_{\Delta_j \in \left[a_{k-1}^{(n)}, a_k^{(n)}\right)}{ \delta_{x_1^{j, a_{k-1}^{(n)},v_0}(s)}}$}\Bigg) \Bigg| M^{(n)} \Bigg] \Bigg]\\
    \leq\ &\e\Bigg[\sum_{k=1}^n\frac{M^{(n)}_k}{N-1}\mathbb{E} \Bigg[W_2^2 \Bigg(\text{\footnotesize$ \frac{1}{M^{(n)}_k}\sum\limits_{j\neq i} \ind_{\Delta_j \in \left[a_{k-1}^{(n)}, a_k^{(n)}\right)}{ \delta_{x_1^{j, \Delta_j,v_0}(s)}} ,\ \frac{1}{M^{(n)}_k} \sum\limits_{j\neq i} \ind_{\Delta_j \in \left[a_{k-1}^{(n)}, a_k^{(n)}\right)}{ \delta_{x_1^{j, a_{k-1}^{(n)},v_0}(s)}} $} \Bigg) \Bigg| M^{(n)} \Bigg]\Bigg]\\
	\leq\  &\sum_{k=1}^n \frac{1}{N-1}\e\bigg[\text{\footnotesize$\sum_{j\neq i}$}\ind_{\Delta_j \in \left[a_{k-1}^{(n)}, a_k^{(n)}\right)} \mathbb{E} \bigg[W_2^2 \bigg(  \delta_{ \text{\tiny $x_1^{j,\Delta_j,v_0}(s)$}} , \  \delta_{\text{\tiny$x_1^{j,a_{k-1}^{(n)},v_0}(s)$}}  \bigg) \bigg| M^{(n)} \bigg] \bigg]\\
    =\  &\frac{1}{N-1}\sum_{j\neq i} \e\bigg[\mathbb{E} \bigg[\text{\footnotesize$\sum_{k=1}^n$}\ind_{\Delta_j \in \left[a_{k-1}^{(n)}, a_k^{(n)}\right)}W_2^2 \bigg(  \delta_{ \text{\tiny $x_1^{j,\Delta_j,v_0}(s)$}} , \  \delta_{\text{\tiny$x_1^{j,a_{k-1}^{(n)},v_0}(s)$}}  \bigg) \bigg| M^{(n)} \bigg]\bigg]\\
	\le\  & \frac{1}{N-1} \mE\Bigg[\sum_{j\neq i}\mE\bigg[\text{\footnotesize$\sum_{k=1}^n$}\ind_{\Delta_j \in \left[a_{k-1}^{(n)}, a_k^{(n)}\right)}\left|x_1^{j,\Delta_j,v_0}(s)-x_1^{j,a_{k-1}^{(n)},v_0}(s)\right|^2\bigg| M^{(n)} \bigg]\Bigg]\\
    \leq\  &\frac{1}{N-1} \sum_{j\neq i}  C(L,T,\xi_0,\xi^i_1,q)\bigg(1+l_{u_1[v_0]}+\|v_0\|_{\mathcal{L}^q}^2+\sup_{\delta\in[a,b]}\|u_1^{i, \delta}[v_0\|_{\mathcal{L}^2}^2\bigg){n^{-\frac{q-2}{q}}}\\
    =\ & C(L,T,\xi_0,\xi^i_1,q)\bigg(1+l_{u_1[v_0]}+\|v_0\|_{\mathcal{L}^q}^2+\sup_{\delta\in[a,b]}\|u_1^{i, \delta}[v_0\|_{\mathcal{L}^2}^2\bigg) n^{-\frac{q-2}{q}},
\end{align*}
where the first and second inequalities are the direct consequences of \eqref{lma:convex:wa:2} in Lemma \ref{lma:convex:wa},  the third inequality is due to the minimal nature of the definition of Wasserstein metric, and the last inequality is a consequence of \eqref{C.2.}.
For the second term on the right hand side of \eqref{add-5}, by using Lemma \ref{prop:convergence:discrete} and applying the inequality $\sum_{k=1}^n \sqrt{\text{\footnotesize $p^{(n)}_k$}}\leq \sqrt{n}$, we obtain
\begin{align*}
&\mathbb{E} \bigg[ \mathbb{E} \bigg[W_2^2 \bigg( \text{\footnotesize $\frac {1}{N-1} \sum_{j\neq i} \sum_{k=1}^n \ind_{\Delta_j \in \left[a_{k-1}^{(n)}, a_k^{(n)}\right)}{ \delta_{x_1^{j, a_{k-1}^{(n)},v_0}(s)}},\ z^{(n)}(s)$}\bigg) \bigg| M^{(n)} \bigg] \bigg]\\
	 \leq \ & C(n_1,q,L,T,\xi_0,\xi_1)\bigg(1+\|v_0\|_{\mathcal{L}^q}^2+\sup_{\delta\in[a,b]}\|u_1^{i,\delta}[v_0]\|_{\mathcal{L}^q}^2\bigg)f(N-1)\sqrt{n}.
\end{align*}
For the third term on the right hand side of \eqref{add-5}, we know from the definition of $z[v_0]$ and $z^{(n)}[v_0]$ that
\begin{align*}
    &\mathbb{E} \left[W_2^2 \left( z^{(n)}[v_0](s),z[v_0](s)\right)  \right]\\
    =\ & \mE \Bigg[W_2^2\Bigg(\int_{[a,b]}{\mathbb{P}^{\text{\tiny$\F_{s-\delta}^0 \vee \F_{s}^z$}}_{\text{\tiny$x_1^{1,\delta,v_0}(s)$}}  } \md\pi_{\Delta}(\delta),\ \sum_{k=1}^n p_k^{(n)}\; \mathbb{P}_{\text{\tiny$x_1^{1,a_{k-1}^{(n)},v_0} (s)$}}^{\text{\tiny$\mathcal{F}_{s-a_{k-1}^{(n)}}^0 \vee \mathcal{F}_s^z$}}\Bigg)\Bigg]\\
    =\ &\mE \Bigg[W_2^2\Bigg( \sum_{k=1}^n\int_{[a^{(n)}_{k-1},a^{(n)}_{k})} \text{\footnotesize$\text{\normalsize$\mathbb{P}$}^{\F_{s-\delta}^0 \vee \F_{s}^z}_{x_1^{1,\delta,v_0}(s)} $}\md\pi_{\Delta}(\delta),\ \sum_{k=1}^n p_k^{(n)}\; \text{\footnotesize$\text{\normalsize$\mathbb{P}$}_{x_1^{1,a_{k-1}^{(n)},v_0} (s)}^{\mathcal{F}_{s-a_{k-1}^{(n)}}^0 \vee \mathcal{F}_s^z}$} \Bigg)\Bigg]\\
    \leq \ &2\mE \Bigg[W_2^2\Bigg( \sum_{k=1}^n\int_{[a^{(n)}_{k-1},a^{(n)}_{k})} \text{\footnotesize$\text{\normalsize$\mathbb{P}$}^{\F_{s-\delta}^0 \vee \F_{s}^z}_{x_1^{1,\delta,v_0}(s)} $}\md\pi_{\Delta}(\delta),\ \sum_{k=1}^n\int_{[a^{(n)}_{k-1},a^{(n)}_{k})}\text{\footnotesize$ \text{\normalsize$\mathbb{P}$}^{\F_{s-a^{(n)}_{k-1}}^0 \vee \F_{s}^z}_{x_1^{1,\delta,v_0}(s)}$}  \md \pi_{\Delta}(\delta)\Bigg)\Bigg]\\
    &\qquad+2\mE \Bigg[W_2^2\Bigg( \sum_{k=1}^n\int_{[a^{(n)}_{k-1},a^{(n)}_{k})} \text{\footnotesize$\text{\normalsize$\mathbb{P}$}^{\F_{s-a^{(n)}_{k-1}}^0 \vee \F_{s}^z}_{x_1^{1,\delta,v_0}(s)}$} \md\pi_{\Delta}(\delta),\ \sum_{k=1}^n p_k^{(n)}\; \text{\footnotesize$\text{\normalsize$\mathbb{P}$}_{x_1^{1,a_{k-1}^{(n)},v_0} (s)}^{\mathcal{F}_{s-a_{k-1}^{(n)}}^0 \vee \mathcal{F}_s^z}$}\Bigg)\Bigg].
\end{align*}
For $\delta \in \left[a^{(n)}_{k-1},a^{(n)}_{k} \right)$, since $x_1^{1,\delta,v_0}(s)$ is $\mathcal{F}_s^{1,i} \vee \mathcal{F}_{s-\delta}^0 \vee \mathcal{F}_s^z$ and thus $\mathcal{F}_s^{1,i} \vee \mathcal{F}_{s-a^{(n)}_{k-1}}^0 \vee \mathcal{F}_s^z$ adapted, and $\mathcal{F}^{1,i}$ and $\mathcal{F}^0\vee \mathcal{F}^z$ are independent of each other,  by the usage of \eqref{eq:property:regular}, we have
\begin{align}
 \mathbb{P}^{\text{\tiny$\F_{s-\delta}^0 \vee \F_{s}^z$}}_{\text{\tiny$x_1^{1,\delta,v_0}(s)$}} =\mathbb{P}^{\text{\tiny$\F_{s-a^{(n)}_{k-1}}^0 \vee \F_{s}^z$}}_{\text{\tiny$x_1^{1,\delta,v_0}(s)$}},\quad \forall \delta \in \left[a^{(n)}_{k-1},a^{(n)}_{k}\right),\label{eq:pro:conditional}
\end{align}
Thus, by using \eqref{lma:convex:wa:2} and \eqref{lma:convex:wa:1} in Lemma \ref{lma:convex:wa} respectively, we have
\begin{align*}
    &\mE \left[W_2^2\left( \sum_{k=1}^n\int_{[a^{(n)}_{k-1},a^{(n)}_{k})} \mathbb{P}^{\text{\tiny$\F_{s-\delta}^0 \vee \F_{s}^z$}}_{\text{\tiny$x_1^{1,\delta,v_0}(s)$}} \md\pi_{\Delta}(\delta),\ \sum_{k=1}^n\int_{[a^{(n)}_{k-1},a^{(n)}_{k})} \mathbb{P}^{\text{\tiny$\F_{s-a^{(n)}_{k-1}}^0 \vee \F_{s}^z$}}_{\text{\tiny$x_1^{1,\delta,v_0}(s)$}}  \md \pi_{\Delta}(\delta)\right)\right]\\
    \leq \ & 
    \mE \left[\sum_{k=1}^np^{(n)}_kW_2^2\left( \frac{1}{p^{(n)}_k}\int_{[a^{(n)}_{k-1},a^{(n)}_{k})} \mathbb{P}^{\text{\tiny$\F_{s-\delta}^0 \vee \F_{s}^z$}}_{\text{\tiny$x_1^{1,\delta,v_0}(s)$}} \md\pi_{\Delta}(\delta),\ \frac{1}{p^{(n)}_k}\int_{[a^{(n)}_{k-1},a^{(n)}_{k})} \mathbb{P}^{\text{\tiny$\F_{s-a^{(n)}_{k-1}}^0 \vee \F_{s}^z$}}_{\text{\tiny$x_1^{1,\delta,v_0}(s)$}}  \md \pi_{\Delta}(\delta)\right)\right]\\
    \leq \ & \mE \left[\sum_{k=1}^n\int_{[a^{(n)}_{k-1},a^{(n)}_{k})} W^2_2\left(\mathbb{P}^{\text{\tiny$\F_{s-\delta}^0 \vee \F_{s}^z$}}_{\text{\tiny$x_1^{1,\delta,v_0}(s)$}},\ \mathbb{P}_{\text{\tiny$x_1^{1,\delta,v_0} (s)$}}^{\text{\tiny$\mathcal{F}_{s-a_{k-1}^{(n)}}^0 \vee \mathcal{F}_s^z$}}\right) \md\pi_{\Delta}(\delta)\right]=0.
\end{align*}
For the another term
 \begin{align*}
 &\mE \left[W_2^2\left( \sum_{k=1}^n\int_{[a^{(n)}_{k-1},a^{(n)}_{k})}\mathbb{P}^{\text{\tiny$\F_{s-a^{(n)}_{k-1}}^0 \vee \F_{s}^z$}}_{\text{\tiny$x_1^{1,\delta,v_0}(s)$}}
 \md\pi_{\Delta}(\delta),\ \sum_{k=1}^n p_k^{(n)} \mathbb{P}_{\text{\tiny$x_1^{1,a_{k-1}^{(n)},v_0} (s)$}}^{\text{\tiny$\mathcal{F}_{s-a_{k-1}^{(n)}}^0 \vee \mathcal{F}_s^z$}}\right)\right]\\
 	\leq\ &\mE \left[\sum_{k=1}^np^{(n)}_kW_2^2\left( \frac{1}{p^{(n)}_k}\int_{[a^{(n)}_{k-1},a^{(n)}_{k})} \mathbb{P}^{\text{\tiny$\F_{s-a^{(n)}_{k-1}}^0 \vee \F_{s}^z$}}_{\text{\tiny$x_1^{1,\delta,v_0}(s)$}}
 \md\pi_{\Delta}(\delta),\  \mathbb{P}_{\text{\tiny$x_1^{1,a_{k-1}^{(n)},v_0} (s)$}}^{\text{\tiny$\mathcal{F}_{s-a_{k-1}^{(n)}}^0 \vee \mathcal{F}_s^z$}}\right)\right]\\
 	\leq\ &\mE \left[\sum_{k=1}^n\int_{[a^{(n)}_{k-1},a^{(n)}_{k})} W^2_2\left(\mathbb{P}^{\text{\tiny$\F_{s-a^{(n)}_{k-1}}^0 \vee \F_{s}^z$}}_{\text{\tiny$x_1^{1,\delta,v_0}(s)$}},\ \mathbb{P}_{\text{\tiny$x_1^{1,a_{k-1}^{(n)},v_0} (s)$}}^{\text{\tiny$\mathcal{F}_{s-a_{k-1}^{(n)}}^0 \vee \mathcal{F}_s^z$}}\right) \md\pi_{\Delta}(\delta)\right]\\
    \leq \ &\mE\left[\sum_{k=1}^n\int_{[a^{(n)}_{k-1},a^{(n)}_{k})} 
    \mE\left[\left(x_1^{1,\delta,v_0}(s)-x_1^{1,a_{k-1}^{(n)},v_0} (s)\right)^2\Bigg|\mathcal{F}_{s-a_{k-1}^{(n)}}^0 \vee \mathcal{F}_s^z\right] \md\pi_{\Delta}(\delta)\right]
    \\
 	=\ & \sum_{k=1}^n
 	 \int_{[a^{(n)}_{k-1},a^{(n)}_{k})} \mE\left[\left|x_1^{1,\delta,v_0}(s)-x_1^{1,a_{k-1}^{(n)},v_0} (s)\right|^2\right] \md\pi_{\Delta}(\delta)\\
     \leq\ & C(L,T,\xi_0,\xi^i_1,q)\bigg(1+l_{u_1[v_0]}+\|v_0\|_{\mathcal{L}^q}^2+\sup_{\delta\in[a,b]}\|u_1^{i, \delta}[v_0\|_{\mathcal{L}^2}^2\bigg)n^{-\frac{q-2}{q}},
 	 \end{align*}
     where the first  and second inequality use \eqref{lma:convex:wa:2} and \eqref{lma:convex:wa:1} in Lemma \ref{lma:convex:wa} respectively, the third inequality uses the minimal nature for Wasserstein metric and the last inequality uses \eqref{C.2.}.
     Therefore, we obtain
     \begin{align*}
         &\mE\Bigg[ W_2^2 \Bigg(\text{\footnotesize$\frac {1}{N-1} {\sum_{j\neq i}\delta_{x_1^{j, \Delta_j,v_0}(s)}}$},z[v_0](s)\Bigg)\Bigg]\\ \leq \ &2C(L,T,\xi_0,\xi^i_1,q)\bigg(1+l_{u_1[v_0]}+\|v_0\|_{\mathcal{L}^q}^2+\sup_{\delta\in[a,b]}\|u_1^{i, \delta}[v_0\|_{\mathcal{L}^2}^2\bigg)
n^{-\frac{q-2}{q}}\\\ &+C(n_1,q,L,T,\xi_0,\xi_1)\left(1+\|v_0\|_{\mathcal{L}^q}^2+\sup_{\delta\in[a,b]}\|u_1^{i,\delta}[v_0]\|_{\mathcal{L}^q}^2\right)f(N-1)\sqrt{n}.
     \end{align*}
     Taking  $n\sim (f(N-1))^{-\frac{2q}{3q-4}}$, we get \eqref{eq:conver:optimal}.
\end{proof}
Combining  Lemma \ref{prop:rate:optimal} and Lemma \ref{prop:general}, we have the following theorem:

\begin{theorem}\label{thm:converge:Ntolimit}
Under  Assumptions \ref{assumption:initial}-\ref{assump:lip} and Hypothesis \ref{assumption:integral}.
 The state processes of the $N$-player game and their mean field limits corresponding to the optimal control $\mathbf{u}[v_0]$ satisfy the following estimate:
    \begin{align*}
	&\left\|y_0^{\mathbf{u}[v_0]}-x_0^{v_0}\right\|^2_{\mathcal{S}^2} + \sup_{\delta\in[a,b]} \left\|y_1^{i,\delta,\mathbf{u}[v_0]}-x_1^{i,\delta,v_0}\right\|^2_{\mathcal{S}^2}\\ \leq \ &C(n_1,q,L,T,\xi_0,\xi_1)\bigg(1+l_{u_1[v_0]}+\|v_0\|_{\mathcal{L}^q}^2+\sup_{\delta\in[a,b]}\left\|u_1^{i,\delta}[v_0]\right\|_{\mathcal{L}^q}^2\bigg)(f(N-1))^{\frac{2q-4}{3q-4}}.
\end{align*}
\end{theorem}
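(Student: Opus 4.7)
The plan is to prove Theorem \ref{thm:converge:Ntolimit} by directly combining Lemma \ref{prop:rate:optimal} with Lemma \ref{prop:general}, using Lemma \ref{prop:moment} to absorb auxiliary moment terms. The argument is essentially a chaining of the intermediate estimates that have been set up in Subsection \ref{sec:converge:optimal}, so I do not expect serious difficulty; the only subtlety is keeping track of how fast each term decays in $N$.

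First, I would recall the bound from Lemma \ref{prop:rate:optimal}, which reduces the control of
\[
\bigl\|y_0^{\mathbf{u}[v_0]}-x_0^{v_0}\bigr\|^2_{\mathcal{S}^2} + \sup_{\delta\in[a,b]} \bigl\|y_1^{i,\delta,\mathbf{u}[v_0]}-x_1^{i,\delta,v_0}\bigr\|^2_{\mathcal{S}^2}
\]
to two pieces: (i) the time integral of $\mE[W_2^2(\tfrac{1}{N-1}\sum_{j\neq i}\delta_{x_1^{j,\Delta_j,v_0}(s)},\, z[v_0](s))]$; and (ii) the residual term $\tfrac{1}{N}\sup_{\delta\in[a,b]}\|x_1^{i,\delta,v_0}\|^2_{\mathcal{S}^2}$.

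Next, for piece (i), I would insert the pointwise-in-$s$ estimate from Lemma \ref{prop:general}, which is uniform in $s\in[0,T]$ with a constant depending only on $(n_1,q,L,T,\xi_0,\xi_1)$ and on the quantity $1+l_{u_1[v_0]}+\|v_0\|^2_{\mathcal{L}^q}+\sup_{\delta}\|u_1^{i,\delta}[v_0]\|^2_{\mathcal{L}^q}$. Integrating over $[0,T]$ simply multiplies by $T$, which is absorbed into the constant, and the rate $(f(N-1))^{\frac{2q-4}{3q-4}}$ is preserved.

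For piece (ii), I would apply the moment estimate \eqref{C.1.hat.x_1} from Lemma \ref{prop:moment}, which gives
\[
\sup_{\delta\in[a,b]}\bigl\|x_1^{i,\delta,v_0}\bigr\|^2_{\mathcal{S}^2} \le C(L,T)\Bigl[1+\|\xi_0\|^2_{\mathcal{S}^2(-b,0)}+\|\xi_1^i\|_2^2+\|v_0\|^2_{\mathcal{L}^2}+\sup_{\delta\in[a,b]}\|u_1^{i,\delta}[v_0]\|^2_{\mathcal{L}^2}\Bigr],
\]
so this residual is of order $\mathcal{O}(N^{-1})$ with a constant of the same structure as the one in Lemma \ref{prop:general}. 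Since $f(N-1)\ge N^{-2/n_1}\ge N^{-1}$ for $n_1\ge 2$ (and $N^{-1/2}$ for $n_1<4$, $N^{-1/2}\log N$ for $n_1=4$), we have $N^{-1}\le (f(N-1))^{\frac{2q-4}{3q-4}}$ for $N$ large, so this term is dominated by the rate coming from piece (i) up to a multiplicative constant.

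The only mildly delicate point, which I would verify explicitly, is that $N^{-1}\lesssim (f(N-1))^{\frac{2q-4}{3q-4}}$ holds across all three regimes of $f$ in \eqref{def:function:f}; since $\tfrac{2q-4}{3q-4}\in(0,\tfrac{2}{3})$ for $q>4$ and $f(N-1)$ decays no faster than $N^{-2/n_1}$, this is straightforward. Combining the two estimates yields the claimed bound with the combined constant $C(n_1,q,L,T,\xi_0,\xi_1)$ multiplied by $1+l_{u_1[v_0]}+\|v_0\|^2_{\mathcal{L}^q}+\sup_{\delta}\|u_1^{i,\delta}[v_0]\|^2_{\mathcal{L}^q}$, and the rate $(f(N-1))^{\frac{2q-4}{3q-4}}$.
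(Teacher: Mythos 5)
Your proposal is correct and follows exactly the route the paper intends: the paper states Theorem \ref{thm:converge:Ntolimit} as an immediate consequence of "combining Lemma \ref{prop:rate:optimal} and Lemma \ref{prop:general}", and your chaining of those two estimates, together with absorbing the residual $\mathcal{O}(N^{-1})\sup_{\delta}\|x_1^{i,\delta,v_0}\|^2_{\mathcal{S}^2}$ term via \eqref{C.1.hat.x_1} and the elementary comparison $N^{-1}\lesssim (f(N-1))^{\frac{2q-4}{3q-4}}$, is precisely the omitted bookkeeping. No gaps.
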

\begin{remark}\label{remark:delta}
  Suppose that $\Delta\in\{a_k,k=0,1,\cdots,n\}$ with $a=a_0<a_1<\cdots<a_n=b$,  $\mP(a_k)=p_k$ and $\sum_{k=0}^np_k=1$. Under the optimal control $\mathbf{u}[v_0]$, the state processes of the $N$-player game and their corresponding mean field limits satisfy the following estimate:
    \begin{align*}
        &\left\|y_0^{\mathbf{u}[v_0]}-x_0^{v_0}\right\|^2_{\mathcal{S}^2} + \sup_{\delta\in[a,b]} \left\|y_1^{i,\delta,\mathbf{u}[v_0]}-x_1^{i,\delta,v_0}\right\|^2_{\mathcal{S}^2}\\
        \leq \ & C(n_1,q,L,T,\xi_0,\xi_1)\left(1+\|v_0\|_{\mathcal{L}^q}^2+\sup_{\delta\in[a,b]}\|u_1^{i,\delta}[v_0]\|_{\mathcal{L}^q}^2\right)f(N-1)\sum_{k=0}^{n}\sqrt{p_k}.
    \end{align*}
\end{remark}

\subsection{Convergence for state process under arbitrary control}\label{sec:converge:arbitrary}
This section is devoted to analyzing the convergence rate of the state process of the $N$-player game to that of the mean field game under the control ${\mathbf{v}^i}[v_0]$. 
The next proposition provides moment estimates for the differences
$y_0^{{\mathbf{v}^i}[v_0]}-x_0^{v_0}$, $y_1^{i,\delta_i,{\mathbf{v}^i}[v_0]}-x^{i,\delta_i,v_0,v_1}_1$ and $y_1^{j,\delta_{j},{\mathbf{v}^i}[v_0]}-x^{j,\delta_{j},v_0}_1$ for $j\neq i$.
\begin{proposition}\label{prop:estimate:non-optimal}
Suppose that  Assumptions \ref{assumption:initial}-\ref{assump:lip} and Hypothesis \ref{assumption:integral} hold. Moreover,
suppose that the $j$-th player $(j\neq i)$ adopts the optimal controls $u^{j,\delta_j}_1[v_0]$ while the $i$-th player adopts an arbitrary control $v^{i,\delta_i}_1\in \mathcal{L}_{\mathcal{G}^{i,\delta_i}}^2\left([0, T] ; \mathbb{R}^{p_1}\right)$. Then we have the following estimate:
    \begin{align*}
        &\left\|y_0^{{\mathbf{v}^i}[v_0]}-x_0^{v_0}\right\|^2_{\mathcal{S}^2}+\sup_{\delta_j\in[a,b]}\left\|y_1^{j,\delta_j,{\mathbf{v}^i}[v_0]}-x^{j,\delta_j,v_0}_1\right\|_{\mathcal{S}^2}^2 +\sup_{\delta_i\in[a,b]}\left\|y_1^{i,\delta_i,{\mathbf{v}^i}[v_0]}-x^{i,\delta_i,v_0,v_1}_1\right\|_{\mathcal{S}^2}^2\\
   \leq\ & C(n_1,q,L,T,\xi_0,\xi_1)\bigg(1+l_{u_1[v_0]}+\|v_0\|_{\mathcal{L}^q}^2+\sup_{\delta\in[a,b]}\|u_1^{i,\delta}[v_0]\|_{\mathcal{L}^q}^2\bigg)(f(N-1))^{\frac{2q-4}{3q-4}}\\
   &+\frac1N C(L,T)\sup_{\delta\in[a,b]}\left\|v_1^{i,\delta}\right\|_{\mathcal{L}^2}.
    \end{align*}
\end{proposition}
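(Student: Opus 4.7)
The plan is to mirror the two-step strategy of Lemma~\ref{prop:rate:optimal}: first apply a standard stability/Gronwall estimate for SDEs to the coupled system for the three differences
\begin{align*}
D_0(t) &\defeq \left\|y_0^{{\mathbf{v}^i}[v_0]}-x_0^{v_0}\right\|^2_{\mathcal{S}^2(0,t)}, \quad D_j(t) \defeq \sup_{\delta_j\in[a,b]}\left\|y_1^{j,\delta_j,{\mathbf{v}^i}[v_0]}-x_1^{j,\delta_j,v_0}\right\|^2_{\mathcal{S}^2(0,t)}, \\
D_i(t) &\defeq \sup_{\delta_i\in[a,b]}\left\|y_1^{i,\delta_i,{\mathbf{v}^i}[v_0]}-x_1^{i,\delta_i,v_0,v_1}\right\|^2_{\mathcal{S}^2(0,t)}
\end{align*}
(with $j\neq i$ representing a generic non-deviating follower by exchangeability), and then reduce the driving Wasserstein terms to quantities already controlled by Theorem~\ref{thm:converge:Ntolimit}. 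By Assumption~\ref{assump:lip}(i) applied to \eqref{eq:y_0}--\eqref{eq:y_1} and \eqref{eq:x_0}--\eqref{eq:x_1^i}, the sum $D_0(t)+D_j(t)+D_i(t)$ is dominated by $C\int_0^t(D_0+D_j+D_i)(s)\md s$ plus integrals of expected squared Wasserstein distances between the empirical measures driving the $y$-processes and the mean field measure $z[v_0]$.

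Next I would split the two empirical measures via the triangle inequality. For the follower-level measure $\frac{1}{N-1}\sum_{k\neq j}\delta_{y_1^{k,\Delta_k,{\mathbf{v}^i}[v_0]}(s)}$ (and similarly the one with sum over $k\neq i$), I would couple it with $\frac{1}{N-1}\sum_{k\neq j}\delta_{x_1^{k,\Delta_k,v_0}(s)}$ using the diagonal transport plan, exactly as in \eqref{eq:w2:conv1}. Because the control used by each $k\neq i$ in ${\mathbf{v}^i}[v_0]$ is precisely $u_1^{k,\delta_k}[v_0]$, the coupling bound produces $\frac{1}{N-1}\sum_{k}\|y_1^{k,\Delta_k,{\mathbf{v}^i}[v_0]}(s)-x_1^{k,\Delta_k,v_0}(s)\|_2^2$, which by exchangeability of the indices outside $\{i\}$ is bounded by $D_j(s)+\frac{1}{N-1}D_i(s)$ (the $\frac{1}{N-1}$ factor records the single index $k=i$ inside the sum when the sum is over $k\neq j$). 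The remaining distance between $\frac{1}{N-1}\sum_{k\neq j}\delta_{x_1^{k,\Delta_k,v_0}(s)}$ and $z[v_0](s)$ is bounded by Lemma~\ref{prop:general}, giving the bulk rate $(f(N-1))^{\frac{2q-4}{3q-4}}$ after integration in $s$.

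For the leader-level measure $\frac{1}{N}\sum_{j=1}^N\delta_{y_1^{j,\Delta_j,{\mathbf{v}^i}[v_0]}(s)}$, the treatment is the same except that the distance from $\frac{1}{N-1}\sum_{j\neq i}$ to $\frac{1}{N}\sum_j$ must also be handled; I would mimic \eqref{eq:w2:con2} via \eqref{lma:convex:wa:3} to obtain a $\frac{1}{N}$ factor multiplying $\|y_1^{i,\Delta_i,{\mathbf{v}^i}[v_0]}(s)\|_2^2$. The key estimate for the latter is the second bound in Lemma~\ref{prop:moment}(i)(b), namely \eqref{C.1.-2''}, which produces the unique $\frac{1}{N}\sup_\delta \|v_1^{i,\delta}\|_{\mathcal{L}^2}^2$ contribution in the conclusion (all other terms involving $\xi_0$, $\xi_1$, $v_0$, $u_1[v_0]$ in \eqref{C.1.-2''} are absorbed into the constant $C(n_1,q,L,T,\xi_0,\xi_1)(1+l_{u_1[v_0]}+\|v_0\|_{\mathcal{L}^q}^2+\sup_\delta\|u_1^{i,\delta}[v_0]\|_{\mathcal{L}^q}^2)$).

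Combining these estimates yields a Gronwall-ready inequality
\[
D_0(t)+D_j(t)+D_i(t)\leq C\int_0^t\big(D_0+D_j+D_i\big)(s)\md s+R_N(v_0,v_1^{i,\delta_i}),
\]
where $R_N$ is exactly the right-hand side of the proposition. Gronwall's lemma then closes the argument. The main obstacle is the bookkeeping in the coupling step for the follower-level empirical measures: the deviating player $i$ appears inside the sum indexed by $k\neq j$ (for $j\neq i$) but not in the sum indexed by $k\neq i$, and the symmetry argument used in \eqref{eq:w2:conv1} must therefore be applied only to the $N-2$ genuinely exchangeable indices $k\notin\{i,j\}$, with the single term $k=i$ peeled off and bounded separately using $D_i(s)$ with the prefactor $\frac{1}{N-1}$. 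Once this accounting is performed, the structural estimates of Lemma~\ref{prop:rate:optimal} and Theorem~\ref{thm:converge:Ntolimit} deliver the result with no new convergence rate analysis required.
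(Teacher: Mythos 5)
Your proposal is correct and follows essentially the same route as the paper: standard SDE stability estimates yielding a coupled Gr\"onwall system for the three differences, triangle-inequality decompositions of the empirical-measure Wasserstein terms with diagonal couplings as in \eqref{eq:w2:conv1}--\eqref{eq:w2:con2}, careful peeling-off of the deviating index $i$ from the sums over $k\neq j$ with a $\tfrac{1}{N-1}$ prefactor, Lemma~\ref{prop:general} for the bulk rate $(f(N-1))^{\frac{2q-4}{3q-4}}$, and the moment bounds of Lemma~\ref{prop:moment} for the $\tfrac1N\sup_\delta\|v_1^{i,\delta}\|_{\mathcal{L}^2}^2$ remainder (the paper draws this from \eqref{C.1.-1''} applied to $x_1^{i,\delta,v_0,v_1}$ rather than \eqref{C.1.-2''}, an immaterial difference). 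No gaps.
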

\begin{proof}
Similar to \eqref{add-6} as in the proof of Lemma \ref{prop:rate:optimal},
applying standard estimates to $y_0^{{\mathbf{v}^i}[v_0]}-x_0^{v_0}$, $y_1^{i,\delta_i,{\mathbf{v}^i}[v_0]}-x^{i,\delta_i,v_0,v_1}_1$ and $y_1^{j,\delta_{j},{\mathbf{v}^i}[v_0]}-x^{j,\delta_{j},v_0}_1$, we get
    \begin{align}
    \left\|y_0^{{\mathbf{v}^i}[v_0]}-x_0^{v_0}\right\|^2_{\mathcal{S}^2(0,t)}& \ \leq C(T,L)\int_0^t\mE\bigg[W^2_2\bigg(\text{\footnotesize$\frac 1N {\sum_{k=1}^N \delta_{y_1^{k, \Delta_k,{\mathbf{v}^i}[v_0]}(r)}}$},z[v_0](r)\bigg)\bigg]\md r\nonumber\\
    &\qquad+C(T,L)\int_0^t\bigg(\|y_0^{{\mathbf{v}^i}[v_0]}-x_0^{v_0}\|^2_{\mathcal{S}^2(0,r)}\bigg)\md r;\label{eq:proof:4.5:1}\\ \sup_{\delta_i\in[a,b]}\left\|y_1^{i,\delta_i,{\mathbf{v}^i}[v_0]}-x^{i,\delta_i,v_0,v_1}_1\right\|_{\mathcal{S}^2(0,t)}^2 & \ \leq C(T,L)\int_0^t\mE\bigg[W^2_2\bigg(\text{\footnotesize$\frac{1}{N-1}{\sum_{k \neq i}\delta_{y_1^{k, \Delta_k,{\mathbf{v}^i}[v_0]}(r)}}$},z[v_0](r)\bigg)\bigg]\md r  \nonumber\\ &\qquad + C(T,L)\int_0^t\sup_{\delta_i\in[a,b]}\|y_1^{i,\delta_i,{\mathbf{v}^i}[v_0]}-x^{i,\delta_i,v_0,v_1}_1\|_{\mathcal{S}^2(0,r)}^2\md r\nonumber\\ &\qquad + C(T,L)\int_0^t \|y_0^{{\mathbf{v}^i}[v_0]}-x_0^{v_0}\|^2_{\mathcal{S}^2(0,r)}\md r;\label{eq:proof:4.5:2}\\
    \sup_{\delta_j\in[a,b]}\left\|y_1^{j,\delta_j,{\mathbf{v}^i}[v_0]}-x^{j,\delta_j,v_0}_1\right\|_{\mathcal{S}^2(0,t)}^2 & \nonumber\ \leq C(T,L)\int_0^t\mE\bigg[W^2_2\bigg(\text{\footnotesize$\frac{1}{N-1}{\sum_{k \neq j}\delta_{y_1^{k, \Delta_k,{\mathbf{v}^i}[v_0]}(r)}}$},z[v_0](r)\bigg)\bigg]\md r \nonumber \\ &\qquad + C(T,L)\int_0^t\sup_{\delta_j\in[a,b]}\|y_1^{j,\delta_j,{\mathbf{v}^i}[v_0]}-x^{j,\delta_j,v_0}_1\|_{\mathcal{S}^2(0,r)}^2\md r\nonumber\\ &\qquad + C(T,L)\int_0^t \|y_0^{{\mathbf{v}^i}[v_0]}-x_0^{v_0}\|^2_{\mathcal{S}^2(0,r)}\md r.\label{eq:proof:4.5:3}
\end{align}
We now estimate \eqref{eq:proof:4.5:1}, \eqref{eq:proof:4.5:2} and \eqref{eq:proof:4.5:3} in order as follows:
\begin{enumerate}
\item[(1)] We estimate the Wasserstein metric in \eqref{eq:proof:4.5:1} as follows:
\begin{align*}
    &W^2_2\bigg(\text{\footnotesize$\frac 1N {\sum_{k=1}^N \delta_{y_1^{k, \Delta_k,{\mathbf{v}^i}[v_0]}(r)}}$},z[v_0](r)\bigg)\\\leq\ &  3W^2_2\bigg(\text{\footnotesize$\frac 1N {\sum_{k=1}^N \delta_{y_1^{k, \Delta_k,{\mathbf{v}^i}[v_0]}(r)}},\frac 1N {\sum_{k\neq i} \delta_{x_1^{k, \Delta_k,v_0}(r)}}+\frac1N\delta_{x_1^{i, \Delta_i,v_0,v_1}(r)}$}\bigg)\\
    &+3W^2_2\bigg(\text{\footnotesize$\frac 1N {\sum_{k\neq i} \delta_{x_1^{k, \Delta_k,v_0}(r)}}+\frac1N\delta_{x_1^{i, \Delta_i,v_0,v_1}(r)},\frac {1}{N-1} {\sum_{k\neq i} \delta_{x_1^{k, \Delta_k,v_0}(r)}}$}\bigg)\\
    &+3W^2_2\bigg(\text{\footnotesize$\frac {1}{N-1} {\sum_{k\neq i} \delta_{x_1^{k, \Delta_k,v_0}(r)}}$},z[v_0](r)\bigg).
\end{align*}
Similar to \eqref{eq:w2:conv1}, using \eqref{lma:convex:wa:2} in Lemma \ref{lma:convex:wa}, we have
\begin{align*}  &\mE\bigg[W^2_2\bigg(\text{\footnotesize$\frac 1N {\sum_{k=1}^N \delta_{y_1^{k, \Delta_k,{\mathbf{v}^i}[v_0]}(r)}},\ \frac 1N {\sum_{k\neq i} \delta_{x_1^{k, \Delta_k,v_0}(r)}}+\frac1N\delta_{x_1^{i, \Delta_i,v_0,v_1}(r)}$}\bigg)\bigg]\\
    \leq \ & \frac{N-1}{N}\sup_{\delta_j\in[a,b]}\left\|y_1^{j,\delta_j,{\mathbf{v}^i}[v_0]}-x^{j,\delta_j,v_0}_1\right\|_{\mathcal{S}^2(0,r)}^2+\frac1N \sup_{\delta_i\in[a,b]}\left\|y_1^{i,\delta_i,{\mathbf{v}^i}[v_0]}-x^{i,\delta_i,v_0,v_1}_1\right\|_{\mathcal{S}^2(0,r)}^2.
\end{align*}
Similar to \eqref{eq:w2:con2}, using \eqref{lma:convex:wa:3} in Lemma \ref{lma:convex:wa}, we obtain
\begin{align*}
    &\mE\bigg[W^2_2\bigg(\text{\footnotesize $\frac 1N {\sum_{k\neq i} \delta_{x_1^{k, \Delta_k,v_0}(r)}}+\frac1N\delta_{x_1^{i, \Delta_i,v_0,v_1}(r)},\frac {1}{N-1} {\sum_{k\neq i} \delta_{x_1^{k, \Delta_k,v_0}(r)}}$ } \bigg)\bigg]\\ 
    =\ & \mE\bigg[W^2_2\bigg(\text{\footnotesize $\frac {N-1}N \frac{1}{N-1} {\sum_{k\neq i} \delta_{x_1^{k, \Delta_k,v_0}(r)}}+ \frac{1}{N} \delta_{x_1^{i, \Delta_i,v_0,v_1}(r)},\frac {1}{N-1} {\sum_{k\neq i} \delta_{x_1^{k, \Delta_k,v_0}(r)}}$ } \bigg)\bigg]\\
    \leq\ &\frac{N-1}{N}\cdot0+ \frac1N\mE\bigg[W^2_2\bigg( \text{\footnotesize $ \delta_{x_1^{i, \Delta_i,v_0,v_1}(r)},\ \frac {1}{N-1} {\sum_{k\neq i} \delta_{x_1^{k, \Delta_k,v_0}(r)}}  $}\bigg)\bigg]\\
    \leq\ & \frac1N\left\|x_1^{i, \Delta_i,v_0,v_1}(r)-x_1^{j, \Delta_j,v_0}(r)\right\|_2^2\leq \frac2N\left(\sup_{\delta\in[a,b]} \left\|x_1^{i, \delta,v_0,v_1}\right\|^2_{\sr^2}+\sup_{\delta\in[a,b]} \left\|x_1^{j, \delta,v_0}\right\|^2_{\sr^2}\right).
\end{align*}
Substituting the last three estimates into \eqref{eq:proof:4.5:1}, we get
\begin{align}
     \left\|y_0^{{\mathbf{v}^i}[v_0]}-x_0^{v_0}\right\|^2_{\mathcal{S}^2(0,t)}\leq  \ &C(T,L)\int_0^t\sup_{\delta_j\in[a,b]}\left\|y_1^{j,\delta_j,{\mathbf{v}^i}[v_0]}-x^{j,\delta_j,v_0}_1\right\|_{\mathcal{S}^2(0,r)}^2\md r\nonumber\\
    &+  \frac1N C(T,L)\int_0^t\sup_{\delta_i\in[a,b]}\left\|y_1^{i,\delta_i,{\mathbf{v}^i}[v_0]}-x^{i,\delta_i,v_0,v_1}_1\right\|_{\mathcal{S}^2(0,r)}^2\md r\nonumber\\
    &+C(T,L)\int_0^t\mE\bigg[W^2_2\bigg(\text{\footnotesize$\frac {1}{N-1} {\sum_{k\neq i} \delta_{x_1^{k, \Delta_k,v_0}(r)}}$},z[v_0](r)\bigg)\bigg]\md r\nonumber\\
    &+\frac1N C(T,L)\left(\sup_{\delta\in[a,b]} \left\|x_1^{i, \delta,v_0,v_1}\right\|^2_{\sr^2}+\sup_{\delta\in[a,b]} \left\|x_1^{j, \delta,v_0}\right\|^2_{\sr^2}\right).\label{eq:proof:4.5:4}
\end{align}
    \item [(2)]We next estimate the Wasserstein metric in \eqref{eq:proof:4.5:2} as follows: 
    \begin{align}
        &\mE\bigg[W^2_2\bigg(\text{\footnotesize$\frac{1}{N-1}{\sum_{k \neq i}\delta_{y_1^{k, \Delta_k,{\mathbf{v}^i}[v_0]}(r)}}$},z[v_0](r)\bigg)\bigg]\nonumber\\
        \leq\ &2\mE\bigg[W^2_2\bigg(\text{\footnotesize$\frac{1}{N-1}{\sum_{k \neq i}\delta_{y_1^{k, \Delta_k,{\mathbf{v}^i}[v_0]}(r)}},\frac{1}{N-1}{\sum_{k \neq i}\delta_{x_1^{k, \Delta_k,v_0}(r)}}$}\bigg)\bigg]+2\mE\bigg[W^2_2\bigg(\text{\footnotesize$\frac{1}{N-1}{\sum_{k \neq i}\delta_{x_1^{k, \Delta_k,v_0}(r)}}$},z[v_0](r)\bigg)\bigg]\nonumber\\
        \leq \ & 2\sup_{\delta_j\in[a,b]}\left\|y_1^{j,\delta_j,{\mathbf{v}^i}[v_0]}-x^{j,\delta_j,v_0}_1\right\|_{\mathcal{S}^2(0,r)}^2+2\mE\bigg[W^2_2\bigg(\text{\footnotesize$\frac{1}{N-1}{\sum_{k \neq i}\delta_{x_1^{k, \Delta_k,v_0}(r)}}$},z[v_0](r)\bigg)\bigg],\label{eq:proof:J:diff}
    \end{align}
    where the second inequality uses \eqref{lma:convex:wa:2} in Lemma \ref{lma:convex:wa}. Substituting the last estimate into \eqref{eq:proof:4.5:2}, we obtain
    \begin{align} \sup_{\delta_i\in[a,b]}\left\|y_1^{i,\delta_i,{\mathbf{v}^i}[v_0]}-x^{i,\delta_i,v_0,v_1}_1\right\|_{\mathcal{S}^2(0,t)}^2  \leq\ & C(T,L)\int_0^t\sup_{\delta_j\in[a,b]}\left\|y_1^{j,\delta_j,{\mathbf{v}^i}[v_0]}-x^{j,\delta_j,v_0}_1\right\|_{\mathcal{S}^2(0,r)}^2\md r  \nonumber\\  
     &+C(T,L)
     \int_0^t\mE\bigg[W^2_2\bigg(\text{\footnotesize$\frac {1}{N-1} {\sum_{k\neq i} \delta_{x_1^{k, \Delta_k,v_0}(r)}}$},z[v_0](r)\bigg)\bigg]\md r\nonumber\\
    & + C(T,L)\int_0^t \|y_0^{{\mathbf{v}^i}[v_0]}-x_0^{v_0}\|^2_{\mathcal{S}^2(0,r)}\md r.
  \label{eq:proof:4.5:5}
    \end{align}
 \item [(3)] Finally, we estimate the Wasserstein metric in \eqref{eq:proof:4.5:3} as follows:
 \begin{align*}
     &\mE\bigg[W^2_2\bigg(\text{\footnotesize$\frac {1}{N-1} {\sum_{k\neq i} \delta_{x_1^{k, \Delta_k,v_0}(r)}}$},z[v_0](r)\bigg)\bigg]\\
     \leq\ & 2\mE\bigg[W^2_2\bigg(\text{\footnotesize$\frac {1}{N-1} {\sum_{k\neq i} \delta_{x_1^{k, \Delta_k,v_0}(r)}},\frac {1}{N-1} {\sum_{k\neq i,k\neq j} \delta_{x_1^{k, \Delta_k,v_0}(r)}}+\frac {1}{N-1}\delta_{x_1^{i, \Delta_i,v_0,v_1}(r)}$}\bigg)\bigg]\\
     &+2\mE\bigg[W^2_2\bigg(\text{\footnotesize$\frac {1}{N-1} {\sum_{k\neq i,k\neq j} \delta_{x_1^{k, \Delta_k,v_0}(r)}}+\frac {1}{N-1}\delta_{x_1^{i, \Delta_i,v_0,v_1}(r)}$},z[v_0](r)\bigg)\bigg]\\
     \leq \ & 2\frac{N-2}{N-1}\sup_{\delta_j\in[a,b]}\left\|y_1^{j,\delta_j,{\mathbf{v}^i}[v_0]}-x^{j,\delta_j,v_0}_1\right\|_{\mathcal{S}^2(0,r)}^2+\frac{2}{N-1}\sup_{\delta_i\in[a,b]}\left\|y_1^{i,\delta_i,{\mathbf{v}^i}[v_0]}-x^{i,\delta_i,v_0,v_1}_1\right\|_{\mathcal{S}^2(0,r)}^2\\
     &+2\frac{N-2}{N-1}\mE\bigg[W^2_2\bigg(\text{\footnotesize$\frac {1}{N-2} {\sum_{k\neq i,k\neq j} \delta_{x_1^{k, \Delta_k,v_0}(r)}}$},z[v_0](r)\bigg)\bigg]+\frac{2}{N-1}\mE\bigg[\delta_{x_1^{i, \Delta_i,v_0,v_1}(r)},z[v_0](r)\bigg],
 \end{align*}
  where the last inequality uses \eqref{lma:convex:wa:3} in Lemma \ref{lma:convex:wa}. Moreover,
 \begin{align*}
   \mE\bigg[\delta_{x_1^{i, \Delta_i,v_0,v_1}(r)},z[v_0](r)\bigg]\leq\ & 2\left\|x_1^{i, \Delta_i,v_0,v_1}(r)\right\|_2^2 +2\mE\left[\mathcal{M}^2_2(z[v_0](r))\right]\\
   \leq \ &2\left\|x_1^{i, \Delta_i,v_0,v_1}(r)\right\|_2^2 +2\sup_{\delta\in[a,b]}\left\|x_1^{i, \delta, v_0}(r)\right\|^2_2.
 \end{align*}
 Substituting the last two estimates into \eqref{eq:proof:4.5:3}, we have
 \begin{align}    \sup_{\delta_j\in[a,b]}\left\|y_1^{j,\delta_j,{\mathbf{v}^i}[v_0]}-x^{j,\delta_j,v_0}_1\right\|_{\mathcal{S}^2(0,t)}^2  \leq\ &  \frac1N C(T,L)\int_0^t\sup_{\delta_i\in[a,b]}\left\|y_1^{i,\delta_i,{\mathbf{v}^i}[v_0]}-x^{i,\delta_i,v_0,v_1}_1\right\|_{\mathcal{S}^2(0,r)}^2\md r
    \nonumber\\
    &+ \frac1N C(T,L)\left(\sup_{\delta\in[a,b]} \left\|x_1^{i, \delta,v_0,v_1}\right\|^2_{\sr^2}+\sup_{\delta\in[a,b]} \left\|x_1^{j, \delta,v_0}\right\|^2_{\sr^2}\right)
   \nonumber\\
    & + C(T,L)\int_0^t\mE\bigg[W^2_2\bigg(\text{\footnotesize$\frac {1}{N-2} {\sum_{k\neq i,k\neq j} \delta_{x_1^{k, \Delta_k,v_0}(r)}}$},z[v_0](r)\bigg)\bigg]\md r \nonumber\\  
    & +C(T,L)\int_0^t \|y_0^{{\mathbf{v}^i}[v_0]}-x_0^{v_0}\|^2_{\mathcal{S}^2(0,r)}\md r.
    \label{eq:proof:4.5:6}
 \end{align}
\end{enumerate}
Summing up \eqref{eq:proof:4.5:4}, \eqref{eq:proof:4.5:5} and \eqref{eq:proof:4.5:6}, and applying  Gr\"onwall's inequality, we obtain 
\begin{align*}
  &\left\|y_0^{{\mathbf{v}^i}[v_0]}-x_0^{v_0}\right\|^2_{\mathcal{S}^2}+\sup_{\delta_j\in[a,b]}\left\|y_1^{j,\delta_j,{\mathbf{v}^i}[v_0]}-x^{j,\delta_j,v_0}_1\right\|_{\mathcal{S}^2}^2 +\sup_{\delta_i\in[a,b]}\left\|y_1^{i,\delta_i,{\mathbf{v}^i}[v_0]}-x^{i,\delta_i,v_0,v_1}_1\right\|_{\mathcal{S}^2}^2  \\
  \leq \ & C(T,L)
     \int_0^T\mE\bigg[W^2_2\bigg(\text{\footnotesize$\frac {1}{N-1} {\sum_{k\neq i} \delta_{x_1^{k, \Delta_k,v_0}(r)}}$},z[v_0](r)\bigg)\bigg]\md r\ &\\
  &+ C(T,L)\int_0^T\mE\bigg[W^2_2\bigg(\text{\footnotesize$\frac {1}{N-2} {\sum_{k\neq i,k\neq j} \delta_{x_1^{k, \Delta_k,v_0}(r)}}$},z[v_0](r)\bigg)\bigg]\md r\\
   &+\frac1N C(T,L)\left(\sup_{\delta\in[a,b]} \left\|x_1^{i, \delta,v_0,v_1}\right\|^2_{\sr^2}+\sup_{\delta\in[a,b]} \left\|x_1^{j, \delta,v_0}\right\|^2_{\sr^2}\right)\\
   \leq\ & C(n_1,q,L,T,\xi_0,\xi_1)\bigg(1+l_{u_1[v_0]}+\|v_0\|_{\mathcal{L}^q}^2+\sup_{\delta\in[a,b]}\|u_1^{i,\delta}[v_0]\|_{\mathcal{L}^q}^2\bigg)(f(N-1))^{\frac{2q-4}{3q-4}}\\
   &+\frac1N C(L,T)\sup_{\delta\in[a,b]}\left\|v_1^{i,\delta}\right\|_{\mathcal{L}^2}.
\end{align*}
\end{proof}

\subsection{A class of sub-cases with standard convergence rate}\label{sec:example}

By now, we have established the convergence rates of state process of $N$-player system towards the mean-field system, which depends on the dimension $n_1$ in view of the usage of Lemma~\ref{lemma:wasserstien}. In this section, we also provide a class of sub-cases with a faster and standard convergence rate, i.e., an $\mathcal{O}\left(\frac{1}{\sqrt{N}}\right)$-convergence of the norm $\left\|y_0^{\mathbf{u}[v_0]}-x_0^{v_0}\right\|_{\mathcal{S}^2}$ and $ \left\|y_1^{i,\delta,\mathbf{u}[v_0]}-x_1^{i,\delta,v_0}\right\|_{\mathcal{S}^2}$, which is independent of the dimension $n_1$. This kind of sub-cases require the drift coefficients and the diffusion coefficients to be linear in the distribution argument, see Assumption~\ref{assumption:exp} below. Such convergence rate is also obtained in \cite{Nourian-Caines} for MFGs with a major and $N$ minor players with the coefficients of the form \eqref{intr_1} in contrast against the dominating player as considered in our article. 

\begin{assumption}\label{assumption:exp}
    The drift coefficients $g_0$ and $g_1$ and the diffusion coefficients $\sigma_0$ and $\sigma_1$ are of the following forms: there exist maps 
    \begin{align*}
        &\overline{g}^0_0 (y):\RR^{n_1}\to \RR^{n_0},&&\overline{g}^1_0:\RR^{n_0}\times\RR^{p_0}\to\RR^{n_0}, \\
        &\overline{\sigma}^0_0 (y):\RR^{n_1}\to \RR^{n_0\times d_0},&&\overline{\sigma}^1_0:\RR^{n_0}\times\RR^{p_0}\to\RR^{n_0\times d_0},\\
        &\overline{g}^0_1 (y):\RR^{n_1}\to \RR^{n_1}, &&\overline{g}^1_1: \RR^{n_1}\times\RR^{p_1}\times\RR^{n_0}\to\RR^{n_1},\\
        &\overline{\sigma}^0_1 (y):\RR^{n_1}\to \RR^{n_1\times d_1}, &&\overline{\sigma}^1_1: \RR^{n_1}\times\RR^{p_1}\times\RR^{n_0}\to\RR^{n_1\times d_1}
    \end{align*}
    such that
    \begin{align*}
        g_0(x_0,z,v_0):=\ & \int_{\RR^{n_1}} \overline{g}^0_0 (y)z(\md y) + \overline{g}^1_0 (x_0,v_0),\\
        \sigma_0(x_0,z,v_0):=\ & \int_{\RR^{n_1}} \overline{\sigma}^0_0 (y)z(\md y) + \overline{\sigma}^1_0 (x_0,v_0),\\
        g_1(x,z,v,y_0):=\ & \int_{\RR^{n_1}} \overline{g}^0_1 (y)z(\md y) + \overline{g}^1_1 (x,v,x_0),\\
        \sigma_1(x,z,v,y_0):=\ & \int_{\RR^{n_1}} \overline{\sigma}^0_1 (y)z(\md y) + \overline{\sigma}^1_1 (x,v,x_0),
    \end{align*}
    and the functions $\overline{g}^0_0$, $\overline{g}^1_0$, $\overline{g}^0_1$, $\overline{g}^1_1$,  $\overline{\sigma}^0_0$, $\overline{\sigma}^1_0$, $\overline{\sigma}^0_1$ and $\overline{\sigma}^1_1$ are $L$-Lipschitz continuous in all their arguments.     
\end{assumption}

The functions $\overline{g}^0_0$, $\overline{g}^0_1$,   $\overline{\sigma}^0_0$ and $\overline{\sigma}^0_1$ in Assumption~\ref{assumption:exp} can be seen as  the generating kernels of the coefficients $g_0$, $g_1$, $\sigma_0$ and $\sigma_1$, respectively. Under the particular setting with Assumption~\ref{assumption:exp}, the SDEs for $y_0^{\mathbf{u}[v_0]}(\cdot)$ and $y_1^{i,\delta_i,\mathbf{u}[v_0]}(\cdot)$ also write
{\small
\begin{align}
        &\left\{
            \begin{aligned}
                \md y_0^{\mathbf{u}[v_0]}(t) & =\bigg[\frac{1}{N}\sum_{j=1}^N \overline{g}^0_0 \left(y_1^{j,\Delta_j,\mathbf{u}[v_0]}(t)\right)+\overline{g}^1_0\left(y_0^{\mathbf{u}[v_0]}(t), v_0(t)\right) \bigg] \md t\\
                &\,\, +\bigg[\frac{1}{N}\sum_{j=1}^N \overline{\sigma}_0 \left(y_1^{j,\Delta_j,\mathbf{u}[v_0]}(t)\right)+\overline{\sigma}^1_0\left(y_0^{\mathbf{u}[v_0]}(t), v_0(t)\right) \bigg] \md W_0(t), \quad t\in(0,T],\\
		        y_0^{\mathbf{u}[v_0]}(t) & =\xi_0(t), \quad t \in[-b, 0];\label{eq:y_0:4.3}
            \end{aligned}        
        \right.\\
        &\left\{
            \begin{aligned}
                \md y_1^{i,\delta_i,\mathbf{u}[v_0]}(t) & =\bigg[\frac{1}{N-1}\sum_{j\neq i} \overline{g}^0_1 \left(y_1^{j,\Delta_j,\mathbf{u}[v_0]}(t)\right)+\overline{g}^1_1\bigg(y_1^{i,\delta_i,\mathbf{u}[v_0]}(t), u_1^{i, \delta_i}[v_0](t), y_0^{\mathbf{u}[v_0]}\left(t-\delta_i\right)\bigg)\bigg] \md t\\
                &\,\, +\bigg[\frac{1}{N-1}\sum_{j\neq i} \overline{\sigma}^0_1 \left(y_1^{j,\Delta_j,\mathbf{u}[v_0]}(t)\right)+\overline{\sigma}^1_1\bigg(y_1^{i,\delta_i,\mathbf{u}[v_0]}(t), u_1^{i, \delta_i}[v_0](t), y_0^{\mathbf{u}[v_0]}\left(t-\delta_i\right)\bigg)\bigg] \md W_1^i(t), \quad t\in(0,T],\\
		y_1^{i,\delta_i,\mathbf{u}[v_0]}(0) & =\xi_1^i.
            \end{aligned}
        \right.\label{eq:y_1:4.3}
\end{align}
}

We have the following estimate on the norms of $y_0^{\mathbf{u}[v_0]}(t)- x_0^{v_0}(t)$ and $y_1^{i,\delta_i,\mathbf{u}[v_0]}- x_1^{i,\delta_i,v_0}(t)$ under the particular setting with Assumption~\ref{assumption:exp}, whose proof is given in Appendix~\ref{pf:prop:example}.

\begin{proposition}\label{prop:example}
    Under the particular setting with Assumption~\ref{assumption:exp}, we have 
    \begin{align}\label{exp:thm_0}
   & \left\|y_0^{\mathbf{u}[v_0]}-x_0^{v_0}\right\|^2_{\mathcal{S}^2} + \sup_{\delta\in[a,b]} \left\|y_1^{i,\delta,\mathbf{u}[v_0]}-x_1^{i,\delta,v_0}\right\|^2_{\mathcal{S}^2}\nonumber\\
    \le \ &C(n_1,L,T,\xi_0,\xi_1)\bigg(1+\|v_0\|_{\mathcal{L}^2}^2+\sup_{\delta\in[a,b]}\left\|u_1^{i,\delta}[v_0]\right\|_{\mathcal{L}^2}^2\bigg)\frac{1}{N}.
    \end{align}
\end{proposition}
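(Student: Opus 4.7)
\bigskip
\noindent\textbf{Proof proposal.} The plan is to exploit the additive splitting in Assumption~\ref{assumption:exp} so that the empirical distribution enters only through linear functionals of the followers' states. Substituting this splitting into \eqref{eq:y_0:4.3}--\eqref{eq:y_1:4.3} and comparing with the mean field SDEs \eqref{eq:x_0}--\eqref{eq:x_1^i}, the drift and diffusion differences decompose into (i) Lipschitz state-difference terms (in $y_0^{\mathbf{u}[v_0]}-x_0^{v_0}$, $y_1^{j,\Delta_j,\mathbf{u}[v_0]}-x_1^{j,\Delta_j,v_0}$, and the delay $y_0^{\mathbf{u}[v_0]}(\cdot-\delta_i)-x_0^{v_0}(\cdot-\delta_i)$) handled through Gr\"onwall; and (ii) ``mean field error'' terms of the form $\tfrac{1}{N}\sum_{j=1}^N \overline{g}^0_0(x_1^{j,\Delta_j,v_0}(t)) - \int \overline{g}^0_0(y)\,z[v_0](t)(dy)$ and their analogues for $\overline{\sigma}^0_0,\overline{g}^0_1,\overline{\sigma}^0_1$ (with $\tfrac{1}{N-1}\sum_{j\neq i}$ for the follower equation). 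These second pieces are produced by adding and subtracting the mean-field centers $x_1^{j,\Delta_j,v_0}$ inside the empirical sums.

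The crux is bounding each such centered empirical mean by $\mathcal{O}(1/N)$ in $\mathcal{L}^2$, with a dimension-free constant. I will condition on $\mathcal{H}_t := \mathcal{F}^0_t\vee\mathcal{F}^z_t$: since the triples $(\xi_1^j, W_1^j, \Delta_j)_{j=1}^N$ remain iid and jointly independent of $\mathcal{H}_t$, the random variables $(x_1^{j,\Delta_j,v_0}(t))_{j=1}^N$ are iid given $\mathcal{H}_t$. To identify their common conditional law with $z[v_0](t)$, I apply Lemma~\ref{lma:property:probability} with $\mathscr{C}_1=\mathcal{F}^0_{t-\delta}\vee\mathcal{F}^z_t$, $\mathscr{C}_2=\mathcal{H}_t$, $\mathscr{G}=\mathcal{F}^{1,j}_t$ to obtain $\mathbb{P}^{\mathcal{H}_t}_{x_1^{j,\delta,v_0}(t)} = \mathbb{P}^{\mathcal{F}^0_{t-\delta}\vee\mathcal{F}^z_t}_{x_1^{j,\delta,v_0}(t)}$, and then integrate over $\Delta_j$ (independent of $\mathcal{H}_t$ with law $\pi_\Delta$) to conclude $\mathbb{P}^{\mathcal{H}_t}_{x_1^{j,\Delta_j,v_0}(t)} = z[v_0](t)$ by the fixed-point property \eqref{def:z'}. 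Conditional independence together with the Lipschitz bound on $\overline{g}^0_0$ then yields
\[
\mathbb{E}\left|\tfrac{1}{N}\sum_{j=1}^N \overline{g}^0_0(x_1^{j,\Delta_j,v_0}(t)) - \textstyle\int \overline{g}^0_0(y)\,z[v_0](t)(dy)\right|^2 \le \frac{C}{N}\sup_{\delta\in[a,b]}\bigl(1+\|x_1^{1,\delta,v_0}(t)\|_2^2\bigr),
\]
and analogously for $\overline{\sigma}^0_0, \overline{g}^0_1,\overline{\sigma}^0_1$; the $\tfrac{1}{N-1}$-vs-$\tfrac{1}{N}$ mismatch in the follower's empirical sum contributes an additional $\mathcal{O}(1/N)$, exactly as in \eqref{eq:w2:con2}.

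Substituting these bounds into the standard $\mathcal{S}^2$ SDE-comparison estimate (BDG on the stochastic integrals; Lipschitz continuity of $\overline{g}^1_0,\overline{\sigma}^1_0,\overline{g}^1_1,\overline{\sigma}^1_1$ for the non-distribution parts; Lemma~\ref{prop:moment} with Hypothesis~\ref{assumption:integral} for the a priori $\mathcal{S}^2$-bound on $x_1^{1,\delta,v_0}$) produces a linear integral inequality
\[
A(t) \le \frac{C}{N}\Bigl(1+\|v_0\|_{\mathcal{L}^2}^2+\sup_{\delta\in[a,b]}\|u_1^{i,\delta}[v_0]\|_{\mathcal{L}^2}^2\Bigr) + C\int_0^t A(s)\,ds
\]
for $A(t):=\|y_0^{\mathbf{u}[v_0]}-x_0^{v_0}\|_{\mathcal{S}^2(0,t)}^2 + \sup_{\delta\in[a,b]}\|y_1^{i,\delta,\mathbf{u}[v_0]}-x_1^{i,\delta,v_0}\|_{\mathcal{S}^2(0,t)}^2$; Gr\"onwall's inequality then closes \eqref{exp:thm_0}.

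The main obstacle is the filtration bookkeeping in the conditional law-of-large-numbers step: without Lemma~\ref{lma:property:probability} one cannot identify the conditional law of $x_1^{j,\delta,v_0}(t)$ given the enlarged $\sigma$-algebra $\mathcal{H}_t$ with that given $\mathcal{F}^0_{t-\delta}\vee\mathcal{F}^z_t$. This identification is precisely what replaces the Wasserstein-empirical rate of Lemma~\ref{lemma:wasserstien} by a dimension-free variance bound, explaining why \eqref{exp:thm_0} is independent of $n_1$ and of the law of $\Delta$.
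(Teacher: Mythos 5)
Your proposal is correct and follows essentially the same route as the paper's proof in Appendix C: add and subtract the mean-field centers inside the empirical sums, reduce the distribution-dependent part to centered fluctuation terms, use Lemma~\ref{lma:property:probability} to shift the delay-dependent conditioning $\mathcal{F}^0_{t-\delta}\vee\mathcal{F}^z_t$ up to $\mathcal{F}^0_t\vee\mathcal{F}^z_t$, obtain the dimension-free $\mathcal{O}(1/N)$ variance bound, and close with Gr\"onwall. The only difference is packaging: you phrase the key step as a conditional i.i.d.\ law-of-large-numbers given $\mathcal{H}_t$, while the paper writes out the same fact as pairwise orthogonality $\mathbb{E}[\eta^{g,j}_s\eta^{g,j'}_s]=0$ of explicitly defined fluctuation terms.
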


\section{Approximate Stackelberg Equilibrium}\label{sec:main}
The aim of this section is to employ the results obtained in the previous section to demonstrate that a solution of the limiting Stackelberg game yields an approximate solution tothe $N$-Player Stackelberg game.  Section \ref{sec:appro:1} establishes that, for a fixed $v_0$, the optimal strategy in the limiting game induces an $\epsilon$ Nash equilibrium  for the $N$-player game, see Definition \ref{def:equili}. Section \ref{sec:appro:2} further shows  that $u_0$ serves as an approximate Stackelberg Equilibrium.

\subsection{Approximate Nash equilibrium with a fixed $v_0$}\label{sec:appro:1}
In this section, we assume that Player $0$ takes a fixed control $v_0(\cdot) \in \mathcal{L}_{\mathcal{F}^0}^q\left([0, T] ; \mathbb{R}^{p_0}\right)$. We establish a central result showing that, when the optimal control derived from the limiting game is applied uniformly to all players in the original $N$-player game, the resulting strategy constitutes an approximate Nash equilibrium. 

We begin by introducing the following hypothesis (which will be proven in a separate paper \cite{?}) concerning the $\mathcal{L}^2$-norm of the optimal control $\mathbf{u}[v_0]$:
\begin{hypothesis}\label{hyp:kappa}
There exists a constant $\widehat{C}>0$, such that for any given $v_0(\cdot) \in \mathcal{L}_{\mathcal{F}^0}^q\left([0, T] ; \mathbb{R}^{p_0}\right)$,
\begin{align*}
    \sup_{\delta\in[a,b]}\left\|u_1^{i,\delta}[v_0]\right\|_{\mathcal{L}^2}^2\leq \widehat{C} \left(1 + \| v_0\|_{\lr^2}^2\right),\quad 1\le i\le N.
\end{align*}
\end{hypothesis}

Following \cite{Carmona-Zhu}, we define the sets of admissible controls  as follows.

\begin{definition}[$(\kappa,v_0)$-admissible control set for followers]\label{def:U_i}
    For a given $v_0(\cdot) \in \mathcal{L}_{\mathcal{F}^0}^q\left([0, T] ; \mathbb{R}^{p_0}\right)$ and a constant $\kappa>\widehat{C}$, we define the admissible control set $\mathcal{U}_i(\kappa,v_0)$ for Player $i$ as 
    \begin{align*}
        \mathcal{U}_i(\kappa,v_0):= \bigg\{v^{i,\delta_i}_1\in \mathcal{L}_{\mathcal{G}^{i,\delta_i}}^2\left([0, T] ; \mathbb{R}^{p_1}\right):\sup_{\delta\in[a,b]}\left\|v_1^{i,\delta}\right\|_{\mathcal{L}^2}^2\leq \kappa \left(1 + \| v_0\|_{\lr^2}^2\right) \bigg\}.
    \end{align*}
\end{definition}
We begin with giving an upper bound for $\left|\mathcal{J}^{i, \delta_i, N}({\mathbf{v}^i}[v_0])- \mathcal{J}^{i, \delta_i}(v_1^{i, \delta_i}; v_0,z[v_0])\right|$.

\begin{proposition} \label{prop:J:diff}
Under Assumptions \ref{assumption:initial}-\ref{assump:lip} and Hypothesis \ref{assumption:integral}, 
  suppose that  the $j$-th player $(j\neq i)$ adopts the optimal controls $u^{j,\delta_j}_1[v_0]$ while the $i$-th player adopts an arbitrary control $v^{i,\delta_i}_1\in \mathcal{L}_{\mathcal{G}^{i,\delta_i}}^2\left([0, T] ; \mathbb{R}^{p_1}\right)$. Then we have the following estimate:
    \begin{align*}
    &\left|\mathcal{J}^{i, \delta_i, N}({\mathbf{v}^i}[v_0])- \mathcal{J}^{i, \delta_i}(v_1^{i, \delta_i}; v_0,z[v_0])\right|\\
    \leq\ &  C(n_1,q,L,T,\xi_0,\xi_1)\bigg(1+l_{u_1[v_0]}+\|v_0\|_{\mathcal{L}^q}^2+\sup_{\delta\in[a,b]}\left\|u_1^{i,\delta}[v_0]\right\|_{\mathcal{L}^q}^2+\sup_{\delta\in[a,b]}\left\|v_1^{i,\delta}\right\|_{\mathcal{L}^2}^2\bigg)(f(N-1))^{\frac{q-2}{3q-4}}.
\end{align*}
\end{proposition}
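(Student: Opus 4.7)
The plan is to write the cost difference as an expectation over a time integral plus a terminal term, then exploit the quadratic Lipschitz estimates for $f_1$ and $h_1$ in Assumption~\ref{assump:lip}(iii), and finally reduce the bound to three quantities already controlled in Section~\ref{sec:convergence}: the state discrepancies $\bigl\|y_0^{{\mathbf{v}^i}[v_0]}-x_0^{v_0}\bigr\|_{\mathcal{S}^2}$, $\bigl\|y_1^{i,\delta_i,{\mathbf{v}^i}[v_0]}-x_1^{i,\delta_i,v_0,v_1}\bigr\|_{\mathcal{S}^2}$, and the Wasserstein error $\mathbb{E}\bigl[W_2^2\bigl(\tfrac{1}{N-1}\sum_{j\neq i}\delta_{y_1^{j,\Delta_j,{\mathbf{v}^i}[v_0]}(t)},\,z[v_0](t)\bigr)\bigr]$.

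First I would apply the quadratic Lipschitz bound from Assumption~\ref{assump:lip}(iii) pointwise to $f_1$ and $h_1$. This produces a product of two factors: a \emph{growth factor} that is affine in the moduli of the state processes, the controls $v_1^{i,\delta_i}$, $v_0$, and the second moments $\mathcal{M}_2$ of the two measure arguments; and a \emph{difference factor} that is the sum of $|y_1^{i,\delta_i,{\mathbf{v}^i}[v_0]}(t)-x_1^{i,\delta_i,v_0,v_1}(t)|$, the Wasserstein distance between the empirical measure and $z[v_0](t)$, and $|y_0^{{\mathbf{v}^i}[v_0]}(t-\delta_i)-x_0^{v_0}(t-\delta_i)|$. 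Integrating in time and invoking the Cauchy--Schwarz inequality splits the bound into a product of two $\mathcal{L}^2$-type norms.

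Next I would control the growth factor using the moment estimates in Lemma~\ref{prop:moment}, in particular \eqref{C.1.hat.x_1}, \eqref{C.1.-2'}, \eqref{C.1.-2''}, together with \eqref{eq:z:estimate} and \eqref{eq:y/N:estimate} for the $\mathcal{M}_2$ terms; this yields a uniform bound of the form $C(L,T,\xi_0,\xi_1)\bigl(1+\|v_0\|_{\mathcal{L}^2}^2+\sup_\delta\|u_1^{i,\delta}[v_0]\|_{\mathcal{L}^2}^2+\sup_\delta\|v_1^{i,\delta}\|_{\mathcal{L}^2}^2\bigr)^{1/2}$. For the difference factor, the state terms $\bigl\|y_0^{{\mathbf{v}^i}[v_0]}-x_0^{v_0}\bigr\|_{\mathcal{S}^2}^2$ and $\bigl\|y_1^{i,\delta_i,{\mathbf{v}^i}[v_0]}-x_1^{i,\delta_i,v_0,v_1}\bigr\|_{\mathcal{S}^2}^2$ are handled by Proposition~\ref{prop:estimate:non-optimal}. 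For the Wasserstein contribution, I use the triangle inequality to reduce to the sum of a ``coupling" term bounded by $\sup_{\delta_j}\bigl\|y_1^{j,\delta_j,{\mathbf{v}^i}[v_0]}-x_1^{j,\delta_j,v_0}\bigr\|_{\mathcal{S}^2}^2$ (again from Proposition~\ref{prop:estimate:non-optimal}; compare the computation leading to \eqref{eq:proof:J:diff}) plus the pure empirical-to-limit error $\mathbb{E}\bigl[W_2^2\bigl(\tfrac{1}{N-1}\sum_{j\neq i}\delta_{x_1^{j,\Delta_j,v_0}(t)},\,z[v_0](t)\bigr)\bigr]$, which is controlled by Lemma~\ref{prop:general} with rate $(f(N-1))^{\frac{2q-4}{3q-4}}$. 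Assembling the two factors through Cauchy--Schwarz and taking square roots converts the rate $(f(N-1))^{\frac{2q-4}{3q-4}}$ inside the squared norm into the announced $(f(N-1))^{\frac{q-2}{3q-4}}$; the $\tfrac{1}{N}\sup_\delta\|v_1^{i,\delta}\|_{\mathcal{L}^2}^2$ remainder from Proposition~\ref{prop:estimate:non-optimal} is absorbed since $\tfrac{1}{N}\le (f(N-1))^{\frac{q-2}{3q-4}}$ for all $n_1\ge 1$.

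The main obstacle is keeping the bookkeeping of norms consistent: the quadratic Lipschitz estimate forces a product structure in which one factor contains $L^2$-moments of \emph{eight} different processes (two state processes, their corresponding mean-field counterparts, the control $v_1^{i,\delta_i}$, and the two measure moments), while the other contains three differences; threading this through Cauchy--Schwarz without losing the $(f(N-1))^{\frac{q-2}{3q-4}}$ rate requires the Wasserstein bound to be applied in its sharpest form (Lemma~\ref{prop:general}) and the arbitrary-control state estimate (Proposition~\ref{prop:estimate:non-optimal}) to be used exactly once for each of the state differences. A secondary subtlety is the time shift $t-\delta_i$ appearing in $y_0^{{\mathbf{v}^i}[v_0]}$ and $x_0^{v_0}$, which is handled by noting that the $\mathcal{S}^2(-b,T)$-norm majorises every shifted $\mathcal{S}^2$-seminorm.
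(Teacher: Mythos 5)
Your proposal is correct and follows essentially the same route as the paper's proof: the quadratic Lipschitz condition of Assumption~\ref{assump:lip}(iii) splits the cost difference into a growth factor times a difference factor, Cauchy--Schwarz separates them, the growth factor is bounded by Lemma~\ref{prop:moment} together with \eqref{eq:z:estimate} and \eqref{eq:y/N:estimate}, and the difference factor is bounded by Proposition~\ref{prop:estimate:non-optimal} and Lemma~\ref{prop:general}, with the square root converting $(f(N-1))^{\frac{2q-4}{3q-4}}$ into the stated rate. The only cosmetic discrepancy is that after taking square roots the remainder to absorb is $N^{-1/2}\sup_\delta\|v_1^{i,\delta}\|_{\mathcal{L}^2}$ rather than $N^{-1}$, but $N^{-1/2}\le C (f(N-1))^{\frac{q-2}{3q-4}}$ still holds, so the argument goes through unchanged.
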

\begin{proof}
By Assumption \ref{assump:lip}, we have
\begin{align*}
    &\mE\bigg[ \bigg|f_1\bigg(y_1^{i, \delta_i,\mathbf{v}^i[v_0]}(t), \text{\footnotesize$\frac{1}{N-1}\sum_{j \neq i} \delta_{y_1^{j, \Delta_j,\mathbf{v}^i[v_0]}(t)}$}, v_1^{i, \delta_i}(t), y_0^{\mathbf{v}^i[v_0]}\left(t-\delta_i\right)\bigg)\\
    &\qquad-f_1\left(x_1^{i, \delta_i,v_0,v_1}(t), z[v_0](t), v_1^{i, \delta_i}(t), x_0^{v_0}\left(t-\delta_i\right)\right)\bigg|\bigg]
    \leq L\|(A)\|_2\cdot\|(B)\|_2,
\end{align*}
where
\begin{align*}
     \|(A)\|_2&:=\bigg\|1+\Big|y_1^{i, \delta_i,\mathbf{v}^i[v_0]}(t)\Big|+\Big|x_1^{i, \delta_i,v_0,v_1}(t)\Big|+\mathcal{M}_2\bigg(\text{\footnotesize$\frac{1}{N-1}\sum_{j \neq i} \delta_{y_1^{j, \Delta_j,\mathbf{v}^i[v_0]}(t)}$}\bigg)\\
    &\qquad+\mathcal{M}_2\left(z[v_0](t)\right)+\Big|y_0^{\mathbf{v}^i[v_0]}\left(t-\delta_i\right)\Big|+\Big|x_0^{v_0}\left(t-\delta_i\right)\Big|\bigg\|_2;\\
     \|(B)\|_2&:=\bigg\|\Big|y_1^{i, \delta_i,\mathbf{v}^i[v_0]}(t)-x_1^{i, \delta_i,v_0,v_1}(t)\Big|+W_2\bigg(\text{\footnotesize$\frac{1}{N-1}\sum_{j \neq i} \delta_{y_1^{j, \Delta_j,\mathbf{v}^i[v_0]}(t)}$},z[v_0](t)\bigg)\\
    &\qquad+\Big|y_0^{\mathbf{v}^i[v_0]}\left(t-\delta_i\right)-x_0^{v_0}\left(t-\delta_i\right)\Big|\bigg\|_2.
\end{align*}
Note that
\begin{align*}
    \|(A)\|_2\ & \leq 1+\Big\|y_1^{i, \delta_i,\mathbf{v}^i[v_0]}(t)\Big\|_2+\Big\|x_1^{i, \delta_i,v_0,v_1}(t)\Big\|_2+\bigg\|\mathcal{M}_2\bigg(\text{\footnotesize$\frac{1}{N-1}\sum_{j \neq i} \delta_{y_1^{j, \Delta_j,\mathbf{v}^i[v_0]}(t)}$}\bigg)\bigg\|_2\\
     &\qquad+\Big\|\mathcal{M}_2\left(z[v_0](t)\right)\Big\|_2+\Big\|y_0^{\mathbf{v}^i[v_0]}\left(t-\delta_i\right)\Big\|_2+\Big\|x_0^{v_0}\left(t-\delta_i\right)\Big\|_2\\
     \ &\leq 1+\Big\|y_1^{i, \delta_i,\mathbf{v}^i[v_0]}\Big\|_{\mathcal{S}^2}+\Big\|x_1^{i, \delta_i,v_0,v_1}\Big\|_{\mathcal{S}^2}+\sup_{\delta\in[a,b]}  \Big\|y_1^{j, \delta, {\mathbf{v}^i}[v_0]}\Big\|_{\mathcal{S}^2}\\
     &\qquad+\sup_{\delta\in[a,b]}\Big\|x_1^{i, \delta, v_0}\Big\|_{\mathcal{S}^2}+\Big\|y_0^{\mathbf{v}^i[v_0]}\Big\|_{\mathcal{S}^2(-b,T)}+\Big\|x_0^{v_0}\Big\|_{\mathcal{S}^2(-b,T)}\\
     \ &\leq C(L,T,\xi_0,\xi_1)\bigg(1+\|v_0\|_{\mathcal{L}^2}+\sup_{\delta\in[a,b]}\left\|u_1^{i,\delta}[v_0]\right\|_{\mathcal{L}^2}+\sup_{\delta\in[a,b]}\left\|v_1^{i,\delta}\right\|_{\mathcal{L}^2}\bigg),
\end{align*}
where the second inequality uses \eqref{eq:z:estimate} and \eqref{eq:y/N:estimate}, and the last inequality uses Lemma \ref{prop:moment}-(i). Moreover,
\begin{align*}
    \|(B)\|_2\ &\leq\Big\|y_1^{i, \delta_i,\mathbf{v}^i[v_0]}(t)-x_1^{i, \delta_i,v_0,v_1}(t)\Big\|_2+\bigg\|W_2\bigg(\text{\footnotesize$\frac{1}{N-1}\sum_{j \neq i} \delta_{y_1^{j, \Delta_j,\mathbf{v}^i[v_0]}(t)}$},z[v_0](t)\bigg)\bigg\|_2\\
    &\qquad+\Big\|y_0^{\mathbf{v}^i[v_0]}\left(t-\delta_i\right)-x_0^{v_0}\left(t-\delta_i\right)\Big\|_2\\
    \ &\leq  \sup_{\delta_i\in[a,b]}\left\|y_1^{i,\delta_i,{\mathbf{v}^i}[v_0]}-x^{i,\delta_i,v_0,v_1}_1\right\|_{\mathcal{S}^2}+\sup_{\delta_j\in[a,b]}\left\|y_1^{j,\delta_j,{\mathbf{v}^i}[v_0]}-x^{j,\delta_j,v_0}_1\right\|_{\mathcal{S}^2}\\
    &\qquad+\bigg\|W_2\bigg(\text{\footnotesize$\frac{1}{N-1}{\sum_{k \neq i}\delta_{x_1^{k, \Delta_k,v_0}(t)}}$},z[v_0](t)\bigg)\bigg\|_2+\left\|y_0^{{\mathbf{v}^i}[v_0]}-x_0^{v_0}\right\|^2_{\mathcal{S}^2}\\
    \ &\leq C(n_1,q,L,T,\xi_0,\xi_1)\bigg(1+\sqrt{l_{u_1[v_0]}}+\|v_0\|_{\mathcal{L}^q}+\sup_{\delta\in[a,b]}\|u_1^{i,\delta}[v_0]\|_{\mathcal{L}^q}\bigg)(f(N-1))^{\frac{q-2}{3q-4}}\\
   &\qquad+\frac{1}{\sqrt{N}} C(L,T)\sup_{\delta\in[a,b]}\left\|v_1^{i,\delta}\right\|_{\mathcal{L}^2}\\
  \ & \leq C(n_1,q,L,T,\xi_0,\xi_1)\\
  &\qquad\cdot\bigg(1+\sqrt{l_{u_1[v_0]}}+\|v_0\|_{\mathcal{L}^q}+\sup_{\delta\in[a,b]}\left\|u_1^{i,\delta}[v_0]\right\|_{\mathcal{L}^q}+\sup_{\delta\in[a,b]}\left\|v_1^{i,\delta}\right\|_{\mathcal{L}^2}\bigg)(f(N-1))^{\frac{q-2}{3q-4}}.
\end{align*}
where the second inequality is similar to \eqref{eq:proof:J:diff}, and the third inequality is due to Lemma \ref{prop:general} and  Proposition \ref{prop:estimate:non-optimal}. Thus, we get
\begin{align*}
    \|(A)\|_2\cdot\|(B)\|_2\leq& C(n_1,q,L,T,\xi_0,\xi_1)\bigg(1+\|v_0\|_{\mathcal{L}^2}+\sup_{\delta\in[a,b]}\left\|u_1^{i,\delta}[v_0]\right\|_{\mathcal{L}^2}+\sup_{\delta\in[a,b]}\left\|v_1^{i,\delta}\right\|_{\mathcal{L}^2}\bigg)\\
    &\cdot\bigg(1+\sqrt{l_{u_1[v_0]}}+\|v_0\|_{\mathcal{L}^q}+\sup_{\delta\in[a,b]}\left\|u_1^{i,\delta}[v_0]\right\|_{\mathcal{L}^q}+\sup_{\delta\in[a,b]}\left\|v_1^{i,\delta}\right\|_{\mathcal{L}^2}\bigg)(f(N-1))^{\frac{q-2}{3q-4}}\\
     \leq \ & C(n_1,q,L,T,\xi_0,\xi_1)
    \\&\cdot\bigg(1+l_{u_1[v_0]}+\|v_0\|_{\mathcal{L}^q}^2+\sup_{\delta\in[a,b]}\left\|u_1^{i,\delta}[v_0]\right\|_{\mathcal{L}^q}^2+\sup_{\delta\in[a,b]}\left\|v_1^{i,\delta}\right\|_{\mathcal{L}^2}^2\bigg)(f(N-1))^{\frac{q-2}{3q-4}}.
\end{align*}
The terminal cost can be treated in a similar manner, yielding the same upper bound, and thus the claim follows.
\end{proof}

\begin{remark}
    In the proof of Proposition \ref{prop:J:diff}, the main inequalities we used are Cauchy-Schwarz inequality and the triangle inequality.   Moreover, the convergence rate appears only in $\|(B)\|$, including two term of $\mathcal{O}\left(f(N)^{\frac{q-2}{3q-4}}\right)$ and $\mathcal{O}\left(\frac{1}{\sqrt{N}}\right)$ (the former is slower that the latter). Therefore, the convergence rate is unlikely improven.
\end{remark}

Next, we aim to show that $u_1^{i,\delta}[v_0]\in \mathcal{U}_i(\kappa,v_0), i=1,2,\cdots, N$ is an $\epsilon=\epsilon(N)$-Nash equilibrium with $\epsilon(N)\to 0$ as $N\to \infty$.
\begin{theorem}\label{thm:epsilon:nash}
    $u_1^{i,\delta}[v_0]\in \mathcal{U}_i(\kappa,v_0), i=1,2,\cdots, N$ is a $\epsilon=\epsilon(N)$-Nash equilibrium with
    \begin{align*}
        \epsilon(N)=C(n_1,q,L,T,\xi_0,\xi_1)\bigg(1+\kappa+l_{u_1[v_0]}+\kappa \|v_0\|_{\mathcal{L}^q}^2\bigg)(f(N-1))^{\frac{q-2}{3q-4}}.
    \end{align*}
\end{theorem}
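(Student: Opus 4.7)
The plan is to deploy a standard three--epsilon (``triangle'') argument, linking the $N$-player cost under the prescribed strategy profile $\mathbf{u}[v_0]$ to the cost under an arbitrary unilateral deviation $\mathbf{v}^i[v_0]$ via the limiting Stackelberg cost functional $\mathcal{J}^{i,\delta_i}(\cdot;v_0,z[v_0])$. All of the analytical work has been done: Proposition~\ref{prop:J:diff} already quantifies the $N$-player vs.\ mean field cost gap in a form that is uniform over choices of $v_1^{i,\delta_i}$ belonging to a bounded set, and the optimality of $u_1^{i,\delta_i}[v_0]$ in the limiting problem is built into the definition of the solution of Step~1 of Problem~\ref{problem:Limit}. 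No new estimate is required, so the argument is essentially just chaining and bookkeeping.

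Concretely, I fix $1\le i\le N$, a delay $\delta_i\in[a,b]$, and an arbitrary feasible deviation $v_1^{i,\delta_i}\in\mathcal{U}_i(\kappa,v_0)$. Since $\kappa>\widehat{C}$, Hypothesis~\ref{hyp:kappa} implies that the optimal control $u_1^{i,\delta_i}[v_0]$ itself belongs to $\mathcal{U}_i(\kappa,v_0)$, so Proposition~\ref{prop:J:diff} applies with identical structural constants to both the choice $v_1^{i,\delta_i}=u_1^{i,\delta_i}[v_0]$ and the chosen deviation. This yields
\begin{align*}
\bigl|\mathcal{J}^{i,\delta_i,N}(\mathbf{u}[v_0])-\mathcal{J}^{i,\delta_i}(u_1^{i,\delta_i}[v_0];v_0,z[v_0])\bigr|&\le \tfrac{1}{2}\epsilon(N),\\
\bigl|\mathcal{J}^{i,\delta_i,N}(\mathbf{v}^i[v_0])-\mathcal{J}^{i,\delta_i}(v_1^{i,\delta_i};v_0,z[v_0])\bigr|&\le \tfrac{1}{2}\epsilon(N).
\end{align*}
Combining these with the defining optimality
$\mathcal{J}^{i,\delta_i}(u_1^{i,\delta_i}[v_0];v_0,z[v_0])\le \mathcal{J}^{i,\delta_i}(v_1^{i,\delta_i};v_0,z[v_0])$
from Step~1(i) of Problem~\ref{problem:Limit}, and taking the infimum of the right-hand side over $v_1^{i,\delta_i}\in\mathcal{U}_i(\kappa,v_0)$, produces
\[
\mathcal{J}^{i,\delta_i,N}(\mathbf{u}[v_0])\le \inf_{v_1^{i,\delta_i}\in\mathcal{U}_i(\kappa,v_0)}\mathcal{J}^{i,\delta_i,N}(\mathbf{v}^i[v_0])+\epsilon(N),
\]
which is precisely the $\epsilon(N)$-Nash condition of Definition~\ref{def:equili}(1).

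The final task is to read off the asserted explicit form of $\epsilon(N)$ from the constant appearing in Proposition~\ref{prop:J:diff}. The bound there involves $\|v_0\|_{\mathcal{L}^q}^2$, $l_{u_1[v_0]}$, $\sup_\delta\|u_1^{i,\delta}[v_0]\|_{\mathcal{L}^q}^2$, and $\sup_\delta\|v_1^{i,\delta}\|_{\mathcal{L}^2}^2$. The first three are treated as given constants attached to $v_0$ (with $\sup_\delta\|u_1^{i,\delta}[v_0]\|_{\mathcal{L}^q}^2$ absorbed into $C(\cdot,\|v_0\|_{\mathcal{L}^q})$ through Hypothesis~\ref{assumption:integral}-(i) together with Hypothesis~\ref{hyp:kappa} upgraded to the $\mathcal{L}^q$ setting), while the last is controlled by the admissibility constraint $\sup_\delta\|v_1^{i,\delta}\|_{\mathcal{L}^2}^2\le\kappa(1+\|v_0\|_{\mathcal{L}^2}^2)\le C\kappa(1+\|v_0\|_{\mathcal{L}^q}^2)$ via H\"older on $[0,T]$. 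Collecting the contributions and absorbing all numerical constants into a single $C(n_1,q,L,T,\xi_0,\xi_1)$ gives exactly the claimed
\[
\epsilon(N)=C(n_1,q,L,T,\xi_0,\xi_1)\bigl(1+\kappa+l_{u_1[v_0]}+\kappa\|v_0\|_{\mathcal{L}^q}^2\bigr)(f(N-1))^{\frac{q-2}{3q-4}}.
\]
The only subtle point in the argument is the implicit use of the uniform $\mathcal{L}^q$-bound on the optimal control through Hypothesis~\ref{hyp:kappa}; beyond this, the proof is purely algebraic and does not require any additional estimate on the SDEs or the Wasserstein metric.
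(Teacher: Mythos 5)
Your proposal is correct and follows essentially the same route as the paper: a chain of inequalities applying Proposition~\ref{prop:J:diff} twice (once with $v_1^{i,\delta_i}=u_1^{i,\delta_i}[v_0]$ and once with the arbitrary deviation), inserting the optimality of $u_1^{i,\delta_i}[v_0]$ for the limiting problem in between, and using the definition of $\mathcal{U}_i(\kappa,v_0)$ to absorb $\sup_{\delta}\|v_1^{i,\delta}\|_{\mathcal{L}^2}^2$ into $\kappa(1+\|v_0\|_{\mathcal{L}^2}^2)$. The subtlety you flag about needing an $\mathcal{L}^q$-bound on the optimal control to reach the stated form of the constant is present in the paper's own proof as well, so your argument matches it in both substance and level of rigor.
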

\begin{proof}
    For any $i=1,2,\cdots, N$ and $v^{i,\delta_i}_1\in \mathcal{U}_i(\kappa,v_0)$, from Proposition \ref{prop:J:diff}, we know that 
    \begin{align}
          &\mathcal{J}^{i, \delta_i, N}(\mathbf{u}[v_0]) \notag \\
          \leq\ &  \mathcal{J}^{i, \delta_i}(u_1^{i,\delta_i}; v_0,z[v_0]) \notag \\
          &
   +  C(n_1,q,L,T,\xi_0,\xi_1)\bigg(1+l_{u_1[v_0]}+\|v_0\|_{\mathcal{L}^q}^2+\sup_{\delta\in[a,b]}\left\|u_1^{i,\delta}[v_0]\right\|_{\mathcal{L}^q}^2\bigg)(f(N-1))^{\frac{q-2}{3q-4}} \notag \\
   \leq \ & \mathcal{J}^{i, \delta_i}(v_1^{i,\delta_i}; v_0,z[v_0])  \notag \\
          &
   +  C(n_1,q,L,T,\xi_0,\xi_1)\bigg(1+l_{u_1[v_0]}+\|v_0\|_{\mathcal{L}^q}^2+\sup_{\delta\in[a,b]}\left\|u_1^{i,\delta}[v_0]\right\|_{\mathcal{L}^q}^2\bigg)(f(N-1))^{\frac{q-2}{3q-4}} \notag \\
   \leq \ &\mathcal{J}^{i, \delta_i, N}({\mathbf{v}^i}[v_0]) \notag \\
     &+  C(n_1,q,L,T,\xi_0,\xi_1)\bigg(1+l_{u_1[v_0]}+\|v_0\|_{\mathcal{L}^q}^2+\sup_{\delta\in[a,b]}\left\|u_1^{i,\delta}[v_0]\right\|_{\mathcal{L}^q}^2+\sup_{\delta\in[a,b]}\left\|v_1^{i,\delta}\right\|_{\mathcal{L}^2}^2\bigg)(f(N-1))^{\frac{q-2}{3q-4}} \notag 
     \\
     \leq \ &\mathcal{J}^{i, \delta_i, N}({\mathbf{v}^i}[v_0])+  C(n_1,q,L,T,\xi_0,\xi_1)\bigg(1+\kappa+l_{u_1[v_0]}+\kappa \|v_0\|_{\mathcal{L}^q}^2\bigg)(f(N-1))^{\frac{q-2}{3q-4}}. \label{add-7}
    \end{align}
    where the second inequality uses the optimality of $u_1^{i,\delta_i}[v_0]$, and the last inequality uses the definition of $\mathcal{U}_i(\kappa,v_0)$. Therefore, the claim follows.
\end{proof}

Furthermore, when $\sigma_0$ is independent of $v_0$, a slightly modified analysis of Lemma \ref{prop:general}, Proposition \ref {prop:estimate:non-optimal}, Proposition \ref{prop:J:diff} and Theorem \ref{thm:epsilon:nash}, based on Remark \ref{remark:sigma_0}, yields an improven rate of convergence:
       \begin{align*}      \epsilon(N)=C(n_1,q,L,T,\xi_0,\xi_1)\bigg(1+\kappa+l_{u_1[v_0]}+\kappa\|v_0\|_{\mathcal{L}^q}^2 \bigg)(f(N-1))^{\frac13}.
    \end{align*}
Similarly, by Remark \ref{remark:delta}, suppose that $\Delta \in \{a_k : k=0,1,\ldots,n\}$ with $a=a_0 < a_1 < \cdots < a_n = b$, $\mP(\Delta=a_k)=p_k$, and $\sum_{k=0}^n p_k = 1$. Then the convergence rate can be further improven:
       \begin{align*}      \epsilon(N)=C(n_1,q,L,T,\xi_0,\xi_1)\bigg(1+\kappa+l_{u_1[v_0]}+\kappa\|v_0\|_{\mathcal{L}^q}^2 \bigg)(f(N-1))^{\frac12}\bigg(\sum_{k=0}^{n}\sqrt{p_k}\bigg)^{\frac12}.
\end{align*} 
Finally, for the particular setting with Assumption~\ref{assumption:exp} in Subsection~\ref{sec:example}, we have the  standard  $\mathcal{O}\left(\frac{1}{\sqrt{N}}\right)$ convergence rate as:
\begin{align*}
  \epsilon(N)=  C(n_1,L,T,\xi_0,\xi_1)\bigg(1+\kappa+\kappa\|v_0\|_{\mathcal{L}^2}^2\bigg)\frac{1}{\sqrt{N}}.
\end{align*}
In the next final subsection, a similar discussion applies to the approximate Stackelberg Nash equilibrium, we omit the details.

\subsection{Approximate Stackelberg Nash equilibrium}
\label{sec:appro:2}

We start by giving the definition of admissible control set (similar to Definition \ref{def:U_i} for the followers) for the dominating player as follows:
\begin{definition}[$\gamma$-admissible control set for leader]\label{def:U_0}
For any $\gamma\geq\|u_0\|_{\mathcal{L}^q}^2$, we define the $\gamma$-admissible set $\mathcal{U}_0(\gamma)$ for  Player $0$ by:
    \begin{align*}
        \mathcal{U}_0(\gamma):= \{v_0\in \mathcal{L}_{\mathcal{F}^0}^q\left([0, T] ; \mathbb{R}^{p_0}\right):\|v_0\|_{\mathcal{L}^q}^2\leq \gamma\}.
    \end{align*}
\end{definition}
\noindent
The goal of this final subsection is to establish that the optimal admissible control pair, 
\begin{align}\label{set:kappa}
    \left\{u_0\in \mathcal{U}_0(\gamma),\ u_1^{i,\delta_i}[u_0]\in  \mathcal{U}_0(u_0,\kappa),\ 1\le j\le N \right\},
\end{align}
constitutes an $(\epsilon_1,\epsilon_2)$-Stackelberg Nash equilibrium for Problem~\ref{eq:aim}. To ensure the existence of a $(\epsilon_1,\epsilon_2)$-Stackelberg Nash equilibrium, we impose the following technical condition on the followers’ strategies associated with each admissible leader’s control (which will be proven in a separate paper \cite{?}).
\begin{hypothesis}\label{assump:u_0}
    For any feasible $v_0\in \mathcal{L}_{\mathcal{F}^0}^q\left([0, T] ; \mathbb{R}^{p_0}\right)$, it holds that
        \begin{align*}
           l_{u_1[v_0]}+\sup_{\delta\in[a,b]}\left\|u_1^{i,\delta}[v_0]\right\|_{\mathcal{L}^q}^2 \leq \widehat{C} \left(1+\|v_0\|_{\mathcal{L}^q}^2 \right),
        \end{align*}
        where $\widehat{C}$ is an universal constant only depends on $n_1,q,L,T,\xi_0,\xi_1$.
\end{hypothesis}

Under this hypothesis, we can now establish the main result of this subsection, which quantifies the approximation error of the Stackelberg-Nash equilibrium for Problem~\ref{eq:aim}.

\begin{theorem}
Suppose that Assumption \ref{assump:u_0} holds, then the optimal admissible control pair \eqref{set:kappa} with $\kappa:=\widehat{C}$ constitutes an $(\epsilon_1,\epsilon_2)$-Stackelberg Nash equilibrium with
  \begin{align}\label{result}
      \epsilon_1(N),\ \epsilon_2(N)\ =C(n_1,q,L,T,\xi_0,\xi_1)\left(1+\gamma\right)(f(N-1))^{\frac{q-2}{3q-4}},
  \end{align}
  and $\epsilon_1$ and $\epsilon_2$ may correspond to different constants $C$ but with the same $\mathcal{O}(f(N-1))^{\frac{q-2}{3q-4}}$ rate. 
\end{theorem}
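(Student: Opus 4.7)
I prove the two bounds in \eqref{result} separately. For $\epsilon_1(N)$, I apply Theorem~\ref{thm:epsilon:nash} with $v_0 := u_0$ and $\kappa := \widehat{C}$. Since $u_0 \in \mathcal{U}_0(\gamma)$ one has $\|u_0\|^2_{\mathcal{L}^q} \leq \gamma$, and Hypothesis~\ref{assump:u_0} yields $l_{u_1[u_0]} + \sup_{\delta\in[a,b]}\|u_1^{i,\delta}[u_0]\|^2_{\mathcal{L}^q} \leq \widehat{C}(1+\gamma)$. Moreover, Hypothesis~\ref{hyp:kappa} ensures $u_1^{i,\delta}[u_0] \in \mathcal{U}_i(\widehat{C},u_0)$, so the followers' controls are admissible. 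Substituting these bounds into the explicit expression for $\epsilon(N)$ in Theorem~\ref{thm:epsilon:nash} immediately produces $\epsilon_1(N) \leq C(n_1,q,L,T,\xi_0,\xi_1)(1+\gamma)(f(N-1))^{(q-2)/(3q-4)}$, as required.

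\textbf{A leader-side comparison estimate.} The bound on $\epsilon_2(N)$ requires the following analogue of Proposition~\ref{prop:J:diff}: for every admissible $v_0 \in \mathcal{U}_0(\gamma)$,
\begin{equation*}
\bigl|\mathcal{J}^{0,N}(\mathbf{u}[v_0]) - \mathcal{J}^0(v_0; z[v_0])\bigr| \leq C(n_1,q,L,T,\xi_0,\xi_1)\,(1+\gamma)\bigl(f(N-1)\bigr)^{\frac{q-2}{3q-4}} \;=:\; R(N).
\end{equation*}
The derivation mirrors the Cauchy--Schwarz template of Proposition~\ref{prop:J:diff}: Assumption~\ref{assump:lip}(iii) bounds the $t$-integrand differences for $f_0$ and $h_0$ by a product of two factors, whose $L^2$ norms I denote $\|(A)\|_2\cdot\|(B)\|_2$. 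The moment factor $\|(A)\|_2$ is controlled via Lemma~\ref{prop:moment} together with Hypothesis~\ref{assump:u_0}, giving $\|(A)\|_2 \leq C(1+\sqrt{\gamma})$ uniformly in $t$. The error factor $\|(B)\|_2$ combines $\|y_0^{\mathbf{u}[v_0]}-x_0^{v_0}\|_{\mathcal{S}^2}$, handled directly by Theorem~\ref{thm:converge:Ntolimit}, and the $L^2$-norm of $W_2\bigl(\tfrac{1}{N}\sum_{j=1}^N \delta_{y_1^{j,\Delta_j,\mathbf{u}[v_0]}(t)},\, z[v_0](t)\bigr)$, both of which scale like $\bigl(f(N-1)\bigr)^{(q-2)/(3q-4)}$ after applying Hypothesis~\ref{assump:u_0} to absorb the follower-side constants into $1+\gamma$. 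The terminal term $h_0$ is handled identically.

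\textbf{Sandwich argument for $\epsilon_2(N)$ and main obstacle.} Once $R(N)$ is established, for any $v_0 \in \mathcal{U}_0(\gamma)$ I chain three inequalities:
\begin{equation*}
\mathcal{J}^{0,N}(\mathbf{u}[u_0]) \;\leq\; \mathcal{J}^0(u_0;z[u_0]) + R(N) \;\leq\; \mathcal{J}^0(v_0;z[v_0]) + R(N) \;\leq\; \mathcal{J}^{0,N}(\mathbf{u}[v_0]) + 2R(N),
\end{equation*}
where the outer inequalities are the $R(N)$-estimate applied with $u_0$ and with $v_0$, respectively, and the middle inequality is the defining optimality of $u_0$ in Step~2 of Problem~\ref{problem:Limit}. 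Taking the infimum over $v_0 \in \mathcal{U}_0(\gamma)$ yields $\epsilon_2(N) = 2R(N)$, matching the second line of \eqref{result} after absorbing the factor $2$ into $C$. The principal technical obstacle lies in deriving $R(N)$: the leader's cost involves the full empirical distribution $\tfrac{1}{N}\sum_{j=1}^N$, whereas Theorem~\ref{thm:converge:Ntolimit} and Lemma~\ref{prop:general} are stated for the leave-one-out distribution $\tfrac{1}{N-1}\sum_{j\neq i}$. Bridging the two requires an extra triangle-inequality step routed through $\tfrac{1}{N-1}\sum_{j\neq i}\delta_{x_1^{j,\Delta_j,v_0}(t)}$ using the coupling bounds \eqref{eq:w2:conv1}--\eqref{eq:w2:con2} from the proof of Lemma~\ref{prop:rate:optimal}; it is precisely this bridging step that transmits Hypothesis~\ref{assump:u_0} into the constant factor $(1+\gamma)$ while preserving the sharp exponent $(q-2)/(3q-4)$.
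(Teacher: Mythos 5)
Your proposal is correct and follows essentially the same route as the paper: the $\epsilon_1$ bound is obtained by feeding Hypothesis~\ref{assump:u_0} and $\|u_0\|_{\mathcal{L}^q}^2\le\gamma$ into Theorem~\ref{thm:epsilon:nash} with $\kappa=\widehat{C}$, and the $\epsilon_2$ bound comes from a leader-side analogue of Proposition~\ref{prop:J:diff} combined with the same three-step sandwich through the optimality of $u_0$ in Step~2 of Problem~\ref{problem:Limit}. Your remark about bridging the full empirical measure $\frac1N\sum_{j=1}^N$ to the leave-one-out measure via \eqref{eq:w2:conv1}--\eqref{eq:w2:con2} is exactly how the paper's cited ingredients (Lemma~\ref{prop:rate:optimal} and Theorem~\ref{thm:converge:Ntolimit}) already handle that term.
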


\begin{proof}
By using Assumption \ref{assump:lip} and Lemma \ref{prop:moment} and Theorem \ref{thm:converge:Ntolimit}, following a similar approach as the proof of Proposition \ref{prop:J:diff}, we can prove that for all $v_0\in \mathcal{L}_{\mathcal{F}^0}^q\left([0, T] ; \mathbb{R}^{p_0}\right)$, 
    \begin{align*}
         &\left| \mathcal{J}^{0, N}(\mathbf{u}[v_0])-\mathcal{J}^{0}\left(v_0,z[v_0]\right)\right|\\\leq \ &C(n_1,q,L,T,\xi_0,\xi_1)\bigg(1+l_{u_1[v_0]}+\|v_0\|_{\mathcal{L}^q}^2+\sup_{\delta\in[a,b]}\left\|u_1^{i,\delta}[v_0]\right\|_{\mathcal{L}^q}^2\bigg)(f(N-1))^{\frac{q-2}{3q-4}}.
    \end{align*}
    Therefore, for any $v_0\in\mathcal{U}_0$, parallel to the approach used in estimating \eqref{add-7} in the proof of Theorem~\ref{thm:epsilon:nash}, we can deduce that
    \begin{align*}
          & \mathcal{J}^{0, N}(\mathbf{u}[u_0])\\
          \leq \ &\mathcal{J}^{0}\left(u_0,z[u_0]\right)+C(n_1,q,L,T,\xi_0,\xi_1)\bigg(1+l_{u_1[u_0]}+\left\|u_0\right\|_{\mathcal{L}^q}^2+\sup_{\delta\in[a,b]}\left\|u_1^{i,\delta}[u_0]\right\|_{\mathcal{L}^q}^2\bigg)(f(N-1))^{\frac{q-2}{3q-4}}\\
          \leq \ & \mathcal{J}^{0}\left(v_0,z[v_0]\right)+C(n_1,q,L,T,\xi_0,\xi_1)\bigg(1+l_{u_1[u_0]}+\left\|u_0\right\|_{\mathcal{L}^q}^2+\sup_{\delta\in[a,b]}\left\|u_1^{i,\delta}[u_0]\right\|_{\mathcal{L}^q}^2\bigg)(f(N-1))^{\frac{q-2}{3q-4}}\\
          \leq \ & \mathcal{J}^{0, N}(\mathbf{u}[v_0])+C(n_1,q,L,T,\xi_0,\xi_1)\bigg(1+l_{u_1[u_0]}+\|u_0\|_{\mathcal{L}^q}^2+\sup_{\delta\in[a,b]}\left\|u_1^{i,\delta}[u_0]\right\|_{\mathcal{L}^q}^2\bigg)(f(N-1))^{\frac{q-2}{3q-4}}\\
          &\qquad\qquad\quad + C(n_1,q,L,T,\xi_0,\xi_1)\bigg(1+l_{u_1[v_0]}+\|v_0\|_{\mathcal{L}^q}^2+\sup_{\delta\in[a,b]}\left\|u_1^{i,\delta}[v_0]\right\|_{\mathcal{L}^q}^2\bigg)(f(N-1))^{\frac{q-2}{3q-4}}\\
          \leq \ & \mathcal{J}^{0, N}(\mathbf{u}[v_0])+C(n_1,q,L,T,\xi_0,\xi_1)\left(1+\|u_0\|_{\mathcal{L}^q}^2+\|v_0\|_{\mathcal{L}^q}^2\right)(f(N-1))^{\frac{q-2}{3q-4}}\\
          \leq \ & \mathcal{J}^{0, N}(\mathbf{u}[v_0])+C(n_1,q,L,T,\xi_0,\xi_1)\left(1+\gamma\right)(f(N-1))^{\frac{q-2}{3q-4}},
    \end{align*}
    from which we obtain $\epsilon_2(N)= C(n_1,q,L,T,\xi_0,\xi_1)\left(1+\gamma\right)(f(N-1))^{\frac{q-2}{3q-4}}$. Then, from Hypothesis \ref{assump:u_0}, by using Theorem~\ref{thm:epsilon:nash} with $\kappa=\widehat{C}$, taking  $\epsilon_1(N)= C(n_1,q,L,T,\xi_0,\xi_1)\left(1+\gamma\right)(f(N-1))^{\frac{q-2}{3q-4}}$ to fulfill \eqref{result}. 
\end{proof}

\section{Conclusion and Future Works}\label{sec:conclution}

In this article, we establish precise convergence rates for a general class of $N$-player Stackelberg games toward their mean field limits. Our framework accommodates time-delayed information, interactions through empirical distributions, and control-dependent diffusion coefficients.
Throughout the paper, we assume that the leader’s Brownian motion does not act as a common noise for the followers, and we focus on the convergence rate by imposing a solvability hypothesis for the limiting mean field Stackelberg game. 

Nevertheless, under the probabilistic approach, incorporating the leader’s Brownian motion as a common noise for the followers does not introduce additional difficulties, since the distributional flow of the followers is already conditional with respect to the filtration associated with the leader.
The solvability hypothesis adopted here will be rigorously justified in a separate work \cite{?}, where we provide a detailed analysis of the well-posedness of the stochastic system consisting of a forward SDE and a backward dynamic equation (see \cite{MR2857245}) arising from the maximum principle for mean field Stackelberg games with time delay. In that study as well, the presence of the leader’s Brownian motion as a common noise for the followers can be handled without increasing the complexity of the problem under the probabilistic approach.
Finally, to apply Lemma \ref{lemma:wasserstien}, we impose the condition $q>4$. Extending our results to the case $q=2$ remains an open problem.

\section*{Acknowledgement}
Alain Bensoussan is supported by the National Science Foundation under grant NSF-DMS-2204795. Ziyu Huang acknowledges the financial supports as a postdoctoral fellow from Department of Statistics and Data Science of The Chinese University of Hong Kong. Sheng Wang acknowledges Professor Ka Chun Cheung and the financial supports as a postdoctoral fellow from Department of Statistics and Actuarial Science, School of Computing and Data Science, The University of
Hong Kong.
Phillip Yam acknowledges the financial supports from HKGRF-14301321 with the project title ``General Theory for Infinite Dimensional Stochastic Control: Mean field and Some Classical Problems'', and HKGRF-14300123 with the project title ``Well-posedness of Some Poisson-driven Mean Field Learning Models and their Applications''. The work described in this article was also supported by a grant from the Germany/Hong Kong Joint Research Scheme sponsored by the Research Grants Council of Hong Kong and the German Academic Exchange Service of Germany (Reference No. G-CUHK411/23). He also thanks The University of Texas at Dallas for the kind invitation to be a Visiting Professor in Naveen Jindal School of Management.

\appendix
\appendixpage	
\section{Proof of Lemmas in Section~\ref{sec:formulation}}\label{app:sec:2}

\subsection{Proof of Lemma~\ref{lma:convex:wa}}\label{pf:lma:convex:wa}

We only prove \eqref{lma:convex:wa:1}, as \eqref{lma:convex:wa:2} and \eqref{lma:convex:wa:3} are special cases of \eqref{lma:convex:wa:1}. Since  the infimum of \eqref{eq:def:w}	is  always attainable (see \cite{AB,Villani} or \cite[Section 5.1]{carmona2018probabilistic} for instance), for any $s\in A$, there is a $\Gamma_s\in \mathcal{P}_2\left(\mR^{n_1}\times \mR^{n_1}\right)$ with marginal distribution  $\mu_s$ and $\nu_s$  such that
$$W^2_2(\mu_s,\nu_s)=\int_{\mR^{n_1}\times\mR^{n_1}}|x-y|^2\md \Gamma_{s}(x,y),\quad s\in A.$$
	Note that the joint distribution $\int_{s\in A}\Gamma_s\md \pi(s)$ has the marginal distribution $\int_A\mu_s\md \pi(s)\in \mathcal{P}_2(\mR^{n_1})$ and $\int_A\nu_s\md \pi(s)\in \mathcal{P}_2(\mR^{n_1})$. 
    By the definition of Wasserstein metric, we have    
\begin{align*}
    W_2^2 \left(\int_A\mu_s\md \pi(s), \int_A\nu_s\md \pi(s) \right)\le\ & \int_{\mR^{n_1}\times\mR^{n_1}} |x-y|^2 \md \left(\int_{s\in A}\Gamma_s\md \pi(s)\right)(x,y)  \\
    =\ & \int_A \int_{\mR^{n_1}\times\mR^{n_1}} |x-y|^2 \md \Gamma_s (x,y) \md \pi(s) \\
    =\ & \int_AW_2^2(\mu_s,\nu_s)\md \pi(s).
\end{align*}

\subsection{Proof of Lemma~\ref{lma:pq:mu}}\label{pf:lma:pq:mu}

Similar to Lemma \ref{lma:convex:wa}, we have
	\begin{align*}
		W^2_2\left(\sum_{k=1}^np_k\mu_k,\sum_{k=1}^nq_k\mu_k\right)\leq \inf_{\pi\in \Pi}\sum_{h=1}^n\sum_{l=1}^n\pi_{hl}W^2_2(\mu_h,\mu_l),
	\end{align*}
	where $\Pi:= \Big\{\pi_{hl}\geq0 \Big| \sum_{l=1}^n \pi_{hl}=p_h,\sum_{h=1}^n \pi_{hl}=q_l\Big\}$. Note that
	\begin{align*}
		\sum_{h=1}^n (p_h-\min\{p_h,q_h\})=\sum_{h\in A^{\complement}}(p_h-q_h), \qquad 	\sum_{h=1}^n (q_h-\min\{p_h,q_h\})=\sum_{h\in A}(q_h-p_h).
	\end{align*}
	Thus, we obtain
	\begin{align*}
		&\sum_{h=1}^n (p_h-\min\{p_h,q_h\})+\sum_{h=1}^n (q_h-\min\{p_h,q_h\})=\sum_{h=1}^n|p_h-q_h|,\\
		&\sum_{h=1}^n(p_h-\min\{p_h,q_h\})-\sum_{h=1}^n(q_h-\min\{p_h,q_h\})=0.
	\end{align*}
	Therefore
	\begin{align}\label{eq:A=B}
		\sum_{h\in A^{\complement}}(p_h-q_h)=\sum_{h\in A}(q_h-p_h).
	\end{align}
Next we verify that $\sum_{l=1}^n\hat{\pi}_{hl}=p_h,\sum_{h=1}^n\hat{\pi}_{hl}=q_l$, from which the conclusion follows.
\begin{enumerate}
	\item 	For $h\in A$,  by definition of $\hat{\pi}$, we know that $\hat{\pi}_{hl}=0$ for $l\neq h$, thus we have $\sum_{l=1}^n\hat{\pi}_{hl}=\hat{\pi}_{hh}=p_h$.
	\item 	For $h\in A^{\complement}$, we can deduce that 
	\begin{align*}
		\sum_{l=1}^n\hat{\pi}_{hl}&=q_h+\sum_{l\neq h}\hat{\pi}_{hl}=q_h+\sum_{l\neq h,l\in A}\hat{\pi}_{hl}+\sum_{l\neq h,l\in A^{\complement}}\hat{\pi}_{hl}\\
		&=q_h+\sum_{l\neq h,l\in A}\hat{\pi}_{hl}=q_h+(p_h-q_h)\frac{\sum_{l\in A}(q_l-p_l)}{\sum_{k\in A^{\complement}}(p_k-q_k)}=p_h.
	\end{align*}
	\item  	For $l\in A^{\complement}$, by definition of $\hat{\pi}$, we know that $\hat{\pi}_{hl}=0$ for $h\neq l$, so we have
	$\sum_{h=1}^n\hat{\pi}_{hl}=\hat{\pi}_{ll}=q_l$.	
    \item For $l\in A$, 
    we know that
	\begin{align*}
		\sum_{h=1}^n\hat{\pi}_{hl}&=p_l+\sum_{h\neq l}\hat{\pi}_{hl}=p_l+\sum_{h\neq l,h\in A}\hat{\pi}_{hl}+\sum_{h\neq l,h\in A^{\complement}}\hat{\pi}_{hl}\\
		&=p_l+\sum_{h\neq l,h\in A^{\complement}}\hat{\pi}_{hl}=p_l+(q_l-p_l)\frac{\sum_{h\in A^{\complement}}(p_h-q_h)}{\sum_{k\in A^{\complement}}(p_k-q_k)}=q_l.
	\end{align*}
\end{enumerate}
Finally, \eqref{eq:p_ii} is a direct consequence of \eqref{eq:A=B}.

{\subsection{Proof of Lemma \ref{lma:property:probability}}\label{pf:lma:property:probability}

The proof of \eqref{eq:property:X} is similar to that in \cite{aksamit2017enlargement,bremaud1978changes,carmona2018probabilistic}, so here we only sketch out the argument. 
First, by independence,  \eqref{eq:property:X} holds for any $X=\ind_{C_1\cap G}$ with $C_1\in\mathscr{C_1}$ and $G\in \mathscr{G}$. Since $\Pi:=\{C_1\cap G:C_1\in\mathscr{C_1}, G\in \mathscr{G}\}$ is a $\pi$-system, the monotone class theorem implies that \eqref{eq:property:X} holds for any integrable $X$. Next, we prove \eqref{eq:property:regular}. For any $t=(t_1,t_2,\cdot,t_{n_1})^\top\in\mR^{n_1}$ and $i=1,2$, define
\begin{align*}
    F_{\mathscr{C}_i}(t)=\mP^{\mathscr{C}_i}_X((-\infty,t_1]\times(-\infty,t_2]\times\cdots\times(-\infty,t_{n_1}])=\mE\left[\large{\text{$\ind$}}_{(-\infty,t_1]\times(-\infty,t_2]\times\cdots\times(-\infty,t_{n_1}]}(X)|\mathscr{C}_i\right].
\end{align*}
By \eqref{eq:property:X}, we know that for any $t$, we have $F_{\mathscr{C}_1}(t)=F_{\mathscr{C}_2}(t),\  \mP-a.s.$. Hence,
\begin{align*}
    \mP\Big( F_{\mathscr{C}_1}(t)=F_{\mathscr{C}_2}(t),\ \forall t\in \mathbb{Q}^{n_1}\Big)=1,
\end{align*}
where $\mathbb{Q}^{n_1}=\{t:t=(t_1,t_2,\dots,t_{n_1})^\top\in\mathbb{Q}^{n_1}\}$ and $\mathbb{Q}$ denotes the rational numbers of $\mR$. Since $ F_{\mathscr{C}_1}$ and $ F_{\mathscr{C}_2}$ are right-continuous, we have
\begin{align*}
    \mP\Big( F_{\mathscr{C}_1}(t)=F_{\mathscr{C}_2}(t),\forall t\in \mathbb{R}^{n_1}\Big)=1,
\end{align*}
which yields \eqref{eq:property:regular}.
}

\section{Proof of lemmas in Section~\ref{sec:assumption&SDE}}\label{app:sec:3}

\subsection{Proof of Lemma \ref{prop:moment} }\label{prop:proof:moment}
We first prove \eqref{C.1.-1'}-\eqref{C.1.hat.x_1}.
    From Assumption~\ref{assump:lip} and standard estimate for SDEs (see \cite{MR3793166,AB_book,MR3674558} for instance) on SDEs of $x_0^{v_0}$ and $x_1^{i,\delta_i,v_0,v_1}$, we know that 
\begin{align}
\left\|x_0^{v_0}\right\|_{\mathcal{S}^2(-b,t)}^2\leq &C(L,T)\left[1+\left\|\xi_0\right\|^2_{\mathcal{S}^2(-b,0)}+\left\|\mathcal{M}_2(z[v_0]\right\|^2_{\mathcal{L}^2(0,t)}+\left\|v_0\right\|^2_{\mathcal{L}^2(0,t)} \right],
    \label{C.1._proof_01}\\  \left\|x_1^{i,\delta_i,v_0,v_1}\right\|_{\mathcal{S}^2(0,t)}^2\leq &C(L,T)\left[1+\left\|\xi_1\right\|_2^2+\left\|\mathcal{M}_2(z[v_0]\right\|^2_{\mathcal{L}^2(0,t)}+\left\|v_1^{i,\delta_i}\right\|^2_{\mathcal{L}^2(0,t)} +\left\|x_0^{v_0}\right\|^2_{\mathcal{L}^2(-b,t)}\right] . 
    \label{C.1._proof_02}
\end{align}
From the definition of $z[v_0]$, we see that 
\begin{align}
    \left\|\mathcal{M}_2(z[v_0](s))\right\|_2^2 = \e\left[ \int_{\RR^{n_1}}|y|^2 z[v_0](s)(\md y)\right] =\ & \e\left[\int_{\RR^{n_1}}|y|^2 \int_{[a,b]} \text{\footnotesize$\mathbb{P}^{\F_{s-\delta}^0 \vee \F_{s}^z}_{x_1^{i,\delta,v_0}(s)}$} \md\pi_{\Delta}(\delta)(\md y)\right]\nonumber \\
    =\ &\e\left[ \int_{[a,b]} \int_{\RR^{n_1}}|y|^2  \text{\footnotesize$\mathbb{P}^{\F_{s-\delta}^0 \vee \F_{s}^z}_{x_1^{i,\delta,v_0}(s)} $}(\md y) \md\pi_{\Delta}(\delta) \right] \nonumber\\
    =\ & \int_{[a,b]} \e\left[\int_{\RR^{n_1}}  |y|^2  \text{\footnotesize$\mathbb{P}^{\F_{s-\delta}^0 \vee \F_{s}^z}_{x_1^{i,\delta,v_0}(s)}$} (\md y) \right]\md\pi_{\Delta}(\delta) \nonumber \\
    \le\ & \sup_{\delta\in[a,b]} \left\|x_1^{i,\delta,v_0}(s)\right\|^2_2\leq  \sup_{\delta\in[a,b]} \left\|x_1^{i,\delta,v_0}\right\|_{\mathcal{S}^2(0,s)}^2.\label{eq:mz}
\end{align}
Substituting the last estimate into \eqref{C.1._proof_01} and \eqref{C.1._proof_02}, we have 
\begin{align}
\left\|x_0^{v_0}\right\|_{\mathcal{S}^2(-b,t)}^2\leq &C(L,T)\left[1+\left\|\xi_0\right\|^2_{\mathcal{S}^2(-b,0)}+\|v_0\|_{\mathcal{L}^2(0,t)}^2+\int_0^t\sup_{\delta\in[a,b]} \left\|x_1^{i,\delta,v_0}\right\|_{\mathcal{S}^2(0,s)}^2\md s \right],
    \label{C.1._proof_03}\\
    \sup_{\delta\in[a,b]}\left\|x_1^{i,\delta,v_0,v_1}\right\|_{\mathcal{S}^2(0,t)}^2\leq &C(L,T)\bigg[1+\left\|\xi_1\right\|_2^2+\sup_{\delta\in[a,b]}\left\|v_1^{i,\delta}\right\|_{\mathcal{L}^2(0,t)}^2+\|x_0^{v_0}\|_{\mathcal{L}^2(-b,t)}^2\nonumber\\
    &\qquad\qquad \qquad\qquad\qquad\qquad\qquad+ \int_0^t\sup_{\delta\in[a,b]} \left\|x_1^{i,\delta,v_0}\right\|_{\mathcal{S}^2(0,s)}^2\md s\bigg] .
    \label{C.1._proof_04}
\end{align}
Substituting \eqref{C.1._proof_03} into \eqref{C.1._proof_04}, we know that
\begin{align}
    \sup_{\delta\in[a,b]}\left\|x_1^{i,\delta,v_0,v_1}\right\|_{\mathcal{S}^2(0,t)}^2\leq &C(L,T)\bigg[1+\left\|\xi_0\right\|^2_{\mathcal{S}^2(-b,0)}+\left\|\xi_1\right\|_2^2+\|v_0\|_{\mathcal{L}^2(0,t)}^2+\sup_{\delta\in[a,b]}\|v_1^{i,\delta}\|_{\mathcal{L}^2(0,t)}^2\notag \\
    &\qquad\qquad \qquad\qquad\qquad\qquad\qquad+ \int_0^t\sup_{\delta\in[a,b]} \left\|x_1^{i,\delta,v_0}\right\|_{\mathcal{S}^2(0,s)}^2\md s \bigg] ,\label{C.1._proof_05}
\end{align}
and particularly, when $v_1^{i, \delta_i}(\cdot)=u_1^{i, \delta_i}[v_0](\cdot)$,
\begin{align*}
\sup_{\delta\in[a,b]}\left\|x_1^{i,\delta,v_0}\right\|_{\mathcal{S}^2(0,t)}^2\leq &C(L,T)\bigg[1+\left\|\xi_0\right\|^2_{\mathcal{S}^2(-b,0)}+\left\|\xi_1\right\|_2^2+\|v_0\|_{\mathcal{L}^2}^2+\sup_{\delta\in[a,b]}\|u_1^{i,\delta}[v_0]\|_{\mathcal{L}^2}^2\notag \\
    &\qquad\qquad \qquad\qquad\qquad\qquad\qquad+ \int_0^t\sup_{\delta\in[a,b]} \left\|x_1^{i,\delta,v_0}\right\|_{\mathcal{S}^2(0,s)}^2\md s \bigg].
\end{align*}
Applying Gr\"onwall's inequality to the last inequality, we obtain \eqref{C.1.hat.x_1}.
Substituting \eqref{C.1.hat.x_1} into \eqref{C.1._proof_03} and \eqref{C.1._proof_05}, we obtain \eqref{C.1.-1'} and \eqref{C.1.-1''}.

We now prove \eqref{C.1.-2'}. Similar as in \eqref{C.1._proof_01} and \eqref{C.1._proof_02}, we have the following estimates on $y_0^{{\mathbf{v}^i}[v_0]}$, $y_1^{i,\delta_i,{\mathbf{v}^i}[v_0]}$ and $y_1^{j,\delta_j,{\mathbf{v}^i}[v_0]}$:
\begin{align}
\left\|y_0^{{\mathbf{v}^i}[v_0]}\right\|_{\mathcal{S}^2(0,t)}^2\leq &C(L,T)\left[1+\left\|\xi_0\right\|^2_{\mathcal{S}^2(-b,0)}+\|v_0\|_{\mathcal{L}^2(0,t)}^2+\bigg\|\mathcal{M}_2\bigg(\text{\footnotesize$\frac 1N {\sum_{k=1}^N \delta_{y_1^{k, \Delta_k,{\mathbf{v}^i}[v_0]}}}$}\bigg)\bigg\|_{\mathcal{L}^2(0,t)}^2\right];\label{C.1._proof_1}\\
 \left\|y_1^{i,\delta_i,{\mathbf{v}^i}[v_0]}\right\|_{\mathcal{S}^2(0,t)}^2\leq &C(L,T)\bigg[1+\left\|\xi_1^i\right\|^2_2
    +\left\|v_1^{i,\delta_i}\right\|_{\mathcal{L}^2(0,t)}+\left\|y_0^{{\mathbf{v}^i}[v_0]}\right\|_{\mathcal{L}^2(-b,t)}\nonumber\\&\quad\qquad\qquad\qquad\qquad\qquad\qquad+\bigg\|\mathcal{M}_2\bigg(\text{\footnotesize$\frac {1}{N-1} {\sum_{k\neq i} \delta_{y_1^{k, \Delta_k,{\mathbf{v}^i}[v_0]}}}$}\bigg)\bigg\|_{\mathcal{L}^2(0,t)}^2\bigg];\label{C.1._proof_3}\\
    \left\|y_1^{j,\delta_j,{\mathbf{v}^i}[v_0]}\right\|_{\mathcal{S}^2(0,t)}^2\leq &C(L,T)\bigg[1+\left\|\xi_1^j\right\|^2_2
    +\left\|u_1^{j,\delta_j}[v_0]\right\|_{\mathcal{L}^2(0,t)}+\left\|y_0^{{\mathbf{v}^i}[v_0]}\right\|_{\mathcal{L}^2(-b,t)}\nonumber\\&\quad\qquad\qquad\qquad\qquad\qquad\qquad+\bigg\|\mathcal{M}_2\bigg(\text{\footnotesize$\frac {1}{N-1} {\sum_{k\neq j} \delta_{y_1^{k, \Delta_k,{\mathbf{v}^i}[v_0]}}}$}\bigg)\bigg\|_{\mathcal{L}^2(0,t)}^2\bigg].\label{C.1._proof_2}
\end{align}
Note the fact that $\left\{y_1^{k, \Delta_k,{\mathbf{v}^i}[v_0]}(s),\ 1\le k\le N,k\neq i \right\}$ are identically distributed, we can deduce that
\begin{align*}
    \e\bigg[\mathcal{M}^2_2\bigg(\text{\footnotesize$\frac{1}{N} {\sum_{k=1}^N \delta_{y_1^{k, \Delta_k,{\mathbf{v}^i}[v_0]}(s)}}$}\bigg) \bigg] =\ & \e\bigg[ \int_{\RR^{n_1}} |y|^2  \text{\footnotesize$\frac{1}{N} {\sum_{k\neq i} \delta_{y_1^{k, \Delta_k,{\mathbf{v}^i}[v_0]}(s)}(\md y) }$}\bigg]+\e\bigg[ \int_{\RR^{n_1}} |y|^2 \text{\footnotesize$ \frac{1}{N}  \delta_{y_1^{i, \Delta_i,{\mathbf{v}^i}[v_0]}(s)}(\md y) $}\bigg]\\\
    =&\frac{N-1}{N} \e\left[ \int_{\RR^{n_1}} |y|^2    \delta_{y_1^{j, \Delta_j,{\mathbf{v}^i}[v_0]}(s)}(\md y)  \right]+\frac{1}{N} \e\left[ \int_{\RR^{n_1}} |y|^2    \delta_{y_1^{i, \Delta_i,{\mathbf{v}^i}[v_0]}(s)}(\md y)  \right]\\
    =&\frac{N-1}{N} \e \left[ \left|y_1^{j, \Delta_j,{\mathbf{v}^i}[v_0]}(s)\right|^2 \right]+\frac{1}{N} \e \left[ \left|y_1^{i, \Delta_i,{\mathbf{v}^i}[v_0]}(s)\right|^2 \right]\\
     \le& \frac{N-1}{N}\sup_{\delta\in[a,b]}  \left\|y_1^{j,\delta,{\mathbf{v}^i}[v_0]}\right\|_{\mathcal{S}^2(0,s)}^2+\frac{1}{N}\sup_{\delta\in[a,b]}  \left\|y_1^{i,\delta,{\mathbf{v}^i}[v_0]}\right\|_{\mathcal{S}^2(0,s)}^2,
\end{align*}
and similarly, 
\begin{align*}
   &  \e\bigg[\mathcal{M}^2_2\bigg(\text{\footnotesize$\frac{1}{N-1} {\sum_{k\neq i} \delta_{y_1^{k, \Delta_k,{\mathbf{v}^i}[v_0]}(s)}}$}\bigg) \bigg]\le \sup_{\delta\in[a,b]}  \left\|y_1^{j,\delta,{\mathbf{v}^i}[v_0]}\right\|_{\mathcal{S}^2(0,s)}^2,\\
    & \e\bigg[\mathcal{M}^2_2\bigg(\text{\footnotesize$\frac{1}{N-1} {\sum_{k\neq j} \delta_{y_1^{k, \Delta_k,{\mathbf{v}^i}[v_0]}(s)}}$}\bigg) \bigg]\le \frac{N-2}{N-1}\sup_{\delta\in[a,b]}  \left\|y_1^{j,\delta,{\mathbf{v}^i}[v_0]}\right\|_{\mathcal{S}^2(0,s)}^2+\frac{1}{N-1} \sup_{\delta\in[a,b]}  \left\|y_1^{i,\delta,{\mathbf{v}^i}[v_0]}\right\|_{\mathcal{S}^2(0,s)}^2.
\end{align*}
Substituting the last three estimates into \eqref{C.1._proof_1}, \eqref{C.1._proof_2} and \eqref{C.1._proof_3}, we have
\begin{align}
\left\|y_0^{{\mathbf{v}^i}[v_0]}\right\|_{\mathcal{S}^2(0,t)}^2\leq &C(L,T)\bigg[1+\left\|\xi_0\right\|^2_{\mathcal{S}^2(-b,0)}+\|v_0\|_{\mathcal{L}^2(0,t)}^2\nonumber\\
&+\frac{N-1}{N}\int_0^t\sup_{\delta\in[a,b]}  \left\|y_1^{j,\delta,{\mathbf{v}^i}[v_0]}\right\|_{\mathcal{S}^2(0,s)}^2\md s+\frac{1}{N}\int_0^t\sup_{\delta\in[a,b]}  \left\|y_1^{i,\delta,{\mathbf{v}^i}[v_0]}\right\|_{\mathcal{S}^2(0,s)}^2\md s
\bigg];\label{eq:proof:y0}\\
 \left\|y_1^{i,\delta_i,{\mathbf{v}^i}[v_0]}\right\|_{\mathcal{S}^2(0,t)}^2\leq &C(L,T)\bigg[1+\left\|\xi_1^i\right\|^2_2
    +\left\|v_1^{i,\delta_i}\right\|_{\mathcal{L}^2(0,t)}+\left\|y_0^{{\mathbf{v}^i}[v_0]}\right\|_{\mathcal{L}^2(-b,t)}+\int_0^t\sup_{\delta\in[a,b]}  \left\|y_1^{j,\delta,{\mathbf{v}^i}[v_0]}\right\|_{\mathcal{S}^2(0,s)}^2\md s\bigg];\label{eq:proof:yi}\\
    \left\|y_1^{j,\delta_j,{\mathbf{v}^i}[v_0]}\right\|_{\mathcal{S}^2(0,t)}^2\leq& C(L,T)\bigg[1+\left\|\xi_1^j\right\|^2_2
    +\left\|u_1^{j,\delta_j}[v_0]\right\|_{\mathcal{L}^2(0,t)}+\left\|y_0^{{\mathbf{v}^i}[v_0]}\right\|_{\mathcal{L}^2(-b,t)}\nonumber\\&+\frac{N-2}{N-1}\int_0^t\sup_{\delta\in[a,b]}  \left\|y_1^{j,\delta,{\mathbf{v}^i}[v_0]}\right\|_{\mathcal{S}^2(0,s)}^2\md s+\frac{1}{N-1}\int_0^t\sup_{\delta\in[a,b]}  \left\|y_1^{i,\delta,{\mathbf{v}^i}[v_0]}\right\|_{\mathcal{S}^2(0,s)}^2\md s\bigg].\label{eq:proof:yj}
\end{align}
 Summing up \eqref{eq:proof:y0} and \eqref{eq:proof:yj}, we get
 \begin{align*}
   \left\|y_0^{{\mathbf{v}^i}[v_0]}\right\|_{\mathcal{S}^2(0,t)}^2&+  \sup_{\delta\in[a,b]} \left\|y_1^{j,\delta,{\mathbf{v}^i}[v_0]}\right\|_{\mathcal{S}^2(0,t)}^2\leq C(L,T)\bigg[1
    +\left\|\xi_0\right\|^2_{\mathcal{S}^2(-b,0)}+\left\|\xi_1^j\right\|^2_2+\|v_0\|_{\mathcal{L}^2}^2+\sup_{\delta\in[a,b]}\left\|u_1^{j,\delta}[v_0]\right\|_{\mathcal{L}^2}\nonumber\\&+\left\|y_0^{{\mathbf{v}^i}[v_0]}\right\|_{\mathcal{L}^2(-b,t)}+\frac{N-1}{N}\int_0^t\sup_{\delta\in[a,b]}  \left\|y_1^{j,\delta,{\mathbf{v}^i}[v_0]}\right\|_{\mathcal{S}^2(0,s)}^2\md s+\frac{1}{N-1}\int_0^t\sup_{\delta\in[a,b]}  \left\|y_1^{i,\delta,{\mathbf{v}^i}[v_0]}\right\|_{\mathcal{S}^2(0,s)}^2\md s\bigg].
 \end{align*}
 Substituting \eqref{eq:proof:yi} into the last estimate and applying Gr\"onwall's inequality, we obtain \eqref{C.1.-2'}. Substituting \eqref{C.1.-2'} into \eqref{eq:proof:yi}, we get \eqref{C.1.-2''} 
 The proof for \eqref{estimate:SDE:Lq-1} is similar as that for \eqref{C.1.-1'} and \eqref{C.1.hat.x_1}, which is omitted here. 

\subsection{Proof of Lemma~\ref{prop:holdercontinuity}}\label{proof:prop:holder}
    From the SDE of the process $x_0^{v_0}(\cdot)$, we know that for $0\le t\le s\le T$,
\begin{align*}
    x_0^{v_0}(s)-x_0^{v_0}(t)= \int _t^s g_0 \left(x_0^{v_0}(r), z[v_0](r), v_0(r)\right) \md t + \int_t^s  \sigma_0 \left(x_0^{v_0}(r), z[v_0](r), v_0(r)\right) \md W_0(r).
\end{align*}
Then from the Cauchy–Schwarz inequality and Assumption~\ref{assump:lip}, we have
\begin{align}
    &\left\| x_0^{v_0}(s)-x_0^{v_0}(t) \right\|_2^2  \notag \\
    \le\ & 2 (s-t)\e\left[\int_t^s \left| g_0 \left(x_0^{v_0}(r), z[v_0](r), v_0(r)\right)\right|^2  \md r\right] + 2 \e\left[\int_t^s \left| \sigma_0 \left(x_0^{v_0}(r), z[v_0](r), v_0(r)\right)\right|^2  \md r\right] \notag \\
    \le\ &  C(L,T)(s-t)\bigg(1+\sup_{r\in[0,T]} \left\|x_0^{v_0}(r)\right\|_2^2 + \sup_{r\in[0,T]} \left\|\mathcal{M}_2(z[v_0](r))\right\|_2^2 \bigg) + C(L,T) \|v_0\|_{\mathcal{L}^2(t,s)}^2. \label{C.2._proof_1}
\end{align}
From \eqref{eq:mz}, we see that 
\begin{align}
    \sup_{r\in[0,T]} \left\|\mathcal{M}_2(z[v_0](r))\right\|_2^2  
    \le \sup_{\delta\in[a,b]} \sup_{r\in[0,T]}  \left\|x_1^{i,\delta,v_0}(r)\right\|_2^2. \notag
\end{align}
Substituting the last estimate into \eqref{C.2._proof_1}, and using \eqref{C.1.-1'} and \eqref{C.1.hat.x_1}, we have
\begin{align}
    &\left\| x_0^{v_0}(s)-x_0^{v_0}(t) \right\|_2^2 \notag 
    \\
    \le\ & C(L,T)\bigg[ 1+ \left\|\xi_0\right\|^2_{\sr^2(-b,0)} + \left\|\xi^i_1\right\|_2^2 +  \|v_0\|_{\lr^2}^2 + \sup_{\delta\in[a,b]} \left\|u_1^{i, \delta}[v_0]\right\|_{\lr^2}^2\bigg](s-t)+C(L,T) \|v_0\|_{\mathcal{L}^2(t,s)}^2 \notag 
    \\
    \le\ &C(L,T,\xi_0,\xi^i_1)\left(1+\|v_0\|_{\lr^2}^2 + \sup_{\delta\in[a,b]} \left\|u_1^{i, \delta}[v_0]\right\|_{\lr^2}^2\bigg]\right)(s-t)+ C(L,T) \|v_0\|_{\lr^q}^2(s-t)^{\frac{q-2}{q}} \notag \\
    \le&C(L,T,\xi_0,\xi^i_1,q)\bigg(1+\|v_0\|_{\lr^q}^2+\sup_{\delta\in[a,b]} \left\|u_1^{i, \delta}[v_0]\right\|_{\lr^2}^2\bigg)(s-t)^{\frac{q-2}{q}},
    \label{C.2._proof_2}
\end{align}
where the second inequality uses the H\"older inequality to  get $\|v_0\|_{\mathcal{L}^2(t,s)}^2\leq \|v_0\|_{\lr^q}^2(s-t)^{\frac{q-2}{q}}$. Combined with \eqref{assum:xi_0_conti}, we know that \eqref{C.2._proof_2} holds for any $s,t\in[-b,T]$, possibly with a larger constant $C(L,T,\xi_0,\xi^i_1,q)$.

Next, we suppose that $a\le \delta\le \gamma\le b$. From the SDE  of $x_1^{i,\delta_i,v_0}(\cdot)$ and Assumption~\ref{assump:lip}, we know that 
\begin{align}
    &\left\|x_1^{i,\delta,v_0}(s)-x_1^{i,\gamma,v_0}(s)\right\|^2_2 \notag \\
    \le\ & C(L,T)
   \int_0^s \left(\left\|x_1^{i,\delta,v_0}(t)-x_1^{i,\gamma,v_0}(t)\right\|^2_2+\left\|u_1^{i,\delta}[v_0](t)-u_1^{i,\gamma}[v_0](t)\right\|^2_2+ \left\|x_0^{v_0}\left(t-\delta\right) - x_0^{v_0}\left(t-\gamma\right)\right\|^2_2 \right) \md t \notag \\
    \le \ & C(L,T) \int_0^t\left\|x_1^{i,\delta,v_0}(t)-x_1^{i,\gamma,v_0}(t)\right\|^2_2\md s+C(L,T)l_{u_1[v_0]}|\delta-\gamma|^{\frac{q-2}{q}}\nonumber\\
    &\qquad\qquad+C(L,T,\xi_0,\xi^i_1,q)\bigg(1+\|v_0\|_{\lr^q}^2+\sup_{\delta\in[a,b]} \left\|u_1^{i, \delta}[v_0]\right\|_{\lr^2}^2\bigg)(\delta-\gamma)^{\frac{q-2}{q}},\nonumber
\end{align}
where the last inequality uses \eqref{C.2._proof_2}  and  Hypothesis \ref{assumption:integral}-(ii). Then, by applying Gr\"onwall's inequality, we obtain 
   \begin{align*}
\left\|x_1^{i,\delta,v_0}(s)-x_1^{i,\gamma,v_0}(s)\right\|^2_2 \leq & C(L,T)l_{u_1[v_0]}(\gamma-\delta)^{\frac{q-2}{q}} \\ &+C(L,T,\xi_0,\xi^i_1,q)\bigg(1+\|v_0\|_{\lr^q}^2+\sup_{\delta\in[a,b]} \left\|u_1^{i, \delta}[v_0]\right\|_{\lr^2}^2\bigg)(\delta-\gamma)^{\frac{q-2}{q}},
\end{align*}
thus \eqref{C.2.} holds.

\section{Proof of Proposition~\ref{prop:example}}\label{pf:prop:example}

We first give estimate of the norm of $y_1^{i,\delta_i,\mathbf{u}[v_0]}(t)- x_1^{i,\delta_i,v_0}(t)$. Recall the SDE for $x_1^{i,\delta_i,v_0}(\cdot)$, we can write that
\begin{align*}
    x_1^{i,\delta_i,v_0}(t) =\ &  \xi_1^i+\int_0^t \bigg[\int_{\RR^{n_1}}\overline{g}^0_1 \left(y\right)z[v_0](s)(\md y) + \overline{g}^1_1 \left(x_1^{i,\delta_i,v_0}(s),u^{i,\delta_i}_1[v_0](s),x_0^{v_0}(s-\delta_i)\right) \bigg] \md s \\
    &+ \int_0^t \bigg[\int_{\RR^{n_1}}\overline{\sigma}^0_1 \left(y\right)z[v_0](s)(\md y) + \overline{\sigma}^1_1 \left(x_1^{i,\delta_i,v_0}(s),u^{i,\delta_i}_1[v_0](s),x_0^{v_0}(s-\delta_i)\right) \bigg] \md W_1^i (s)\\
    =\ &  \xi_1^i+\int_0^t \bigg[\int_{\RR^{n_1}}\overline{g}^0_1 \left(y\right)z[v_0](s)(\md y)  -\text{\footnotesize$\frac{1}{N-1} \sum_{j\neq i}$} \overline{g}^0_1 \left(x_1^{j,\Delta_j,v_0}(s)\right) \bigg] \md s \\
    &+\int_0^t \bigg[\text{\footnotesize$\frac{1}{N-1} \sum_{j\neq i}$} \overline{g}^0_1 \left(x_1^{j,\Delta_j,v_0}(s)\right) + \overline{g}^1_1 \left(x_1^{i,\delta_i,v_0}(s),u^{i,\delta_i}_1[v_0](s),x_0^{v_0}(s-\delta_i)\right)\bigg] \md s \\
    &+ \int_0^t \bigg[\int_{\RR^{n_1}}\overline{\sigma}^0_1 \left(y\right)z[v_0](s)(\md y)  -\text{\footnotesize$\frac{1}{N-1} \sum_{j\neq i}$} \overline{\sigma}^0_1 \left(x_1^{j,\Delta_j,v_0}(s)\right) \bigg] \md W_1^i (s)\\
    &+ \int_0^t \bigg[\text{\footnotesize$\frac{1}{N-1} \sum_{j\neq i}$} \overline{\sigma}^0_1 \left(x_1^{j,\Delta_j,v_0}(s)\right) + \overline{\sigma}^1_1 \left(x_1^{i,\delta_i,v_0}(s),u^{i,\delta_i}_1[v_0](s),x_0^{v_0}(s-\delta_i)\right) \bigg] \md W_1^i (s).
\end{align*}
Then, by using SDE \eqref{eq:y_1:4.3} for $y_1^{i,\delta_i,\mathbf{u}[v_0]}(t)$, we know that $y_1^{i,\delta_i,\mathbf{u}[v_0]}(t)- x_1^{i,\delta_i,v_0}(t)$ satisfies the following equation: 
\begin{align}
    &y_1^{i,\delta_i,\mathbf{u}[v_0]}(t)- x_1^{i,\delta_i,v_0}(t)\nonumber\\
    =\ & \frac{1}{N-1}\sum_{j\neq i} \int_0^t \left[ \overline{g}^0_1 \left(y_1^{j,\Delta_j,\mathbf{u}[v_0]}(s)\right) - \overline{g}^0_1 \left(x_1^{j,\Delta_j,v_0}(s)\right) \right] \md s \notag \\
    &+\int_0^t \left[\overline{g}^1_1\bigg(y_1^{i,\delta_i,\mathbf{u}[v_0]}(s), u_1^{i, \delta_i}[v_0](s), y_0^{\mathbf{u}[v_0]}\left(s-\delta_i\right)\bigg) - \overline{g}^1_1 \left(x_1^{i,\delta_i,v_0}(s),u^{i,\delta_i}_1[v_0](s),x_0^{v_0}(s-\delta_i)\right) \right]\md s \notag\\
    & - \int_0^t \bigg[\int_{\RR^{n_1}}\overline{g}^0_1 \left(y\right)z[v_0](s)(\md y)  -\text{\footnotesize$\frac{1}{N-1} \sum_{j\neq i}$} \overline{g}^0_1 \left(x_1^{j,\Delta_j,v_0}(s)\right) \bigg] \md s \notag\\
    &+\frac{1}{N-1}\sum_{j\neq i} \int_0^t \left[ \overline{\sigma}^0_1 \left(y_1^{j,\Delta_j,\mathbf{u}[v_0]}(s)\right) - \overline{\sigma}^0_1 \left(x_1^{j,\Delta_j,v_0}(s)\right) \right] \md W_1^i (s) \notag \\
    &+\int_0^t \left[\overline{\sigma}^1_1\bigg(y_1^{i,\delta_i,\mathbf{u}[v_0]}(s), u_1^{i, \delta_i}[v_0](s), y_0^{\mathbf{u}[v_0]}\left(s-\delta_i\right)\bigg) - \overline{\sigma}^1_1 \left(x_1^{i,\delta_i,v_0}(s),u^{i,\delta_i}_1[v_0](s),x_0^{v_0}(s-\delta_i)\right) \right]\md W_1^i (s) \notag\\
    & - \int_0^t \bigg[\int_{\RR^{n_1}}\overline{\sigma}^0_1 \left(y\right)z[v_0](s)(\md y)  -\text{\footnotesize$\frac{1}{N-1} \sum_{j\neq i}$} \overline{\sigma}^0_1 \left(x_1^{j,\Delta_j,v_0}(s)\right) \bigg] \md W_1^i (s). \label{SDE:y-x}
\end{align}
From the formulation of $z[v_0](\cdot)$ in \eqref{def:z'}, we see that 
\begin{align}
    &\int_{\RR^{n_1}}\overline{g}^0_1 \left(y\right)z[v_0](s)(\md y) - \text{\footnotesize$\frac{1}{N-1} \sum_{j\neq i} $}\overline{g}^0_1 \left(x_1^{j,\Delta_j,v_0}(s)\right) \notag \\
    =\ & \frac{1}{N-1} \sum_{j\neq i} \bigg[\int_\brd \overline{g}^0_1 \left(y\right)z[v_0](s)(\md y) - \overline{g}^0_1 \left(x_1^{j,\Delta_j,v_0}(s)\right)\bigg] \notag \\
    =\ & \frac{1}{N-1} \sum_{j\neq i} \bigg[\int_\brd \overline{g}^0_1 \left(y\right) \int_{[a,b]} \mathbb{P}^{\F^0_{s-\delta_j}\vee \F_s^z }_{ x_1^{j,\delta_j,v_0}(s)} \md\pi_{\de}(\delta_j) (\md y) - \overline{g}^0_1 \left(x_1^{j,\Delta_j,v_0}(s)\right)\bigg] \notag \\
    =\ & \frac{1}{N-1} \sum_{j\neq i} \bigg[\int_{[a,b]}\int_\brd \overline{g}^0_1 \left(y\right)  \mathbb{P}^{\F^0_{s-\delta_j}\vee \F_s^z }_{ x_1^{j,\delta_j,v_0}(s)} (\md y) \md\pi_{\de}(\delta_j) - \overline{g}^0_1 \left(x_1^{j,\Delta_j,v_0}(s)\right)\bigg] \notag \\
    =\ & \frac{1}{N-1} \sum_{j\neq i} \bigg[\int_{[a,b]} \e\left[ \overline{g}^0_1 \left(x_1^{j,\delta_j,v_0}(s)\right) \Big| \F^0_{s-\delta_j}\vee \F_s^z \right] \md\pi_{\de}(\delta_j)  - \overline{g}^0_1 \left(x_1^{j,\Delta_j,v_0}(s)\right)\bigg]. \notag
\end{align}
We temporarily denote by 
\begin{align}\label{def:eta}
    \eta^{g,j}_s:=\ & \int_{[a,b]} \e\left[ \overline{g}^0_1 \left(x_1^{j,\delta_j,v_0}(s)\right) \Big| \F^0_{s-\delta_j}\vee \F_s^z \right] d\pi_{\de}(\delta_j)  - \overline{g}^0_1 \left(s,x_1^{j,\Delta_j,v_0}(s)\right).
\end{align}
Then, it is obvious that
\begin{align}\label{eta^j}
    \e\left[\eta^{g,j}_s\right]=0,\quad s\in[0,T],\quad j=1,\dots,N.
\end{align}
Since $x_1^{j,\delta_j,v_0}(s)$ is $\F^0_{s-\delta_j}\vee \F_s^z\vee \F^{1,j}_s$-adapted, we can write
\begin{align}\label{add-8}
    \overline{g}^0_1 \left(x_1^{j,\delta_j,v_0}(s)\right)=
    \e\left[\overline{g}^0_1 \left(x_1^{j,\delta_j,v_0}(s)\right) \Big| \F^0_{s-\delta_j}\vee \F_s^z\vee \F^{1,j}_s \right] = \e\left[\overline{g}^0_1 \left(x_1^{j,\delta_j,v_0}(s)\right) \Big| \F^0_{s}\vee \F_s^z\vee \F^{1,j}_s \right],
\end{align}
then, similar to \eqref{eq:pro:conditional}, by using the fact that $\mathcal{F}^{1,j}$ and $\mathcal{F}^0\vee \mathcal{F}^z$ are independent of each other, and  by using \eqref{eq:property:X},
we have
\begin{align*}
    &\e\left[ \overline{g}^0_1 \left(x_1^{j,\delta_j,v_0}(s)\right) \Big| \F^0_{s-\delta_j}\vee \F_s^z \right]=\e\left[ \overline{g}^0_1 \left(x_1^{j,\delta_j,v_0}(s)\right) \Big| \F^0_s\vee \F_s^z \right].
\end{align*}
As a consequence, together with the independence of $\{\F^{1,j},\ j=1,\dots,N\}$ of different players and \eqref{eta^j},
\begin{align*}
    \e\left[\eta^{g,j}_s\cdot \eta^{g,j'}_s\right]=0,\quad j\neq j',\quad j=1,\dots,N;
\end{align*}
also see \cite[Pages 175-176]{Sznitman} for details about the similar approach. Then, we have
\begin{align}
    \e\left[\bigg|\frac{1}{N-1} \sum_{j\neq i} \eta^{g,j}_s\bigg|^2\right]=\ & \frac{1}{(N-1)^2} \e\left[\sum_{j\neq i} \left|\eta^{g,j}_s\right|^2+ 2 \sum_{j'\neq j\neq i}  \eta^{g,j}_s\cdot \eta^{g,j'}_s \right] \notag\\
    =\ & \frac{1}{(N-1)^2} \e\left[\sum_{j\neq i} \left|\eta^{g,j}_s\right|^2 \right] = \frac{1}{N-1} \e\left[\left|\eta^{g,i}_s\right|^2 \right]. \label{eta^j&eta^j'}
\end{align}
Similarly, we denote by 
\begin{align}\label{def:eta'}
    \eta^{\sigma,j}_s:=\ & \int_{[a,b]} \e\left[ \overline{\sigma}^0_1 \left(x_1^{j,\delta_j,v_0}(s)\right) \Big| \F^0_{s-\delta_j}\vee \F_s^z \right] d\pi_{\de}(\delta_j)  - \overline{\sigma}^0_1 \left(s,x_1^{j,\Delta_j,v_0}(s)\right),
\end{align}
then, the last term on the right hand side of \eqref{SDE:y-x} also writes
\begin{align*}
    \int_0^t \bigg[\int_{\RR^{n_1}}\overline{\sigma}^0_1 \left(y\right)z[v_0](s)(\md y)  -\frac{1}{N-1} \sum_{j\neq i} \overline{\sigma}^0_1 \left(x_1^{j,\Delta_j,v_0}(s)\right) \bigg] \md W_1^i (s) = \int_0^t  \frac{1}{N-1} \sum_{j\neq i} \eta^{\sigma,j}_s \md W_1^i (s);
\end{align*}
and $\left\{\eta^{\sigma,j}_s,\ j=1,\dots,N\right\}$ satisfies
\begin{align}
    \e\left[\bigg|\frac{1}{N-1} \sum_{j\neq i} \eta^{\sigma,j}_s\bigg|^2\right]=\ & \frac{1}{N-1} \e\left[\left|\eta^{\sigma,i}_s\right|^2 \right]. \label{eta'^j&eta^j'}
\end{align} 
Now, by substituting \eqref{def:eta} and \eqref{def:eta'} into \eqref{SDE:y-x}, we get
\begin{align}
    &y_1^{i,\delta_i,\mathbf{u}[v_0]}(t)- x_1^{i,\delta_i,v_0}(t) \notag \\
    =\ & \int_0^t \frac{1}{N-1}\sum_{j\neq i} \left[ \overline{g}^0_1 \left(y_1^{j,\Delta_j,\mathbf{u}[v_0]}(s)\right) - \overline{g}^0_1 \left(x_1^{j,\Delta_j,v_0}(s)\right) \right] \md s - \int_0^t \frac{1}{N-1} \sum_{j\neq i} \eta^{g,j}_s \md s \notag \\
    &+\int_0^t \left[\overline{g}^1_1\bigg(y_1^{i,\delta_i,\mathbf{u}[v_0]}(s), u_1^{i, \delta_i}[v_0](s), y_0^{\mathbf{u}[v_0]}\left(s-\delta_i\right)\bigg) - \overline{g}^1_1 \left(x_1^{i,\delta_i,v_0}(s),u^{i,\delta_i}_1[v_0](s),x_0^{v_0}(s-\delta_i)\right) \right]\md s \notag\\
    &+ \int_0^t \frac{1}{N-1}\sum_{j\neq i} \left[ \overline{\sigma}^0_1 \left(y_1^{j,\Delta_j,\mathbf{u}[v_0]}(s)\right) - \overline{\sigma}^0_1 \left(x_1^{j,\Delta_j,v_0}(s)\right) \right] \md W_1^i (s) - \int_0^t \frac{1}{N-1} \sum_{j\neq i} \eta^{\sigma,j}_s \md W_1^i (s) \notag \\
    &+\int_0^t \left[\overline{\sigma}^1_1\bigg(y_1^{i,\delta_i,\mathbf{u}[v_0]}(s), u_1^{i, \delta_i}[v_0](s), y_0^{\mathbf{u}[v_0]}\left(s-\delta_i\right)\bigg) - \overline{\sigma}^1_1 \left(x_1^{i,\delta_i,v_0}(s),u^{i,\delta_i}_1[v_0](s),x_0^{v_0}(s-\delta_i)\right) \right]\md W_1^i (s) . \label{SDE:y-x_1}
\end{align}
From Assumption~\ref{assumption:exp}, applying standard arguments for SDEs, we have the following estimate for SDE \eqref{SDE:y-x_1}:
\begin{align}
    &\e\left[\sup_{t\in[0,T]} \left|y_1^{i,\delta_i,\mathbf{u}[v_0]}(t)- x_1^{i,\delta_i,v_0}(t)\right|^2 \right] \notag \\
    \le\ &  C(L,T)\e\left[\int_0^T \bigg(\frac{1}{N-1}\sum_{j\neq i} \left|y_1^{j,\Delta_j,\mathbf{u}[v_0]}(t)- x_1^{j,\Delta_j,v_0}(t)\right|\bigg)^2 \md t + \int_0^T \left|y_0^{\mathbf{u}[v_0]}(t-\delta_i)-x_0^{v_0}(t-\delta_i)\right|^2 \md t \right] \notag \\
    &+C(L,T)\int_0^T \e\left[\bigg|\frac{1}{N-1} \sum_{j\neq i} \eta^{g,j}_t\bigg|^2  + \bigg|\frac{1}{N-1} \sum_{j\neq i} \eta^{\sigma,j}_t\bigg|^2 \right] \md t. \label{exp:thm_1}
\end{align}
Using the fact that $\left\{y_1^{j,\Delta_j,\mathbf{u}[v_0]}(t)- x_1^{j,\Delta_j,v_0}(t),\ j=1,\dots,N \right\}$ are identically distributed, by symmetry or following a similar approach as that leading to \eqref{eq:w2:conv1}, we have
\begin{align*}
    &\e\left[\int_0^T \bigg(\frac{1}{N-1}\sum_{j\neq i} \left|y_1^{j,\Delta_j,\mathbf{u}[v_0]}(t)- x_1^{j,\Delta_j,v_0}(t)\right|\bigg)^2 \md t\right]
    \le \int_0^T\sup_{\delta\in[a,b]}\e\left[\left|y^{i,\delta,\mathbf{u}[v_0]}_1(t)- x^{i,\delta,v_0}_1(t)\right|^2 \right] \md t.
\end{align*}
Substituting this last inequality back into \eqref{exp:thm_1} and taking the supremum in $\delta\in[a,b]$, we can write 
\begin{align*}
    \sup_{\delta\in[a,b]}\e\left[\sup_{t\in[0,T]} \left|y^{i,\delta,\mathbf{u}[v_0]}_1(t)- x^{i,\delta,v_0}_1(t)\right|^2 \right]\le\ &  C(L,T)\int_0^T\sup_{\delta\in[a,b]}\e\left[\left|y^{i,\delta,\mathbf{u}[v_0]}_1(t)- x^{i,\delta,v_0}_1(t)\right|^2 \right] \md t \notag \\
    &+C(L,T)\sup_{\delta\in[a,b]}\e\left[ \int_0^T \left|y_0^{\mathbf{u}[v_0]}(t-\delta)-x_0^{v_0}(t-\delta)\right|^2 \md t \right] \notag \\
    &+C(L,T)\int_0^T \e\left[\bigg|\frac{1}{N-1} \sum_{j\neq i} \eta^{g,j}_t\bigg|^2 + \bigg|\frac{1}{N-1} \sum_{j\neq i} \eta^{\sigma,j}_t\bigg|^2 \right] \md t.
\end{align*}
Then, by using Gr\"onwall's inequality, we have
\begin{align*}
    \sup_{\delta\in[a,b]}\e\left[\sup_{t\in[0,T]} \left|y^{i,\delta,\mathbf{u}[v_0]}_1(t)- x^{i,\delta,v_0}_1(t)\right|^2 \right]\le\ &C(L,T) \e\left[ \int_{-b}^{T-a} \left|y_0^{\mathbf{u}[v_0]}(t)-x_0^{v_0}(t)\right|^2 \md t \right] \notag \\
    &+C(L,T)\int_0^T \e\left[\bigg|\frac{1}{N-1} \sum_{j\neq i} \eta^{g,j}_t\bigg|^2 + \bigg|\frac{1}{N-1} \sum_{j\neq i} \eta^{\sigma,j}_t\bigg|^2\right] \md t.
\end{align*}
Substituting \eqref{eta^j&eta^j'} and \eqref{eta'^j&eta^j'}  into the last inequality, we have
\begin{align}
    \sup_{\delta\in[a,b]}\e\left[\sup_{t\in[0,T]} \left|y^{i,\delta,\mathbf{u}[v_0]}_1(t)- x_1^{i,\delta_i,v_0}(t)\right|^2 \right]\le\ &  C(L,T)\e\left[ \int_{-b}^{T-a} \left|y_0^{\mathbf{u}[v_0]}(t)-x_0^{v_0}(t)\right|^2 \md t \right] \notag \\
    &+\frac{C(L,T)}{N-1}\int_0^T  \e\left[\left|\eta^{g,i}_t\right|^2  + \left|\eta^{\sigma,i}_t\right|^2 \right] \md t; \label{exp:estimate_1}
\end{align}
Now we give the boundedness of the $\mathcal{L}^2$-norm of $\eta^{g,i}_s$. From \eqref{def:eta} and Assumption~\ref{assumption:exp}, 
\begin{align*}
    \e\left[\left|\eta^{g,i}_s\right|^2 \right] =\ & 4 \e\left[\left|\overline{g}^0_1 \left(x_1^{i,\de_i,v_0}(s)\right)\right|^2 \right] \notag \\
    \le\ & 8L^2 \left(1+ \e\left[\left| x_1^{i,\de_i,v_0}(s)\right|^2 \right]\right) \\
    \le\ & 8 L^2 \left(1+ \sup_{\delta\in[a,b]} \e\left[\sup_{s\in[0,T]} \left|x_1^{i,\delta,v_0}(s)\right|^2 \right]\right);
\end{align*} 
and similarly, 
\begin{align*}
    \e\left[\left|\eta^{\sigma,i}_s\right|^2 \right] 
    \le\ & 8 L^2 \left(1+ \sup_{\delta\in[a,b]} \e\left[\sup_{s\in[0,T]} \left|x_1^{i,\delta,v_0}(s)\right|^2 \right]\right).
\end{align*} 
Substituting the last two estimates back into \eqref{exp:estimate_1}, we have
\begin{align}
    \sup_{\delta\in[a,b]}\e\left[\sup_{t\in[0,T]} \left|y^{i,\delta,\mathbf{u}[v_0]}_1(t)- x_1^{i,\delta_i,v_0}(t)\right|^2 \right]\le\ &  C(L,T)\e\left[ \int_{-b}^{T-a} \left|y_0^{\mathbf{u}[v_0]}(t)-x_0^{v_0}(t)\right|^2 \md t \right] \notag \\
    &+\frac{C(L,T)}{N-1}\left(1+ \sup_{\delta\in[a,b]} \e\left[\sup_{s\in[0,T]} \left|x_1^{i,\delta,v_0}(s)\right|^2 \right]\right). \label{exp:estimate_2}
\end{align}
Applying a similar approach to the process $y_0^{\mathbf{u}[v_0]}(\cdot)-x_0^{v_0}(\cdot)$ and the Gr\"onwall inequality, we can also obtain that
\begin{align}
    \e\left[\sup_{t\in[-b,T]} \left|y_0^{\mathbf{u}[v_0]}(t)- x_0^{v_0}(t)\right|^2 \right]\le\ &  \frac{C(L,T)}{N}\left(1+  \sup_{\delta\in[a,b]} \e\left[\sup_{s\in[0,T]} \left|x_1^{i,\delta,v_0}(s)\right|^2 \right]\right). \label{exp:estimate_3}
\end{align}
Combining \eqref{exp:estimate_2} and \eqref{exp:estimate_3} and using \eqref{C.1.hat.x_1} in Lemma \ref{prop:moment}, we have
\begin{align*}
    &\e\left[\sup_{t\in[-b,T]} \left|y_0^{\mathbf{u}[v_0]}(t)- x_0^{v_0}(t)\right|^2 \right]+\sup_{\delta\in[a,b]}\e\left[\sup_{t\in[0,T]} \left|y^{i,\delta,\mathbf{u}[v_0]}_1(t)- x_1^{i,\delta_i,v_0}(t)\right|^2 \right]\\
    \le\ & \frac{C(L,T)}{N-1}\left(1+ \sup_{\delta\in[a,b]} \e\left[\sup_{s\in[0,T]} \left|x_1^{i,\delta,v_0}(s)\right|^2 \right]\right)\\
    \le\ & \frac{C(n_1,L,T,\xi_0,\xi_1)}{N-1} \bigg(1+\|v_0\|_{\mathcal{L}^2}^2+\sup_{\delta\in[a,b]}\left\|u_1^{i,\delta}[v_0]\right\|_{\mathcal{L}^2}^2\bigg),
\end{align*}
from which we obtain \eqref{exp:thm_0}.

\footnotesize

\bibliographystyle{acm} 

\bibliography{reference}

\normalsize

\renewcommand{\theequation}{\thesection.\arabic{equation}}

\end{document}